\documentclass[10.5pt]{article}
\usepackage[affil-it]{authblk}

\usepackage[top=30truemm,bottom=30truemm,left=30truemm,right=30truemm]{geometry}

\usepackage{amssymb, amsmath}
\usepackage{amsthm}

\makeatletter
    
    \@addtoreset{equation}{section}
  \makeatother

\newtheorem{definition}{Definition}[section]
\newtheorem{proposition}[definition]{Proposition}
\newtheorem{theorem}[definition]{Theorem}
\newtheorem{lemma}[definition]{Lemma}

\newtheorem{remark}{Remark}[section]

\begin{document}

\title{%
{\bf Scaling variables and asymptotic profiles for
the semilinear damped wave equation with variable coefficients}
}%

\author{Yuta Wakasugi}
\date{\empty}
 \affil{Graduate School of Mathematics, Nagoya University,\\
 Furocho, Chikusaku, Nagoya 464-8602, Japan\\
 Email: {\rm yuta.wakasugi@math.nagoya-u.ac.jp}}

\maketitle

\begin{abstract}
We study the asymptotic behavior of solutions for the semilinear damped wave equation
with variable coefficients.
We prove that if the damping is effective, and the nonlinearity and other lower order terms
can be regarded as perturbations, then the solution is approximated by
the scaled Gaussian of the corresponding linear parabolic problem.
The proof is based on the scaling variables and energy estimates.
\end{abstract}

\section{Introduction}

We consider the Cauchy problem of the semilinear damped wave equation
with lower order perturbations
\begin{align}
\label{dw}
	\left\{ \begin{array}{ll}
	u_{tt}+b(t)u_{t}
	=\Delta_{x}u + c(t)\cdot \nabla_{x}u + d(t)u + N(u,\nabla_{x}u,u_{t}),&
	t>0,x\in\mathbb{R}^n,\\
	u(0,x)=\varepsilon u_0(x),\ u_{t}(0,x)=\varepsilon u_1(x),
	&x \in\mathbb{R}^n,
	\end{array}\right.
\end{align}
where the coefficients $b, c$ and $d$ are smooth,
$b$ satisfies
\begin{align}
\label{ef_dm}
	b(t) \sim (1+t)^{-\beta},\quad -1\le \beta<1,
\end{align}
and
$c(t)\cdot \nabla_{x}u, d(t)u, N(u,\nabla_{x}u,u_{t})$
can be regarded as perturbations
(the precise assumption will be given in the next section).
Also, $\varepsilon$ denotes a small parameter.

Our purpose is to give the asymptotic profile of global solutions to \eqref{dw} with small initial data
as time tends to infinity.
By the assumption \eqref{ef_dm}, the damping is effective, and
we can expect that the asymptotic profile of solutions is given by
the scaled Gaussian (see \eqref{def_B}, \eqref{gauss} and \eqref{sol_asym}).

The existence of global solutions and the asymptotic behavior of solutions
to damped wave equations
have been widely investigated for a long time.
Matsumura \cite{Ma76} obtained decay estimates of solutions to
the linear damped wave equation
\begin{align}
\label{linear_DW}
	u_{tt}-\Delta u + u_t = 0,
\end{align}
and applied them to nonlinear problems.
After that,
Yang and Milani \cite{YaMi00} showed that
the solution of \eqref{linear_DW} has the so-called
{\em diffusion phenomena}, that is,
the asymptotic profile of solutions to \eqref{linear_DW} is given by
the Gaussian in the $L^{\infty}$-sense.
Marcati and Nishihara \cite{MaNi03} and Nishihara \cite{Ni03MathZ}
gave more detailed informations about the asymptotic behavior of solutions.
They found that when $n=1,3$, the solution of \eqref{linear_DW}
is asymptotically decomposed
into the Gaussian and a solution of the wave equation
(with an exponentially decaying coefficient) in the $L^p$--$L^q$ sense
(see Hosono
and 
Ogawa \cite{HoOg04} and Narazaki \cite{Na04} for $n=2$ and $n\ge 4$).

For the nonlinear problem
\begin{align}
\label{nonlinear_DW}
	\left\{\begin{array}{l}
	u_{tt}-\Delta u + u_t = N(u),\\
	(u,u_t)(0,x)=\varepsilon (u_0,u_1)(x),
	\end{array}\right.
\end{align}
there are many results about global existence and asymptotic behavior of solutions
(see for example, \cite{IkNiZh06, IkTa05, Ka00, KaUe13, KaNaOn95, LiZh95, Ni06}).
In particular, Todorova and Yordanov \cite{ToYo01}
and Zhang \cite{Zh01} 
proved that
when $N(u)=|u|^p$, the critical exponent of \eqref{nonlinear_DW} is given by
$p = 1+2/n$.
More precisely, they showed that,
for initial data satisfying
$(u_0, u_1) \in H^{1,0}(\mathbb{R}^n) \times L^2(\mathbb{R}^n)$ and having
compact support,
if $p> 1+2/n$, then the global solution uniquely exists for small $\varepsilon$;
if $p \le 1+2/n$ and $\int_{\mathbb{R}^n} (u_0+u_1)(x)dx > 0$,
then the local-in-time solution blows up in finite time for any $\varepsilon > 0$.
The number $1+2/n$ is the same as
the well-known Fujita exponent,
which is the critical exponent of the semilinear heat equation
$v_t - \Delta v = v^p$ (see \cite{Fu66}),
though the role of the critical exponent is
different in the semilinear heat equation and the semilinear damped wave equation.
In fact, for the subcritical case $1<p<1+2/n$,
the solution of the semilinear damped wave equation blows up in finite time
under the positive mass condition
$\int_{\mathbb{R}^n} (u_0 + u_1)(x) dx > 0$,
while all positive solutions blow up in finite time for the semilinear heat equation.

Concerning the asymptotic behavior of the global solution, 
Hayashi, Kaikina and Naumkin \cite{HaKaNa04DIE} proved that
if $N$ satisfies $|N(u)| \le C |u|^p$ with $p>1+2/n$,
then the unique global solution exists for suitably small data and
the asymptotic profile of the solution is given by a constant multiple of the Gaussian.
However, they used the explicit formula of the fundamental solution of the
linear problem in the Fourier space, and hence,
it seems to be difficult to apply their method to variable coefficient cases.

Gallay and Raugel \cite{GaRa98} considered
the one-dimensional damped wave equation with variable principal term and a constant damping
\begin{align*}
	u_{tt} - ( a(x) u_{x} )_x + u_t = N(u,u_x,u_t).
\end{align*}
They used scaling variables
\begin{align}
\label{gr_sv}
	s=\log (t+t_0),\quad y =\frac{x}{\sqrt{t+t_0}},
\end{align}
and showed that if $a(x)$ is positive and has the positive limits $\lim_{x\to\pm\infty}a(x)=a_{\pm}$,
then the solution can be asymptotically expanded in terms of the corresponding parabolic equation.
Moreover, this expansion can be determined up to the second order.
Recently, Takeda \cite{Ta15, Ta16}
and Kawakami and Takeda \cite{KaTa}
obtained the complete expansion
for the linear and nonlinear damped wave equation with constant coefficients.

The wave equation with variable coefficient damping
\[
	u_{tt} - \Delta u + b(t,x)u_t = 0
\]
has been also intensively studied.
Yamazaki \cite{Ya06, Ya07} and Wirth \cite{Wi04, Wi06, Wi07JDE, Wi07ADE}
considered time-dependent damping $b=b(t)$.
Here we briefly explain their results
by restricting the damping $b$ to $b(t)= \mu (1+t)^{-\beta}$
with
$\mu > 0$ and $\beta \in \mathbb{R}$,
although they discussed more general $b(t)$:
(i) when $\beta>1$ (scattering), the solution scatters to a solution of the free wave equation;
(ii) when $\beta=1$ (non-effective weak dissipation),
the behavior of solutions depends on the constant $\mu$, and
the solution scatters with some modification;
(iii) when $\beta \in [-1,1)$ (effective),
the asymptotic profile of the solution is given by the scaled Gaussian;
(iv) when $\beta <-1$ (overdamping),
the solution tends to some asymptotic state,
which 
is nontrivial function for nontrivial data.
Hence our assumption \eqref{ef_dm} is reasonable
because the asymptotic behavior of solutions to the linear problem completely changes
when $\beta <-1$ or $\beta \ge 1$.

In the space-dependent damping case $b=b(x)=(1+|x|^2)^{-\alpha/2}$,
Mochizuki \cite{Mo76} (see also \cite{MoNa96}) proved that
if $\alpha >1$, then the energy of solution does not decay to zero in general
and solutions with data satisfying certain condition scatter to free solutions.
On the other hand, Todorova and Yordanov \cite{ToYo09} obtained
energy decay of solutions when $\alpha \in [0,1)$ and
the decay rates agree with those of the corresponding parabolic equation.
Moreover, the author of this paper \cite{Wa14JHDE} proved that the solution
actually has the diffusion phenomena when $\alpha \in [0,1)$.
In the critical case $\alpha = 1$, that is, $b=\mu (1+|x|^2)^{-1/2}$,
Ikehata, Todorova and Yordanov \cite{IkToYo13}
obtained optimal decay estimates of the energy of solutions
and found that the decay rate depends on the constant $\mu$.
However, the precise asymptotic profile is still open.
On the other hand,
Radu, Todorova and Yordanov \cite{RaToYo11, RaToYo16}
studied the diffusion phenomena
for solutions to the abstract damped wave equation
\[
	( L \partial_t^2 + \partial_t + A ) u = 0 
\]
by the method of the diffusion approximation,
where $A$ is a nonnegative self-adjoint operator, and
$L$ is a bounded positive self-adjoint operator. 
Recently, Nishiyama \cite{Nis15} studied the abstract damped wave equation
having the form
$( \partial_t^2 + M \partial_t + A ) u = 0$,
where $M$ is a bounded nonnegative self-adjoint operator. 
Moreover, as an application, he also determined the asymptotic profile
of solutions to the damped wave equation with variable coefficients
under a geometric control condition.

For the semilinear wave equation with space-dependent damping
\[
	u_{tt}-\Delta u +b(x)u_t = N(u),
\]
Ikehata, Todorova and Yordanov \cite{IkToYo09}
proved that when
$b(x) \sim (1+|x|)^{-\alpha}$ with $\alpha \in [0,1)$
and $N(u)=|u|^p$,
the critical exponent is $p=1+2/(n-\alpha)$
(see also Nishihara \cite{Ni10} for the case
$N(u)=-|u|^{p-1}u$
and
$b(x)=(1+|x|^2)^{-\alpha/2}$ with $\alpha\in [0,1)$).

Recently, the asymptotic behavior of solutions to the semilinear wave equation
with time-dependent damping
\[
	u_{tt} - \Delta u + b(t) u_t = N(u)
\]
was also studied.
When $b(t)=(1+t)^{-\beta}\ (-1<\beta<1)$ and $N(u)=|u|^p$,
Lin, Nishihara and Zhai \cite{LiNiZh12} determined the critical exponent as
$p=1+2/n$,
provided that the initial data belong to
$H^{1,0}(\mathbb{R}^n) \times L^2(\mathbb{R}^n)$ with compact support.
D'Abbicco, Lucente and Reissig \cite{DaLuRe13} (see also \cite{DaLu13})
extended this result to more general
$b(t)$ satisfying a monotonicity condition and a polynomial-like behavior.
Moreover, they relaxed the assumption on the data
to exponentially decaying condition.
They also dealt with the initial data belong to the class
$(L^1(\mathbb{R}^n) \cap H^{1,0}(\mathbb{R}^n))
\times (L^1(\mathbb{R}^n) \cap L^2(\mathbb{R}^n))$ when $n\le 4$.
We also refer the reader to D'Abbicco \cite{Da15} for the critical case $\beta=1$.
On the other hand,
Nishihara \cite{Ni11AA} studied the asymptotic profile of solutions
in the case
$n=1, b=(1+t)^{-\beta}\ (-1<\beta<1)$,
$(u_0,u_1)\in H^{1,0}(\mathbb{R}^n) \times L^2(\mathbb{R}^n)$ with compact support
and $N(u)=-|u|^{p-1}u$
(see also \cite{NiZh09}).
He proved that
the asymptotic profile is given by the scaled Gaussian.
However, the asymptotic profile of solutions in higher dimensional cases $n\ge 2$ remains open.
Furthermore, even for the small data global existence,
there are no results for non exponentially decaying initial data when $n\ge 5$.
Here we also refer the reader to
\cite{Kh13, LiNiZh10, LiNiZh11, Wa12, Wa14DCDS}
for space and time dependent damping cases.

In this paper,
we shall prove the existence of the global-in-time solution to the Cauchy problem \eqref{dw}
with suitably small $\varepsilon$
and determine the asymptotic profile. 
Our result extends that of \cite{Ni11AA} to
higher dimensional cases $n\ge 2$,
more general damping $b=b(t)$,
non exponentially decaying initial data 
and with lower order perturbations.
Moreover, in the one-dimensional case, 
we can treat
more general nonlinear terms $N=N(u,u_x,u_t)$
including first order derivatives.
For the proof,
we basically follow the method of Gallay and Raugel \cite{GaRa98}.
To extend their argument to variable damping cases,
we introduce new scaling variables
\[
	s = \log (B(t) + 1 ),\quad y = (B(t) + 1 )^{-1/2}x,\quad
	B(t) = \int_{0}^t \frac{d\tau}{b(\tau)}
\]
instead of \eqref{gr_sv}.
Then, we decompose the solution to the asymptotic profile and the remainder term,
and prove that remainder term decays to zero as time tends to infinity
by using the energy method.
To estimate the energy of the remainder term, in \cite{GaRa98},
they used the primitive of the remainder term
$F(s,y) = \int_{-\infty}^y f(s,z) dz$.
However, this does not work in higher dimensional cases $n\ge 2$.
To overcome this difficulty,
we employ the idea from Coulaud \cite{Co14} in which
asymptotic profiles for the second grade fluids equation
were studied in the three dimensional space.
Namely, we shall use the fractional integral of the form
$\hat{F}(\xi) = |\xi|^{-n/2-\delta}\hat{f}(\xi)$
with $0<\delta<1$,
and apply the energy method to $\hat{F}$ in the Fourier side.
Since the remainder term $f$ satisfies $\hat{f}(0)=0$,
$\hat{F}$ makes sense and
enables us to control the term
$\| \hat{f} \|_{L^2}$
in energy estimates.

This paper is organized as follows.
In the next section, we state the precise assumptions and our main result.
Section 3 is devoted to a proof of the main result.
The proof of energy estimates is divided into the one-dimensional case
and the higher dimensional cases.
After that, we will unify both cases and complete the proof of our result except for the
estimates of the error terms.
These error estimates will be given in Section 4.

We end up this section with some notations used in this paper.
For a complex number $\zeta$, we denote by ${\rm Re\,} \zeta$ its real part.
The letter $C$ indicates a generic positive constant, which may change from line to line.
In particular, we denote by
$C(\ast, \ldots, \ast)$
constants depending on the quantities appearing in parenthesis.
We 
use the symbol
$f \sim g$, which stands for
$C^{-1}g \le f \le C g$
with some $C \ge 1$.
For a function $u=u(t,x):[0,\infty)\times\mathbb{R}^n\to\mathbb{R}$,
we 
write
$u_t=\frac{\partial u}{\partial t}(t)$,
$\partial_{x_i}u=\frac{\partial u}{\partial x_i}\ (i=1,\ldots,n)$,
$\nabla_xu={}^t(\partial_{x_1}u, \ldots, \partial_{x_n}u)$
and
$\Delta u(t,x)=\sum_{i=1}^n \partial_{x_i}^2 u(t,x)$.
Furthermore, 
we sometimes use
$\langle x \rangle := \sqrt{1+|x|^2}$.

For a function $f=f(x):\mathbb{R}^n \to \mathbb{R}$,
we denote the Fourier transform of $f$ by $\hat{f}=\hat{f}(\xi)$, that is,
\[
	\hat{f}(\xi) = (2\pi)^{-n/2} \int_{\mathbb{R}^n} f(x) e^{-ix\xi} dx.
\]
Let $L^p(\mathbb{R}^n)$ and $H^{k,m}(\mathbb{R}^n)$
be usual Lebesgue and weighted Sobolev spaces, respectively,
equipped with the norms defined by
\begin{align*}
	&\| f \|_{L^p} = \left( \int_{\mathbb{R}^n} |f(x)|^p dx \right)^{1/p}\ (1\le p <\infty),\quad
	\| f\|_{L^{\infty}} = {\rm ess\,sup\,}_{x\in \mathbb{R}^n}|f(x)|,\\
	&\| f \|_{H^{k,m}} = \sum_{|\alpha|\le k} \| (1+|x|)^{m} \partial_x^{\alpha}f \|_{L^2}\quad
	(k\in \mathbb{Z}_{\ge 0}, m\ge 0).
\end{align*}
For an interval $I$ and a Banach space $X$,
we define $C^r(I;X)$ as the space of $r$-times continuously differentiable mapping from
$I$ to $X$ with respect to the topology in $X$.


\section{Main result}
Let us introduce our main result.
First, we put the following assumptions:
\vspace{1em}

{\bf Assumptions}
\begin{itemize}
\item[(i)]
The initial data $(u_0,u_1)$ belong to
$H^{1,m}(\mathbb{R}^n) \times H^{0,m}(\mathbb{R}^n)$,
where
$m=1\ (n=1)$
and
$m>n/2+1\ (n\ge 2)$.

\item[(ii)]
The coefficient of the damping term $b(t)$ satisfies
\begin{align}
\label{b0}
	C^{-1}(1+t)^{-\beta} \le b(t) \le C(1+t)^{-\beta},\quad
	\left| \frac{db}{dt}(t) \right| \le C (1+t)^{-1}b(t)
\end{align}
with some $\beta \in [-1,1)$.

\item[(iii)]
The functions $c(t)$ and $d(t)$ satisfy
\begin{align}
\label{c}
	|c(t)| \le C (1+t)^{-\gamma},\quad |d(t)|\le C(1+t)^{-\nu}
\end{align}
with some $\gamma > (1+\beta)/2$ and $\nu > 1+\beta$.

\item[(iv)-(1)]
When $n=1$, the nonlinearity $N$ is of the form
\[
	N = \sum_{i=1}^{k} N_i(u,u_{x},u_{t})
\]
for some $k \ge 0$ and each
$N_i=N_i(z) = N_i(z_1, z_2, z_3)$ satisfies
\begin{align}
\label{N1}
	\left\{\begin{array}{l}
	|N_i(z)|\le C|z_1|^{p_{i1}}|z_2|^{p_{i2}}|z_3|^{p_{i3}},\quad
	p_{ij} \ge 1\ \mbox{or}\ =0,\quad 
	p_{i1} > 1,\quad p_{i2}+p_{i3}\le 1,\\[5pt]
	\displaystyle p_{i1}+2p_{i2}+\left( 3 - \frac{2\beta}{1+\beta} \right)p_{i3} > 3,
	\end{array}\right.
\end{align}
where we note that when $\beta = -1$,
the number
$- 2\beta/(1+\beta)$
is interpreted as an arbitrary large number.
Moreover, to ensure the existence of local-in-time solutions,
we assume that, 
for any $R>0$, there exists a constant $C(R)>0$ such that 
\begin{align}
\label{lip}
	|N_i(z) - N_i(w) |
		\le C(R) \left[ |z_1-w_1|  
			( 1+ |z_2| + |w_2| + |z_3| + |w_3| )
			+ |z_2-w_2| + |z_3 - w_3| \right]
\end{align}
for
$z_i, w_i \in \mathbb{R}\ (i=1,2,3)$
satisfying
$|z_1|, |w_1| \le R$.

\item[(iv)-(2)]
When $n\ge 2$, the nonlinearity $N$
is of class $C^1$
and independent of $\nabla_{x}u, u_{t}$,
that is,
$N=N(u)$.
Moreover, $N$ satisfies
\begin{align}
\label{N2}
	\left\{\begin{array}{l}
	\left| N(u) \right| \le C|u|^{p},\\
	\displaystyle 2 < p <+\infty\ (n=2),\quad 1+\frac{2}{n} < p \le \frac{n}{n-2}\ (n\ge 3).
	\end{array}\right.
\end{align}
Also, to ensure the existence of local-in-time solutions, 
we assume that
\begin{align}
\label{lip2}
		| N(u) - N(v) | \le C |u -v |( |u| + |v| )^{p-1}.
\end{align}
\end{itemize}

\begin{remark}
\begin{itemize}
\item[(i)]By the above assumptions, as we will see later, we can regard
the terms
$c(t)\cdot \nabla_{x}u$, $d(t) u$ and $N(u,\nabla_{x}u,u_{t})$
as perturbations.
\item[(ii)]
We can treat the case where the coefficients
$b(t), c(t)$ and $d(t)$ depend on both $t$ and $x$.
More precisely, our result is also valid for
$b=b(t,x), c=c(t,x)$ and $d=d(t,x)$ such that
$b(t,x) = b_0(t) + b_1(t,x)$
with
$b_0$ satisfying Assumption (ii),
$b_1(t,x)$ fulfilling $|b_1(t,x)|\le C (1+t)^{-\mu}\ (\mu > \beta)$
and $c(t,x), d(t,x)$ satisfying
$|c(t,x)| \le C (1+t)^{-\gamma}\ (\gamma>(1+\beta)/2)$,
$|d(t,x)| \le C (1+t)^{-\nu}\ (\nu >1+\beta)$.
\item[(iii)]
A typical example satisfying the assumptions \eqref{N1} and \eqref{lip} is
\[
	N = |u|^{p}u + |u|^{q}u_x + |u|^{r}u_t
\]
with $p>2$, $q>1$ and $r> 1$. 
\item[(iv)]
The assumption $1+2/n < p$ in \eqref{N2} is sharp in the sense that,
if $N(u) = |u|^p$, $1<p\le 1+2/n$
and the initial data satisfies 
$\int_{\mathbb{R}^n} (u_0+ b_{\ast} u_1)(x) dx > 0$ 
with $b_{\ast} = \int_0^{\infty} \exp ( - \int_0^t b(\tau) d\tau ) dt$, 
then the local-in-time solution blows up in finite time
(see \cite{IkToYo09, LiZh95, LiNiZh11, ToYo01, Zh01}).
\item[(v)]
When $n=1$, we can also treat the principal term with variable coefficient
$(a(x)u_x)_x$ satisfying
\[
	\inf_{x\in\mathbb{R}}a(x) >0,\quad
	\lim_{x\to\pm\infty}a(x) = a_{\pm} > 0
\]
instead of $u_{xx}$.
However, the argument is the same as in Gallay and Raugel \cite{GaRa98}
and hence, we do not pursue here for simplicity.
\item[(vi)]
There are no mutual implication relations
between the assumptions on the damping
$b$ in ours and Wirth \cite{Wi07JDE}, D'Abbicco, Lucente and Reissig \cite{DaLuRe13}.
\end{itemize}
\end{remark}

To state our result, we put
\begin{align}
\label{def_B}
	B(t) = \int_0^{t}\frac{d\tau}{b(\tau)}
\end{align}
and
\begin{align}
\label{gauss}
	\mathcal{G}(t,x) = (4\pi t)^{-n/2} \exp\left( - \frac{|x|^2}{4t} \right).
\end{align}
We note that the assumption \eqref{b0} implies that
$B(t)$ is strictly increasing, and $\lim_{t\to\infty}B(t) = +\infty$.

The main result of this paper is the following:
\begin{theorem}\label{thm}
Under the Assumptions (i)--(iv), there exists some $\varepsilon_0>0$
such that, 
for any $\varepsilon \in (0,\varepsilon_0]$,
there exists a unique solution
\begin{align*}
	u \in C([0,\infty);H^{1,m}(\mathbb{R}^n))\cap C^1([0,\infty);H^{0,m}(\mathbb{R}^n))
\end{align*}
for the Cauchy problem \eqref{dw}.
Moreover,
there exists the limit
\[
	\alpha^{\ast} = \lim_{t\to \infty} \int_{\mathbb{R}^n}u(t,x) \,dx
\]
such that the solution $u$ satisfies
\begin{align}
\label{sol_asym}
	\| u (t,\cdot) - \alpha^{\ast} \mathcal{G}(B(t),\cdot) \|_{L^2}
		\le C \varepsilon (B(t)+1)^{-n/4-\lambda}
			\| (u_0,u_1) \|_{H^{1,m}\times H^{0,m}}
\end{align}
for $t \ge 1$.
Here
$\lambda$
is defined by
\[
	\lambda =
	\min \left\{ \frac{1}{2}, \frac{m}{2}-\frac{n}{4}, \lambda_0,\lambda_1 \right\} -\eta
\]
with arbitrary small number $\eta>0$,
and $\lambda_0$ and $\lambda_1$ are defined by
\[
	\lambda_0 = \min \left\{ \frac{1-\beta}{1+\beta},
						\frac{\gamma}{1+\beta}-\frac{1}{2}, \frac{\nu}{1+\beta}-1 \right\},
\]
where we interpret $1/(1+\beta)$ as an arbitrary large number when $\beta=-1$,
and
\begin{align*}
	\lambda_1 = \begin{cases}
		\displaystyle \frac{1}{2}\min_{i=1,\ldots,k}
			\left\{ p_{i1} +2p_{i2} + \left( 3 - \frac{2\beta}{1+\beta} \right)p_{i3} - 3 \right\},
			& n=1,\\[7pt]
		\displaystyle \frac{n}{2} \left( p - 1- \frac{2}{n} \right), & n\ge 2.
	\end{cases}
\end{align*}
Here we interpret
$-2\beta p_{i3}/(1+\beta)$
as an arbitrary large number when $p_{i3} \neq 0$ and $\beta = -1$.
\end{theorem}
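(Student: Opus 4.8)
The plan is to combine a standard local existence result with an \emph{a priori} estimate carried out in self-similar variables, and then to close the argument by a continuation argument. Since the Lipschitz conditions \eqref{lip} and \eqref{lip2} hold and the coefficients are smooth, a contraction mapping argument in $C([0,T];H^{1,m}(\mathbb{R}^n))\cap C^1([0,T];H^{0,m}(\mathbb{R}^n))$ produces a unique solution on a maximal interval $[0,T_{\max})$, and it suffices to bound the relevant norm uniformly in $t$ in order to conclude $T_{\max}=\infty$. To this end I pass to the scaling variables $s=\log(B(t)+1)$, $y=(B(t)+1)^{-1/2}x$ and set $w(s,y)=(B(t)+1)^{n/2}u(t,x)$, a normalization that preserves the spatial mass $\int_{\mathbb{R}^n}u\,dx=\int_{\mathbb{R}^n}w\,dy$ and for which $\|u(t,\cdot)\|_{L^2}=(B(t)+1)^{-n/4}\|w(s,\cdot)\|_{L^2}$. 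Using $B'(t)=1/b(t)$ together with \eqref{b0}, equation \eqref{dw} transforms into
\[
	w_s=\mathcal{L}w+R(s,y),\qquad
	\mathcal{L}:=\Delta_y+\frac12\,y\cdot\nabla_y+\frac{n}{2},
\]
where the remainder $R$ gathers: the second-order-in-time term, which acquires a factor comparable to $(B(t)+1)^{-(1-\beta)/(1+\beta)}$ and is thus exponentially small in $s$; the drift and potential contributions of $c(t)$ and $d(t)$, which by \eqref{c} decay in $s$ with exponential rates $\gamma/(1+\beta)-1/2$ and $\nu/(1+\beta)-1$; and the nonlinearity, which is superlinear in $w$. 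Since \eqref{dw} is second order in time, following Gallay and Raugel \cite{GaRa98} one actually works with the pair $(w,w_s)$, equivalently with an auxiliary unknown built from $u_t$, but the leading dynamics is governed by $\mathcal{L}$.

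On the polynomially weighted space $L^2(m):=L^2(\mathbb{R}^n,(1+|y|^2)^m\,dy)$, the operator $\mathcal{L}$ has kernel spanned by the Gaussian profile $\varphi_0(y):=(4\pi)^{-n/2}e^{-|y|^2/4}$, while on the subspace of functions of zero mean its spectrum lies in $\{{\rm Re}\,\zeta\le-\min\{1/2,\,m/2-n/4\}\}$. I therefore decompose $w(s,y)=\alpha(s)\varphi_0(y)+f(s,y)$, where $\alpha(s):=\int_{\mathbb{R}^n}w(s,y)\,dy$ so that $\int_{\mathbb{R}^n}f\,dy=0$. Integrating the equation over $y$ and using $\int_{\mathbb{R}^n}\mathcal{L}w\,dy=0$ gives $\alpha'(s)=\int_{\mathbb{R}^n}R(s,y)\,dy$; the error estimates of Section 4 make this right-hand side integrable in $s$, so $\alpha(s)$ converges to some limit $\alpha^{\infty}$, and unwinding the change of variables identifies $\alpha^{\ast}=\lim_{t\to\infty}\int_{\mathbb{R}^n}u\,dx$ with $\alpha^{\infty}$. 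The remainder solves $f_s=\mathcal{L}f+\bigl(R-\alpha'(s)\varphi_0\bigr)$ with zero mean, and the whole problem is reduced to proving $\|f(s,\cdot)\|_{L^2(m)}\le C\varepsilon e^{-\lambda s}$ together with comparable decay for $\nabla_y f$ and $w_s$.

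The heart of the matter is the energy estimate for $f$. When $n=1$ this proceeds as in \cite{GaRa98}: one differentiates a weighted energy built from $f$ and its primitive $F(s,y)=\int_{-\infty}^{y}f(s,z)\,dz$, so that the spectral gap of $\mathcal{L}$ furnishes the exponential decay, with rates $\lambda_1$ coming from the nonlinear terms and $\lambda_0$ from the lower-order perturbations. When $n\ge2$ the primitive is not available, and \emph{this is the main obstacle}: the dissipation produced by $\Delta_y$ controls $\||\xi|\hat{f}\|_{L^2}$ but not $\|\hat{f}\|_{L^2}$ near $\xi=0$. Following Coulaud \cite{Co14}, I introduce the fractional integral $\hat{F}(\xi):=|\xi|^{-n/2-\delta}\hat{f}(\xi)$ with $0<\delta<1$; since $\hat{f}(0)=0$ because $f$ has zero mean, and $f\in L^2(m)$ with $m>n/2+1$, the function $\hat{F}$ is well defined in $L^2$, and carrying out the energy method for $\hat{F}$ in the Fourier variables recovers the missing low-frequency control of $\hat{f}$. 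Combining the weighted $L^2$ energy, this enhanced dissipation, and the bounds on $R$ established in Section 4, a bootstrap argument -- in which, for $\varepsilon$ small, the superlinear contributions are absorbed -- yields $\|f(s,\cdot)\|\le C\varepsilon e^{-\lambda s}$. Translating back through $s=\log(B(t)+1)$ and $w=(B(t)+1)^{n/2}u$, and noting that $\alpha(s)\varphi_0(y)$ corresponds to $\alpha^{\ast}\mathcal{G}(B(t),x)$ up to errors of size $e^{-\lambda s}\bigl(|\alpha(s)-\alpha^{\infty}|+(B(t)+1)^{-1}\bigr)$, yields \eqref{sol_asym} and, in particular, $T_{\max}=\infty$.
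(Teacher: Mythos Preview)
Your outline is correct and follows essentially the same route as the paper: the same scaling variables, the same spectral decomposition $w=\alpha(s)\varphi_0+f$ with zero-mean remainder, the primitive $F$ for $n=1$ and Coulaud's fractional integral $\hat F=|\xi|^{-n/2-\delta}\hat f$ for $n\ge 2$, and a weighted energy/bootstrap argument giving the rate $\lambda$. The paper differs only in that it does not invoke the spectrum of $\mathcal L$ abstractly but instead writes out the full first-order system for $(v,w)$ (equivalently $(f,g)$), builds explicit energies $E_0,\dots,E_5$ with cross terms $\tfrac{e^{-s}}{b(t(s))^2}fg$, and derives the convergence of $\alpha(s)$ \emph{after} the a priori bound via $\int e^{2\lambda\tau}|\alpha'(\tau)|^2\,d\tau<\infty$, rather than directly from integrability of $\int R\,dy$; your heuristic reduction to $w_s=\mathcal Lw+R$ is fine as motivation, but the system structure is what actually produces the energy identities.
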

\begin{remark}\label{rem_alpha}
If
$N=c=d=0$, namely there are no perturbation terms,
and if
$\beta$ is close to $1$ so that
$\min \{ 1/2, m/2-n/4, (1-\beta)/(1+\beta)\} = (1-\beta)/(1+\beta)$,
then
$\lambda = (1-\beta)/(1+\beta) - \eta$
with arbitrary small $\eta>0$,
and we expect that
the gain of the decay rate $(1-\beta)/(1+\beta)$ is optimal, in other words,
the second order approximation of $u$
decays as
$(B(t)+1)^{-n/4-(1-\beta)/(1+\beta)}$.
The higher order asymptotic expansion will be discussed in a forthcoming paper.
\end{remark}


\section{Proof of the main theorem}
\subsection{Scaling variables}
We introduce the following scaling variables:
\begin{align}
\label{sc}
	s = \log (B(t) + 1 ),\quad y = (B(t) + 1 )^{-1/2}x
\end{align}
and
\begin{align*}
	v(s,y)=e^{ns/2}u(t(s),e^{s/2}y),\quad
	w(s,y)= b(t(s)) e^{(n+2)s/2}u_t( t(s), e^{s/2}y), 
\end{align*}
or equivalently,
\begin{align}
\label{uvw}
	\begin{array}{l}
	\displaystyle u(t,x) = (B(t)+1)^{-n/2}v(\log(B(t)+1), (B(t)+1)^{-1/2}x),\\[5pt]
	\displaystyle u_t(t,x) = b(t)^{-1}(B(t)+1)^{-n/2-1}w(\log(B(t)+1), (B(t)+1)^{-1/2}x),
	\end{array}
\end{align} 
where we have used the notation 
$t(s) = B^{-1}(e^s-1)$. 
Then, the problem \eqref{dw} is transformed as
\begin{align}
\label{eq_vw}
	\left\{\begin{array}{ll}
	\displaystyle v_s-\frac{y}{2}\cdot \nabla_yv - \frac{n}{2}v = w,&s>0, y\in \mathbb{R}^n,\\[7pt]
	\displaystyle
		\frac{e^{-s}}{b(t(s))^2}
		\left( w_s-\frac{y}{2}\cdot \nabla_yw -\left(\frac{n}{2}+1\right)w \right)+w
		 = \Delta_yv+r(s,y),&s>0,y\in \mathbb{R}^n,\\[7pt]
	\displaystyle v(0,y) = \varepsilon v_0(y) = \varepsilon u_0(y),\ 
				w(0,y) = \varepsilon w_0(y) = \varepsilon b(0) u_1(y),
				&y\in \mathbb{R}^n,
	\end{array}\right.
\end{align}
where
\begin{align}
\nonumber
	r(s,y) &= \frac{1}{b(t(s))^2} \frac{db}{dt}(t(s)) w 
		 +e^{s/2}c(t(s))\cdot\nabla_yv+e^{s}d(t(s))v\\ 
\label{r}
		&\quad +e^{(n+2)s/2}
		 	N\left( e^{-ns/2}v,
						e^{-(n+1)s/2}\nabla_yv,
						b(t(s))^{-1}e^{-(n+2)s/2}w \right).
\end{align}

\subsection{Preliminary lemmas}
First, we collect frequently used relations and estimates.
\begin{lemma}\label{lem_b0}
We have
\begin{align}
\label{d_b}
	\frac{d}{ds} b (t(s)) = \frac{db}{dt}(t(s)) b(t(s)) e^s,
	\quad \frac{d}{ds} \frac{1}{b(t(s))^2} = - \frac{2}{b(t(s))^2} \frac{db}{dt}(t(s))e^s.
\end{align}
\end{lemma}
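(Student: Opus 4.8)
The plan is to reduce both identities in \eqref{d_b} to a single computation, namely that of $\tfrac{dt}{ds}$, followed by the chain rule. First I would record the standing fact, already noted just before Theorem \ref{thm}, that Assumption (ii) makes $B$ a smooth, strictly increasing bijection of $[0,\infty)$ onto $[0,\infty)$; hence $B^{-1}$ is smooth and $t(s) = B^{-1}(e^s-1)$ is a well-defined smooth function of $s\in[0,\infty)$ with $t(0)=0$. This is the only point that needs a word of justification before differentiating.

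Next I would differentiate the defining relation from \eqref{sc}, which reads $e^s = B(t(s))+1$. Since $B'(t)=1/b(t)$ by \eqref{def_B}, differentiating in $s$ gives $e^s = B'(t(s))\,\tfrac{dt}{ds} = b(t(s))^{-1}\tfrac{dt}{ds}$, and therefore
\[
	\frac{dt}{ds} = b(t(s))\,e^s .
\]
The first identity in \eqref{d_b} is then immediate from the chain rule: $\tfrac{d}{ds} b(t(s)) = \tfrac{db}{dt}(t(s))\,\tfrac{dt}{ds} = \tfrac{db}{dt}(t(s))\,b(t(s))\,e^s$. For the second, I would write $\tfrac{d}{ds}\,b(t(s))^{-2} = -2\,b(t(s))^{-3}\,\tfrac{d}{ds}b(t(s))$ and substitute the expression just obtained; a single cancellation of one power of $b(t(s))$ yields $-2\,b(t(s))^{-2}\,\tfrac{db}{dt}(t(s))\,e^s$, as claimed.

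There is no real obstacle here: the content of the lemma is the chain rule applied to $s\mapsto t(s)$ together with the elementary computation of $\tfrac{dt}{ds}$ from $e^s=B(t(s))+1$. The only thing to be careful about is the smoothness of $t(s)$, which follows from the invertibility of $B$; $\tfrac{db}{dt}(t(s))$ itself makes sense because $b$ is assumed smooth in Assumption (ii). These relations will be used repeatedly in the sequel to handle the $s$-derivatives of the coefficient $b(t(s))$ appearing in \eqref{eq_vw} and \eqref{r}.
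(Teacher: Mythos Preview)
Your proof is correct and follows essentially the same approach as the paper's own proof: both compute $\tfrac{dt}{ds}=b(t(s))e^s$ (you by differentiating the relation $e^s=B(t(s))+1$, the paper via the inverse function theorem applied to $B^{-1}$) and then apply the chain rule; the second identity is obtained in both cases from $\tfrac{d}{ds}b(t(s))^{-2}=-2b(t(s))^{-3}\tfrac{d}{ds}b(t(s))$.
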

\begin{proof}
First, we note that the function
$\sigma=B(t)$
is strictly increasing, and hence, the inverse
$t= B^{-1}(\sigma)$
exists and
\[
	\frac{d}{d\sigma} B^{-1}(\sigma) = \left( \frac{dB}{dt}(t) \right)^{-1}
	= b(t).
\]
Combining this with $s=\log (B(t) + 1)$, we obtain
\begin{align*}
	\frac{d}{ds} b (t(s)) &= \frac{d}{ds} b \left( B^{-1}( e^{s} - 1) \right) \\
	&= \frac{db}{dt}(t(s)) \frac{d}{ds} B^{-1}( e^{s} - 1 ) \\
	&= \frac{db}{dt}(t(s)) \left( \frac{dB}{dt} (t(s)) \right)^{-1} \frac{d}{ds}( e^s - 1 )\\
	&= \frac{db}{dt}(t(s)) b(t(s)) e^s.
\end{align*}
This shows the first assertion of \eqref{d_b}.
Moreover, we have
\[
	\frac{d}{ds}\frac{1}{b(t(s))^2}
	= - \frac{2}{b(t(s))^3} \frac{d}{ds}b(t(s))
	= - \frac{2}{b(t(s))^2}  \frac{db}{dt}(t(s)) e^s,
\]
which shows the second assertion of \eqref{d_b}.
\end{proof}

Next, the assumption \eqref{b0} implies the following:
\begin{lemma}\label{lem_b0b1}
Under the assumption \eqref{b0}, we have the following estimates.
\begin{itemize}
\item[(i)]
When $\beta \in (-1,1)$, we have
\begin{align*}
	&b(t(s)) \sim e^{-\beta s/(1+\beta)},\quad
	\frac{e^{-s}}{b(t(s))^2} \sim e^{-(1-\beta)s/(1+\beta)},
	\quad \frac{1}{b(t(s))^2} \left| \frac{db}{dt}(t(s)) \right| \le C e^{-(1-\beta)s/(1+\beta)}.
\end{align*}
\item[(ii)]
When $\beta =-1$, we have
\begin{align*}
	&b(t(s)) \sim \exp \left( e^s \right),\quad
	\frac{e^{-s}}{b(t(s))^2} \sim \exp \left( -2e^s - s \right),\quad
	\frac{1}{b(t(s))^2} \left| \frac{db}{dt}(t(s)) \right|
		\le C \exp \left( -2e^{s} \right).
\end{align*}
\end{itemize}
\end{lemma}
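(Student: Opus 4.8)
The plan is to push everything through the change of variables $e^{s}=B(t)+1$, $t=t(s)=B^{-1}(e^{s}-1)$, so that the lemma reduces to two elementary steps: a two-sided estimate for $B(t)$ obtained by integrating \eqref{b0}, and the resulting two-sided estimate for $1+t(s)$ obtained by inverting it. Once $1+t(s)$ is controlled in terms of $s$, each of the three displayed quantities follows by substituting the bound $b(t)\sim(1+t)^{-\beta}$ from \eqref{b0}, together with the derivative bound $|\frac{db}{dt}(t)|\le C(1+t)^{-1}b(t)$ for the last one.

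For $\beta\in(-1,1)$ I would integrate $C^{-1}(1+\tau)^{\beta}\le b(\tau)^{-1}\le C(1+\tau)^{\beta}$ over $[0,t]$ and use $\int_{0}^{t}(1+\tau)^{\beta}\,d\tau=\frac{(1+t)^{1+\beta}-1}{1+\beta}$ to get $e^{s}=B(t)+1\sim(1+t)^{1+\beta}$ uniformly for $t\ge0$; inverting this gives $1+t(s)\sim e^{s/(1+\beta)}$. Plugging this into $b(t(s))\sim(1+t(s))^{-\beta}$ yields $b(t(s))\sim e^{-\beta s/(1+\beta)}$; writing $e^{-s}b(t(s))^{-2}\sim(1+t(s))^{-(1+\beta)}(1+t(s))^{2\beta}=(1+t(s))^{\beta-1}$ and re-expressing in $s$ gives the second relation $e^{-s}b(t(s))^{-2}\sim e^{-(1-\beta)s/(1+\beta)}$; and $b(t(s))^{-2}|\frac{db}{dt}(t(s))|\le C(1+t(s))^{-1}b(t(s))^{-1}\sim C(1+t(s))^{\beta-1}$ gives the third. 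The case $\beta=-1$ is identical in structure, except that $b(\tau)^{-1}\sim 1+\tau$ now integrates to $B(t)\sim\log(1+t)$, so $e^{s}-1=B(t)\sim\log(1+t)$ and inversion produces the double exponential $1+t(s)\sim\exp(e^{s})$ (with the $\sim$-constant carried inside the exponent, which is the sense in which the relation is used later); then $b(t(s))\sim 1+t(s)\sim\exp(e^{s})$, $e^{-s}b(t(s))^{-2}\sim\exp(-2e^{s}-s)$, and $b(t(s))^{-2}|\frac{db}{dt}(t(s))|\le C(1+t(s))^{-2}\sim C\exp(-2e^{s})$.

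The computation is routine; the only mild subtlety is to make the two-sided estimate for $B(t)$ and its inversion valid uniformly for all $s\ge0$ (equivalently all $t\ge0$), not merely for large $t$, and to handle the threshold at $\beta=-1$ where the power law for $B$ degenerates to a logarithm and the inversion becomes a double exponential rather than a power.
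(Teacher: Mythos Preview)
Your approach is essentially identical to the paper's: both integrate the two-sided bound in \eqref{b0} to obtain $e^{s}\sim(1+t(s))^{1+\beta}$ (respectively $e^{s}\sim\log(1+t(s))+1$ when $\beta=-1$), invert to control $1+t(s)$ in terms of $s$, and then substitute back into the bounds on $b$ and $db/dt$. One small slip: in the $\beta=-1$ paragraph you wrote $b(\tau)^{-1}\sim 1+\tau$, but of course $b(\tau)\sim 1+\tau$ so $b(\tau)^{-1}\sim(1+\tau)^{-1}$, which is what actually integrates to $\log(1+t)$; your subsequent conclusions are correct, so this is just a typo.
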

\begin{proof}
(i)
When $\beta \in (-1,1)$, from \eqref{def_B} and \eqref{sc} we compute as
\begin{align*}
	e^s = B(t(s)) + 1 = \int_0^{t(s)}\frac{d\tau}{b(\tau)} +1
		\sim \int_0^{t(s)} (1+\tau)^{\beta} d\tau + 1 \sim (1+t(s))^{1+\beta}.
\end{align*}
Therefore, one has
$1+t(s) \sim e^{s/(1+\beta)}$,
and hence,
\begin{align*}
	b(t(s)) \sim (1+t(s))^{-\beta} \sim e^{-\beta s/(1+\beta)}.
\end{align*}
By the assumption \eqref{b0}, the other estimates can be obtained in a similar way.

(ii)
When $\beta = -1$, we have
\[
	e^{s} = B(t(s)) + 1 \sim  \int_0^{t(s)} (1+\tau)^{-1}d\tau + 1 = \log (1+t(s)) + 1,
\]
and hence, $b(t(s)) \sim 1+t(s) \sim \exp (e^s)$ holds.
We can prove the other estimates in the same way, and the proof is omitted.
\end{proof}

We sometimes employ the
Gagliardo-Nirenberg inequality:
\begin{lemma}[Gagliardo-Nirenberg inequality]\label{lem_gn}
Let $1<p<\infty \ (n=1,2)$ and $1<p\le n/(n-2) \ (n\ge 3)$.
Then for any $f \in H^{1,0}(\mathbb{R}^n)$, we have
\begin{align*}
	\| f \|_{L^{2p}} \le C \| \nabla f \|^{\sigma}_{L^2} \| f \|_{L^2}^{1-\sigma},
\end{align*}
where
$\sigma = n(p-1)/(2p)$.
\end{lemma}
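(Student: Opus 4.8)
The plan is to prove the stated inequality
\[
	\| f \|_{L^{2p}} \le C \| \nabla f \|^{\sigma}_{L^2} \| f \|_{L^2}^{1-\sigma},
	\qquad \sigma = \frac{n(p-1)}{2p},
\]
by the standard interpolation route, reducing everything to the single Sobolev embedding $H^{1,0}(\mathbb{R}^n)\hookrightarrow L^{q^\ast}(\mathbb{R}^n)$ together with H\"older interpolation between Lebesgue exponents. First I would verify the scaling bookkeeping: for the inequality to be dimensionally consistent under the rescaling $f_\lambda(x)=f(\lambda x)$ one needs exactly $\sigma=n(p-1)/(2p)$, which is the value in the statement, so the exponent is forced and the estimate is at least scale-invariant. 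I would then treat the range of $p$ as prescribed, so that $2p$ lies in the admissible Sobolev range in each dimension: for $n\ge 3$ the hypothesis $1<p\le n/(n-2)$ gives $2\le 2p\le 2n/(n-2)=2^\ast$, while for $n=1,2$ any finite exponent is allowed.

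The core of the argument is an interpolation between $L^2$ and the critical Sobolev exponent. For $n\ge 3$, set $2^\ast=2n/(n-2)$ and write $\tfrac{1}{2p}=\tfrac{1-\theta}{2}+\tfrac{\theta}{2^\ast}$ for the appropriate $\theta\in[0,1]$; a direct computation gives $\theta=\sigma$. H\"older's inequality then yields
\[
	\| f \|_{L^{2p}} \le \| f \|_{L^2}^{1-\sigma}\,\| f \|_{L^{2^\ast}}^{\sigma},
\]
and the Sobolev inequality $\| f \|_{L^{2^\ast}}\le C\| \nabla f \|_{L^2}$ converts the second factor into $\| \nabla f\|_{L^2}^{\sigma}$, producing exactly the claim. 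For $n=2$ the critical exponent is infinite, so I would instead interpolate $\|f\|_{L^{2p}}\le \|f\|_{L^2}^{1-\sigma}\|f\|_{L^{q}}^{\sigma}$ against a large fixed exponent $q$ with $q$ chosen so that the Sobolev embedding $H^{1,0}(\mathbb{R}^2)\hookrightarrow L^{q}$ holds, and then check that the resulting power of $\|\nabla f\|_{L^2}$ matches $\sigma$ after optimizing. For $n=1$ one has $H^{1,0}(\mathbb{R})\hookrightarrow L^\infty$ together with the elementary pointwise bound $f(x)^2=2\int_{-\infty}^x f f'\,dz$, which gives $\|f\|_{L^\infty}^2\le 2\|f\|_{L^2}\|f'\|_{L^2}$; interpolating $L^{2p}$ between $L^2$ and $L^\infty$ and inserting this reproduces the stated exponent $\sigma=(p-1)/(2p)$.

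Finally I would pass from the dense class to all of $H^{1,0}$ by a standard approximation argument: establish the inequality first for $f\in C_c^\infty(\mathbb{R}^n)$ (where the pointwise and integration-by-parts manipulations are unambiguous) and then extend to general $f\in H^{1,0}(\mathbb{R}^n)$ by density, using that both sides are continuous under the $H^{1,0}$ norm along an approximating sequence and invoking Fatou's lemma on the left-hand side if necessary. The only genuinely delicate point I anticipate is the borderline case $n=2$, where the critical Sobolev exponent degenerates to $L^\infty$ \emph{without} a valid embedding from $H^{1,0}$; there one must interpolate against a finite but arbitrarily large exponent and carefully verify that optimizing over that exponent recovers precisely the claimed power $\sigma$ rather than a weaker bound. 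For $n\ge 3$ and $n=1$ the computation is entirely routine once the value of $\theta$ is identified, so the main obstacle is purely organizational: ensuring the three dimensional regimes are stitched together with a single uniform constant $C=C(n,p)$.
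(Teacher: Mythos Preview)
The paper does not actually prove this lemma; it simply refers the reader to the textbooks \cite{Frbook, GiGiSa}. Your proposal therefore supplies strictly more than the paper does, and the route you outline---H\"older interpolation between $L^2$ and the Sobolev endpoint, followed by the Sobolev inequality---is the standard textbook argument and is correct for $n\ge 3$ and $n=1$ as written.

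One small point to tighten in the $n=2$ case: the Sobolev embedding $H^{1,0}(\mathbb{R}^2)\hookrightarrow L^q$ only gives $\|f\|_{L^q}\le C_q\|f\|_{H^{1,0}}$, not a pure gradient bound, so interpolating against a fixed large $q$ and ``optimizing over $q$'' does not by itself produce the homogeneous form with the exact exponent $\sigma$. The clean fix is the scaling trick you already alluded to in your bookkeeping step: once you have any inhomogeneous bound $\|f\|_{L^{2p}}\le C(\|\nabla f\|_{L^2}+\|f\|_{L^2})$, apply it to $f_\lambda(x)=f(\lambda x)$ and minimize the right-hand side over $\lambda>0$; scale invariance then forces the exponent $\sigma=n(p-1)/(2p)$ and yields the homogeneous inequality. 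With that adjustment the argument is complete.
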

For the proof, see for example \cite{Frbook, GiGiSa}.

\subsection{Local existence of solutions}
We prove the local existence of solutions for
the equation \eqref{dw} and the system \eqref{eq_vw}, respectively.
To this end, putting
$U(t,x) = \langle x \rangle^m u$,
$U_0(x) = \langle x \rangle^m u_0$
and $U_1 = \langle x \rangle^m u_1$,
we change the problem \eqref{dw} to
\begin{align}
\label{dw2}
	\left\{ \begin{array}{ll}
	U_{tt}+b(t)U_{t}
	=\Delta_{x}U + \tilde{c}(t,x)\cdot \nabla_{x}U + \tilde{d}(t,x)U
		+ \tilde{N}(U,\nabla_{x}U,U_{t}),&
	t>0,x\in\mathbb{R}^n,\\
	U(0,x)=\varepsilon U_0(x),
		\ U_{t}(0,x)=\varepsilon U_1(x),
	&x \in\mathbb{R}^n,
	\end{array}\right.
\end{align}
where
$\tilde{c} = c - 2m \langle x \rangle^{-2}x$,
$\tilde{d} = d - c\cdot (m \langle x \rangle^{-2} x )
- m \langle x \rangle^{-4} (n\langle x \rangle^{2} - (m+2)|x|^2)$
and
\[
	\tilde{N}(U,\nabla_xU, U_t)
	= \langle x \rangle^m
	N \left( \langle x \rangle^{-m}U,
		\langle x\rangle^{-m} \nabla_x U - m\langle x \rangle^{-m-2}x U,
		\langle x \rangle^{-m}U_t \right).
\]
We further put
$\mathcal{U} = {}^{t}(U, U_t)$ and $\mathcal{U}_0 = {}^t (U_0, U_1)$.
Then, the equation \eqref{dw2} is written as
\begin{align}
\label{dw3}
	\left\{ \begin{array}{l}
	\mathcal{U}_t = \mathcal{A}\,\mathcal{U} + \mathcal{N}(\,\mathcal{U}),\\
	\mathcal{U}(0) = \varepsilon\,\mathcal{U}_0,
	\end{array}\right.
\end{align}
where
\[
	\mathcal{A} =
		\left( \begin{array}{cc} 0 & 1\\ \Delta & 0 \end{array} \right),\quad
	\mathcal{N}(\,\mathcal{U})
		= \left( \begin{array}{cc}
			0\\
			-b U_t + \tilde{c}\cdot \nabla_x U + \tilde{d} U + \tilde{N}(U, \nabla_x U, U_t)
			\end{array} \right).
\]
The operator
$\mathcal{A}$ on $H^{1,0}(\mathbb{R}^n) \times L^2(\mathbb{R}^n)$
with the domain
$D (\mathcal{A}) = H^{2,0}(\mathbb{R}^n) \times H^{1,0}(\mathbb{R}^n)$
is $m$-dissipative (see \cite[Proposition 2.6.9]{CaHa}) with dense domain,
and hence, $\mathcal{A}$ generates a contraction semigroup
$e^{t \mathcal{A}}$ on $H^{1,0}(\mathbb{R}^n) \times L^2(\mathbb{R}^n)$
(see \cite[Theorem 3.4.4]{CaHa}).
Thus, we consider the integral form
\begin{align}
\label{mildsol}
	\mathcal{U}(t) = \varepsilon e^{t\mathcal{A}} \mathcal{U}_0
		+ \int_0^t e^{(t-\tau)\mathcal{A}} \mathcal{N}(\,\mathcal{U}(\tau))\,d\tau
\end{align}
of the equation \eqref{dw3}
in
$C([0,T); H^{1,0}(\mathbb{R}^n) \times L^2(\mathbb{R}^n))$.

First, we define the mild and strong solutions and
the lifespan of solutions.

\begin{definition}
We say that $u$ is a mild solution of the Cauchy problem \eqref{dw}
on the interval
$[0,T)$
if
$u$ has the regularity
\begin{align}
\label{reg0}
	u \in C([0,T);H^{1,m}(\mathbb{R}^n))\cap C^1([0,T);H^{0,m}(\mathbb{R}^n)).
\end{align}
and
satisfies the integral equation \eqref{mildsol}
in $C([0,T); H^{1,0}(\mathbb{R}^n)\times L^2(\mathbb{R}^n))$.
We also call $u$ a strong solution of the Cauchy problem \eqref{dw}
on the interval $[0,T)$
if
$u$ has the regularity
\begin{align}
\label{reg}
	u \in C([0,T);H^{2,m}(\mathbb{R}^n))\cap C^1([0,T);H^{1,m}(\mathbb{R}^n))
		\cap C^2([0,T); H^{0,m}(\mathbb{R}^n))
\end{align}
and
satisfies the equation \eqref{dw} in
$C([0,T);H^{0,m}(\mathbb{R}^n))$.
Moreover, we say that
$(v,w)$ defined by \eqref{uvw}
is a mild (resp. strong) solution of the Cauchy problem \eqref{eq_vw}
on the interval $[0,S)$
if $u$ is a mild (resp. strong) solution of \eqref{dw}
on the interval $[0, t(S))$.
We note that if
$(v,w)$ is a mild solution of \eqref{eq_vw} on $[0,S)$,
then
$(v,w)$
has the regularity
\[
	(v,w) \in C([0,S);H^{1,m}(\mathbb{R}^n)\times H^{0,m}(\mathbb{R}^n)),
\]
and if $(v,w)$ is a strong solution of \eqref{eq_vw} on $[0,S)$,
then
$(v,w)$
has the regularity
\begin{align}
\label{vwcls2}
	(v,w) \in C([0,S);H^{2,m}(\mathbb{R}^n)\times H^{1,m}(\mathbb{R}^n))
		\cap C^1([0,S); H^{1,m}(\mathbb{R}^n) \times H^{0,m}(\mathbb{R}^n))
\end{align}
and satisfies
the system \eqref{eq_vw} in
$C([0,S);H^{1,m}(\mathbb{R}^n)\times H^{0,m}(\mathbb{R}^n))$.

We also define the lifespan of the mild solutions
$u$ and $(v,w)$
by
\[
	T(\varepsilon) = \sup\{
		T\in (0,\infty) ; \mbox{there exists a unique  mild solution}\ u\ \mbox{to \eqref{dw}} \}
\]
and
\[
	S(\varepsilon) = \sup\{
		S\in (0,\infty) ;
		\mbox{there exists a unique  mild solution}\ (v,w)\ \mbox{to \eqref{eq_vw}} \},
\]
respectively.
\end{definition}

\begin{proposition}\label{prop_loc}
Under the assumptions (i)--(iv) in the previous section,
there exists $T>0$ depending only on
$\varepsilon \| (u_0, u_1) \|_{H^{1,m}\times H^{0,m}}$
(the size of the initial data) such that
the Cauchy problem \eqref{dw} admits a unique mild solution
$u$.
Also, if
$(u_0, u_1) \in H^{2,m}(\mathbb{R}^n) \times H^{1,m}(\mathbb{R}^n)$
in addition to Assumption (i), then the corresponding mild solution $u$
becomes a strong solution of \eqref{dw}.
Moreover,
if the lifespan
$T(\varepsilon)$
is finite, then $u$ satisfies
$\lim_{t \to T(\varepsilon)} \| (u, u_t)(t) \|_{H^{1,m}\times H^{0,m}} = \infty$.
Furthermore,
for arbitrary fixed time $T_0>0$,
we can extend the solution to the interval $[0,T_0)$
by taking $\varepsilon$ sufficiently small.
\end{proposition}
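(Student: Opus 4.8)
The plan is to establish Proposition \ref{prop_loc} by a standard contraction-mapping argument applied to the integral equation \eqref{mildsol}, but carried out in the weighted space $H^{1,m}(\mathbb{R}^n)\times H^{0,m}(\mathbb{R}^n)$ via the reformulation \eqref{dw2}--\eqref{dw3}. First I would fix the ambient setting: by the quoted facts from \cite{CaHa}, $\mathcal{A}$ generates a contraction semigroup $e^{t\mathcal{A}}$ on $H^{1,0}\times L^2$, so for any initial datum in that space \eqref{mildsol} makes sense. The key observation is that multiplication by $\langle x\rangle^m$ intertwines the original problem with \eqref{dw2}, whose nonlinearity $\mathcal{N}(\mathcal{U})$ consists of the zeroth component zero and a second component that is (a) linear in $U_t$ with coefficient $b(t)$ bounded on $[0,T_0]$, (b) linear in $\nabla_x U$ with coefficient $\tilde c(t,x)$ bounded by \eqref{c} plus the bounded term $2m\langle x\rangle^{-2}x$, (c) linear in $U$ with bounded coefficient $\tilde d$, and (d) the genuinely nonlinear term $\tilde N$. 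Thus I would first record that $\mathcal{N}\colon H^{1,0}\times L^2\to H^{0,0}\times L^2$ wait—more precisely I need $\mathcal{N}(\mathcal{U})\in H^{1,0}\times L^2$ so that Duhamel stays in the energy space; the first component is $0\in H^{1,0}$ automatically, and the second lies in $L^2$ provided $\nabla_x U,U_t\in L^2$ and the nonlinear term $\tilde N(U,\nabla_xU,U_t)\in L^2$.

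The central estimates are therefore the local Lipschitz bounds for $\mathcal{N}$ on balls of $H^{1,0}\times L^2$. For $n=1$ this comes directly from \eqref{lip}: writing $z=(U,U_x-m\langle x\rangle^{-2}xU,U_t)$ componentwise, the hypothesis $p_{i1}>1$, $p_{i2}+p_{i3}\le 1$ and the one-dimensional Sobolev embedding $H^{1,0}(\mathbb{R})\hookrightarrow L^\infty$ give $\|\tilde N(z)-\tilde N(w)\|_{L^2}\le C(R)\|z-w\|_{H^{1,0}\times H^{0,0}}$ on the ball $\|U\|_{H^{1,m}}\le R$; the weight $\langle x\rangle^m$ is harmless because $\tilde N$ is built so that the weights cancel, so one only needs $\langle x\rangle^m u\in H^{1,0}$, i.e.\ $u\in H^{1,m}$. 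For $n\ge 2$ one uses \eqref{lip2} together with the Gagliardo--Nirenberg inequality of Lemma \ref{lem_gn}: since $1+2/n<p\le n/(n-2)$, the exponent $p$ is admissible, so $\||u|^{p-1}\|$-type factors are controlled by $\|\nabla u\|_{L^2}^{\sigma}\|u\|_{L^2}^{1-\sigma}$, yielding $\|N(u)-N(v)\|_{L^2}\le C\|u-v\|_{H^{1,0}}(\|u\|_{H^{1,0}}+\|v\|_{H^{1,0}})^{p-1}$, and again the $\langle x\rangle^m$ weight passes through unscathed by construction of $\tilde N$. With these in hand, the map $\Phi(\mathcal{U})(t)=\varepsilon e^{t\mathcal{A}}\mathcal{U}_0+\int_0^t e^{(t-\tau)\mathcal{A}}\mathcal{N}(\mathcal{U}(\tau))\,d\tau$ is a contraction on a ball of $C([0,T];H^{1,0}\times L^2)$ once $T$ is small depending only on $\varepsilon\|(u_0,u_1)\|_{H^{1,m}\times H^{0,m}}$ (the coefficient bounds on $[0,T]$ are uniform for $T\le 1$, say). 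Its fixed point $\mathcal{U}=(U,U_t)$ gives $u=\langle x\rangle^{-m}U$ with the regularity \eqref{reg0}; uniqueness follows from the same Lipschitz estimate via Gronwall.

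For the strong-solution statement, when $(u_0,u_1)\in H^{2,m}\times H^{1,m}$ I would rerun the contraction in $D(\mathcal{A})=H^{2,0}\times H^{1,0}$, using that $\mathcal{N}$ maps $H^{2,0}\times H^{1,0}$ into itself locally Lipschitz-ly (here the one-dimensional assumption \eqref{lip}, resp.\ the $C^1$ assumption on $N$ in \eqref{N2}, supplies the extra derivative), and then classical semigroup theory (e.g.\ \cite[Theorem 3.4.4]{CaHa}) upgrades the mild solution to a strong one satisfying \eqref{reg} and \eqref{dw} in $C([0,T);H^{0,m})$. The blow-up alternative is the usual dichotomy: if $T(\varepsilon)<\infty$ but $\liminf_{t\to T(\varepsilon)}\|(u,u_t)(t)\|_{H^{1,m}\times H^{0,m}}<\infty$, pick $t_k\to T(\varepsilon)$ with bounded norm, restart the local existence from time $t_k$ with a uniform existence time, and contradict maximality of $T(\varepsilon)$. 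Finally, to extend to a prescribed $[0,T_0)$: the local existence time depends only on $\varepsilon\|(u_0,u_1)\|_{H^{1,m}\times H^{0,m}}$, so it suffices to show the solution norm cannot blow up before $T_0$ when $\varepsilon$ is small; this is not yet available from soft arguments alone, so I would instead invoke the a priori energy bound proved later in the paper (the global estimates of Sections 3--4) on the interval $[0,T_0]$, which controls $\|(u,u_t)(t)\|_{H^{1,m}\times H^{0,m}}$ by $C\varepsilon$ uniformly, and then the blow-up alternative forbids $T(\varepsilon)\le T_0$.

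The main obstacle I anticipate is bookkeeping rather than conceptual: verifying that the weighted conjugation $u\mapsto\langle x\rangle^m u$ genuinely turns the perturbation into a bounded-coefficient lower-order term — i.e.\ that the commutators producing $\tilde c,\tilde d$ are indeed bounded (they are, since $\langle x\rangle^{-2}x$, $\langle x\rangle^{-2}$, $\langle x\rangle^{-4}|x|^2$ are all bounded) and that $\tilde N$ inherits exactly the structural bounds \eqref{N1}/\eqref{N2} with the weight absorbed, so that the Sobolev/Gagliardo--Nirenberg estimates can be applied in the \emph{unweighted} space $H^{1,0}$ to the weighted unknown $U$. Once this reduction is in place, the contraction argument is entirely standard.
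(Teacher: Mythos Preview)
Your treatment of the first three assertions (local existence via contraction on $H^{1,0}\times L^2$, upgrade to a strong solution when the data lie in $D(\mathcal{A})$, and the blow-up alternative) matches the paper's approach essentially line for line; the paper simply cites \cite[Propositions~4.3.3, 4.3.9 and Theorem~4.3.4]{CaHa} rather than rerunning the contraction in $D(\mathcal{A})$, but the content is the same.

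The last assertion, however, is where your proposal breaks down. You suggest invoking ``the a priori energy bound proved later in the paper (the global estimates of Sections~3--4)'' to show that the norm cannot blow up on $[0,T_0]$. This is circular: the a~priori estimate (Proposition~\ref{prop_ap}) is only established for $s\ge s_0$, and the bound on the starting value $E_5(s_0)$ in its proof (see \eqref{este5}) is obtained precisely by citing the extension-to-$[0,T_0]$ statement you are trying to prove. In the paper's logical flow, Proposition~\ref{prop_loc} (via Proposition~\ref{prop_loc2}) is what guarantees the solution reaches $s_0$ in the first place, so you cannot borrow the later estimates here.

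The paper closes this gap by a self-contained argument: it considers the inhomogeneous \emph{linear} problem \eqref{dw4} on the \emph{fixed} interval $[0,T_0]$, records the standard energy bound \eqref{enes1} with constant $C(T_0)$, and then runs a second contraction mapping directly on $C([0,T_0];H^{1,0})\cap C^1([0,T_0];L^2)$ in the ball of radius $2C(T_0)I_0\varepsilon$. The point is that the nonlinear contribution is of order $(C(T_0)I_0\varepsilon)^p$ with $p>1$, so for $\varepsilon$ small (depending on $T_0$) the map is a contraction on the whole interval $[0,T_0]$ at once. No forward reference to Sections~3--4 is needed.
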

From this proposition, we easily have the following.
\begin{proposition}\label{prop_loc2}
Under the assumptions (i)--(iv) in the previous section,
there exists $S>0$ depending only on
$\varepsilon \| (v_0, w_0) \|_{H^{1,m}\times H^{0,m}}$
(the size of the initial data) such that
the Cauchy problem \eqref{eq_vw} admits a unique mild solution
$(v,w)$.
Also, if
$(u_0, u_1) \in H^{2,m}(\mathbb{R}^n) \times H^{1,m}(\mathbb{R}^n)$
in addition to Assumption (i), then the corresponding mild solution $(v,w)$
becomes a strong solution of \eqref{eq_vw}.
Moreover,
if the lifespan
$S(\varepsilon)$
is finite, then $(v,w)$ satisfies
$\lim_{s \to S(\varepsilon)} \| (v,w)(s) \|_{H^{1,m}\times H^{0,m}} = \infty$.
Furthermore,
for arbitrary fixed time $S_0>0$,
we can extend the solution to the interval $[0,S_0]$
by taking $\varepsilon$ sufficiently small.
\end{proposition}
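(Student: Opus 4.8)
The plan is to deduce everything from Proposition \ref{prop_loc} by using the explicit change of variables \eqref{uvw} between $(u,u_t)$ and $(v,w)$, together with the change of time variable $s = \log(B(t)+1)$. First I would observe that this change of variables is, for each fixed $t$ (equivalently each fixed $s$), a \emph{bi-Lipschitz isomorphism} between the relevant function spaces: indeed, for fixed $t$, the maps $u(t,\cdot)\mapsto v(s,\cdot) = e^{ns/2}u(t,e^{s/2}\cdot)$ and $u_t(t,\cdot)\mapsto w(s,\cdot) = b(t)e^{(n+2)s/2}u_t(t,e^{s/2}\cdot)$ are dilations composed with multiplication by the positive constants $e^{ns/2}$, $b(t)e^{(n+2)s/2}$. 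One checks that a parabolic dilation $f(\cdot)\mapsto e^{ns/2}f(e^{s/2}\cdot)$ preserves $L^2(\mathbb{R}^n)$ isometrically and maps $H^{k,m}(\mathbb{R}^n)$ boundedly onto itself with norm depending only on $s$ (and $k,m,n$), and similarly for the inverse; the weight $\langle e^{s/2}y\rangle^m$ versus $\langle y\rangle^m$ contributes a factor controlled by $\max\{1,e^{ms/2}\}$. Hence for $t$ in a bounded interval (equivalently $s$ in a bounded interval), the norms $\|(v,w)(s)\|_{H^{1,m}\times H^{0,m}}$ and $\|(u,u_t)(t)\|_{H^{1,m}\times H^{0,m}}$ are equivalent up to constants depending only on that interval.

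Next I would transfer the notion of solution. By the very definition given in the excerpt, $(v,w)$ is a mild (resp. strong) solution of \eqref{eq_vw} on $[0,S)$ \emph{precisely when} $u$ is a mild (resp. strong) solution of \eqref{dw} on $[0,t(S))$; moreover uniqueness for one problem is equivalent to uniqueness for the other, again because the map is a bijection on solution classes. Therefore $S(\varepsilon) = \log(B(T(\varepsilon))+1)$, i.e. $t(S(\varepsilon)) = T(\varepsilon)$. Given Proposition \ref{prop_loc}, which provides a mild solution $u$ on some $[0,T)$ with $T$ depending only on $\varepsilon\|(u_0,u_1)\|_{H^{1,m}\times H^{0,m}}$, setting $S = \log(B(T)+1)$ gives a unique mild solution $(v,w)$ on $[0,S)$; and since $v_0 = u_0$, $w_0 = b(0)u_1$, the quantity $\varepsilon\|(v_0,w_0)\|_{H^{1,m}\times H^{0,m}}$ is comparable to $\varepsilon\|(u_0,u_1)\|_{H^{1,m}\times H^{0,m}}$, so the dependence is as claimed. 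The regularity upgrade for strong solutions when $(u_0,u_1)\in H^{2,m}\times H^{1,m}$ follows the same way, using that the change of variables also maps the class \eqref{reg} onto the class \eqref{vwcls2}: one must check the time derivatives transform correctly, but this is exactly the computation already recorded in deriving \eqref{eq_vw} from \eqref{dw} (and in Lemma \ref{lem_b0}), so the chain rule in $s$ and $t$ plus $t'(s) = b(t(s))e^s$ closes it.

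For the blow-up alternative, suppose $S(\varepsilon)<\infty$. Then $T(\varepsilon) = t(S(\varepsilon))<\infty$ as well (since $t(\cdot)$ is finite on finite intervals, $B$ being real-analytic and strictly increasing), so by Proposition \ref{prop_loc} we have $\|(u,u_t)(t)\|_{H^{1,m}\times H^{0,m}}\to\infty$ as $t\to T(\varepsilon)$. Using the norm equivalence from the first paragraph on the bounded interval $[\tfrac12 S(\varepsilon), S(\varepsilon))$ — on which the transfer constants are uniformly bounded — together with $b(t(s))$ being bounded above and below there, one concludes $\|(v,w)(s)\|_{H^{1,m}\times H^{0,m}}\to\infty$ as $s\to S(\varepsilon)$. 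Finally, the extension statement: for fixed $S_0>0$, set $T_0 = t(S_0) = B^{-1}(e^{S_0}-1)<\infty$; by the last assertion of Proposition \ref{prop_loc} we can choose $\varepsilon$ small so that the solution $u$ exists on $[0,T_0]$, and then $(v,w)$ exists on $[0,S_0]$. The main obstacle — really the only nontrivial point — is the careful bookkeeping in the first paragraph: verifying that the parabolic dilation together with the weight $\langle x\rangle^m$ and the $t$-dependent factors $b(t)$, $e^{ns/2}$ genuinely gives a bi-Lipschitz equivalence of $H^{1,m}\times H^{0,m}$ norms with constants that are locally bounded in $s$, and that this equivalence is compatible with the integral formulation \eqref{mildsol}; once this is in place, every clause of the proposition is a direct translation of the corresponding clause of Proposition \ref{prop_loc}.
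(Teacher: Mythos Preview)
Your proposal is correct and follows exactly the approach intended in the paper: the paper does not give a separate proof of Proposition~\ref{prop_loc2} at all, merely stating that ``from this proposition, we easily have the following'' after Proposition~\ref{prop_loc}, since by the very \emph{definition} of mild/strong solution for \eqref{eq_vw} the correspondence with \eqref{dw} is built in. Your careful verification of the norm equivalence under the scaling map, with constants locally bounded in $s$, is precisely the bookkeeping that the paper leaves implicit.
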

\begin{proof}[Proof of Proposition \ref{prop_loc}]
By using the assumption (iv), and 
the Sobolev inequality for $n=1$, or
the Gagliardo-Nirenberg inequality for $n\ge 2$
(see Lemma \ref{lem_gn}),
we can see that
$\mathcal{N}(\,\mathcal{U})$
is a locally Lipschitz mapping on $H^{1,0}(\mathbb{R}^n) \times L^2(\mathbb{R}^n)$.
Therefore, by \cite[Proposition 4.3.3]{CaHa},
there exists a unique solution
$\mathcal{U} \in C([0,T) ; H^{1,0}(\mathbb{R}^n) \times L^2(\mathbb{R}^n))$
to the integral equation \eqref{mildsol}.
This shows the existence of a unique mild solution
$u$
to the Cauchy problem \eqref{dw}.

If
$(u_0, u_1)\in H^{2,m}(\mathbb{R}^n) \times H^{1,m}(\mathbb{R}^n)$,
then we have
$\mathcal{U}_0 \in D(\mathcal{A})$,
and hence, \cite[Proposition 4.3.9]{CaHa} implies that
$\mathcal{U} \in C([0,T); D(\mathcal{A}))
\cap C^1([0,T); H^{1,0}(\mathbb{R}^n) \times L^2(\mathbb{R}^n))$
and
$\mathcal{U}$
becomes the strong solution of the equation \eqref{dw3},
namely, $\mathcal{U}$ satisfies the equation \eqref{dw3}
in $C([0,T);H^{1,0}(\mathbb{R}^n) \times L^2(\mathbb{R}^n))$.
Then, by the definition of $\mathcal{U}$,
we conclude that $u$ has the regularity in \eqref{reg} and
satisfies the equation \eqref{dw} in
$C([0,T); H^{0,m}(\mathbb{R}^n))$.
Moreover, employing
\cite[Theorem 4.3.4]{CaHa},
we see that
if the lifespan
$T(\varepsilon)$
is finite, then $u$ satisfies
$\lim_{t \to T(\varepsilon)} \| (u, u_t)(t) \|_{H^{1,m}\times H^{0,m}} = \infty$.

Next, we prove that for any fixed $T_0 > 0$,
the solution $u$ can be extended over the interval $[0,T_0]$
by taking $\varepsilon$ sufficiently small.
To verify this, we reconsider the Cauchy problem \eqref{dw2} and its
inhomogeneous linear version
\begin{align}
\label{dw4}
	\left\{ \begin{array}{ll}
	U_{tt}+b(t)U_{t}
	=\Delta_{x}U + \tilde{c}(t,x)\cdot \nabla_{x}U + \tilde{d}(t,x)U
		+ \tilde{N}(t,x),&
	t>0,x\in\mathbb{R}^n,\\
	U(0,x)=\varepsilon U_0(x),
		\ U_{t}(0,x)=\varepsilon U_1(x),
	&x \in\mathbb{R}^n.
	\end{array}\right.
\end{align}
For $\tilde{N} \in L^1(0,T_0;L^2(\mathbb{R}^n))$,
the existence of a unique solution in the distribution sense
is proved by \cite[Theorem 23.2.2]{Ho}.
We also recall the standard energy estimate (see \cite[Lemma 23.2.1]{Ho})
\begin{align}
\label{enes1}
	\sup_{0 < t <T_0} \| (U,U_t)(t) \|_{H^{1,0}\times L^2}
	\le C(T_0) \left(
		\varepsilon \| (U_0, U_1) \|_{H^{1,0}\times L^2}
		+ \int_0^{T_0} \| \tilde{N}(t) \|_{L^2} dt \right).
\end{align}
We again construct the solution $U$ to \eqref{dw2} in
\[
	K:=
	\left\{ U \in C([0,T_0];H^{1,0}(\mathbb{R}^n)) \cap C^1([0,T_0];L^2(\mathbb{R}^n)) ;
		\sup_{0<t<T_0} \| (U,U_t)(t) \|_{H^{1,0}\times L^2} \le 2 C(T_0) I_0 \varepsilon \right\},
\]
where
$I_0 := \| (U_0, U_1) \|_{H^{1,0} \times L^2}$.
For each $V \in K$,
we define the mapping by $U= \mathcal{M}(V)$,
where $U$ is the solution to \eqref{dw4} with
$\tilde{N}= \tilde{N}(V,\nabla_xV, V_t)$.
Then, by using the Sobolev inequality or the Gagliardo-Nirenberg inequality again
with the estimate \eqref{enes1},
we can see that
\begin{align}
\label{estU0}
	\sup_{0<t<T_0}\| (U,U_t)(t) \|_{H^{1,0}\times L^2}
		\le C(T_0) I_0 \varepsilon
		+ C(T_0) ( 2 C(T_0) I_0 \varepsilon )^p T_0.
\end{align}
Thus, noting $p>1$ and taking $\varepsilon > 0$ sufficiently small, we deduce that
$\mathcal{M}$ maps $K$ to itself.
Furthermore, in the same manner, we easily obtain
\begin{align*}
	\sup_{0<t<T_0}\| (U^1,U^1_t) - (U^2, U^2_t) \|_{H^{1,0}\times L^1}
	&\le C(T_0) (4 C(T_0) I_0 \varepsilon)^{p-1}
			T_0 \sup_{0<t<T_0}\| (V^1,V^1_t) - (V^2, V^2_t) \|_{H^{1,0}\times L^2},
\end{align*}
where $U^j = \mathcal{M}(V_j)\ (j=1,2)$.
Thus, noting $p >1$ again and taking $\varepsilon$ sufficiently small, we see that
$\mathcal{M}$ is a contraction mapping on $K$.
Therefore, by the contraction mapping principle,
we find a unique fixed point $\tilde{U}$ of the mapping $\mathcal{M}$
in the set $K$, and
$\tilde{U}$ satisfies the equation \eqref{dw2} in the distribution sense.
Also, the uniqueness of the solution in the distribution sense to \eqref{dw2}
in the class
$C([0,T_0];H^{1,0}(\mathbb{R}^n)) \cap C^1([0,T_0];L^2(\mathbb{R}^n))$
follows from \eqref{enes1}.
Since the mild solution $U$ constructed before also satisfies the equation
\eqref{dw2} in the distribution sense,
we have $U(t) =\tilde{U}(t)$ for $t\in [0, \min\{ T(\varepsilon), T_0 \} )$.
However, noting that
the estimate \eqref{estU0} implies
$\sup_{0<t<T_0}\| (\tilde{U}, \tilde{U}_t)(t) \|_{H^{1,0}\times L^2}$
is finite, we have $T_0 < T(\varepsilon)$
and this completes the proof.
\end{proof}


\subsection{A priori estimate implies the global existence}
In what follows,
to justify the energy method,
we tacitly assume that
$(u_0, u_1) \in H^{2,m}(\mathbb{R}^n)\times H^{1,m}(\mathbb{R}^n)$,
and the solution
$(v,w)$ is in the class \eqref{vwcls2}.
Therefore, the following calculations make sense.
Once we obtain the desired asymptotic estimate \eqref{sol_asym}
for such a data, we can easily have the same estimate
for general $(u_0, u_1) \in H^{1,m}(\mathbb{R}^n)\times H^{0,m}(\mathbb{R}^n)$
by applying the usual approximation argument.

Let
$(v,w)$
be the local-in-time solution to \eqref{eq_vw} on the interval
$[0,S)$.
By the local existence theorem,
it suffices to show an a priori estimate of solutions.
The first goal of this section is the following a priori estimate:
\begin{proposition}\label{prop_ap}
Under the assumptions (i)--(iv) in the previous section,
there exist constants
$s_0 > 0$,
$\varepsilon_1>0$
and
$C_{\ast}>0$ such that the following holds:
if
$\varepsilon \in (0,\varepsilon_1]$
and
$(v,w)$ is a mild solution of \eqref{dw} on some interval
$[0,S]$ with $S> s_0$,
then $(v,w)$ satisfies
\begin{align}
\label{apri}
	\| v(s) \|_{H^{1,m}}^2 + \frac{e^{-s}}{b(t(s))^2} \| w(s) \|_{H^{0,m}}^2 \le
		C_{\ast} \varepsilon^2 \| (v_0, w_0) \|_{H^{1,m}\times H^{0,m}}^2.
\end{align}
\end{proposition}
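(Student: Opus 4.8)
The plan is a continuity argument built on weighted energy estimates for the system \eqref{eq_vw}, after peeling off the Gaussian. Since $(v,w)$ lies in the class \eqref{vwcls2} the manipulations below are legitimate. Put $G(y)=(4\pi)^{-n/2}e^{-|y|^2/4}$, so that $\Delta_y G+\tfrac y2\cdot\nabla_y G+\tfrac n2G=0$ and $\int_{\mathbb{R}^n}G\,dy=1$, and decompose $v(s,y)=\alpha(s)G(y)+f(s,y)$ with $\alpha(s):=\int_{\mathbb{R}^n}v(s,y)\,dy$, so $\int f\,dy=0$, i.e.\ $\hat f(s,0)=0$. Integrating the first equation of \eqref{eq_vw} gives $\alpha'(s)=\int_{\mathbb{R}^n}w\,dy$, and inserting the second equation yields a scalar ODE for $\int w\,dy$ whose forcing is integrable in $s$ under Assumptions (ii)--(iv) and Lemma \ref{lem_b0b1}; hence $\alpha(s)$ is bounded and convergent (its limit being $\alpha^\ast$ in Theorem \ref{thm}). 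With $\tilde w:=w-\alpha\Delta_y G-\alpha'G$ one has $\int\tilde w\,dy=0$, and $(f,\tilde w)$ satisfies the same system as $(v,w)$ up to source terms built from $G$, $\alpha$, $\alpha'$.

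Next I would carry out the weighted $H^{1,m}$ estimate, testing the first equation of \eqref{eq_vw} against $\langle y\rangle^{2m}v$ and against $-\langle y\rangle^{2m}\Delta_y v$, and the second against $\langle y\rangle^{2m}w$. After integration by parts the transport operator $-\tfrac y2\cdot\nabla_y$ produces $-\tfrac n4\|\cdot\|_{0,m}^2-\tfrac m2\int|\cdot|^2|y|^2\langle y\rangle^{2m-2}dy$; combining with the zero-order coefficients $\tfrac n2$, $\tfrac{n+1}2$ (acting on $\nabla_y v$) and $\tfrac n2+1$, and using $|y|^2\langle y\rangle^{2m-2}=\langle y\rangle^{2m}-\langle y\rangle^{2m-2}$, one is left with genuinely dissipative terms $-c\|\cdot\|_{0,m}^2$ exactly because $m>\tfrac n2+1$ when $n\ge2$ (when $n=1$, where only $m=1$ is available, the $\nabla_y v$ estimate is instead run on the primitive $F(s,y)=\int_{-\infty}^yf(s,z)\,dz$, as in Gallay and Raugel \cite{GaRa98}). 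The cancellation $\int\nabla_y v\cdot\nabla_y w\,\langle y\rangle^{2m}dy=-\int w\,\Delta_y v\,\langle y\rangle^{2m}dy$, up to lower-order weight commutators, couples the two equations, and the coefficient $1$ in front of $w$ in the second equation supplies $-\|w\|_{0,m}^2$; the $\tfrac{e^{-s}}{b(t(s))^2}$-weighted quantities are lower order by Lemma \ref{lem_b0b1}, and the lower-weight remainders $\|v\|_{0,m-1}$, $\|\nabla_y v\|_{0,m-1}$ are absorbed, for large $|y|$ into the dissipation and for bounded $|y|$ into $\|v\|_{L^2}$ and $\|\nabla_y v\|_{L^2}^2\le\|v\|_{L^2}\|\Delta_y v\|_{L^2}$ (with $\|\Delta_y v\|_{L^2}\lesssim\|w\|_{0,m}+\text{errors}$ from the second equation). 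This gives
\[
	\tfrac{d}{ds}\mathcal{E}_1(s)+c\big(\|\nabla_y v\|_{0,m}^2+\|v\|_{0,m}^2+\|w\|_{0,m}^2\big)\le C\|v\|_{L^2}^2+C\phi(s)\,\mathcal{E}_1(s)+(\text{errors from }r),
\]
with $\mathcal{E}_1\sim\|v\|_{H^{1,m}}^2+\tfrac{e^{-s}}{b(t(s))^2}\|w\|_{H^{0,m}}^2$ up to equivalent corrections and $\int_{s_0}^\infty\phi\,ds<\infty$. The only non-absorbed term is the zero-order, low-weight quantity $\|v\|_{L^2}^2$, for which no Poincaré inequality is available on $\mathbb{R}^n$.

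Controlling $\|v\|_{L^2}^2$ is the crux. Its high-frequency part is harmless, $\int_{|\xi|\ge1}|\hat v|^2d\xi\le\|\nabla_y v\|_{L^2}^2\le\|\nabla_y v\|_{0,m}^2$, already on the good side. For the low-frequency part, write $\hat v=\alpha\hat G+\hat f$ and, following the idea of Coulaud \cite{Co14}, fix $\delta\in(0,1)$ and set $\hat F(s,\xi):=|\xi|^{-n/2-\delta}\hat f(s,\xi)$, which is well defined because $\hat f(s,0)=0$ and $f$ is weighted. In Fourier variables the first equation for $f$ becomes $\hat f_s+\tfrac12\xi\cdot\nabla_\xi\hat f=\hat{\tilde w}$ (the $\tfrac n2$-terms cancel), whence
\[
	\hat F_s+\tfrac12\,\xi\cdot\nabla_\xi\hat F+\tfrac12\big(\tfrac n2+\delta\big)\hat F=\widehat W,\qquad\widehat W:=|\xi|^{-n/2-\delta}\hat{\tilde w};
\]
since the transport term contributes $-\tfrac n4\|\hat F\|_{L^2}^2$ under integration by parts, the zero-order coefficient leaves the genuine dissipation $\tfrac\delta2\|\hat F\|_{L^2}^2$. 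Coupling with the Fourier form of the second equation for $\tilde w$ (which brings in $-|\xi|^2\hat F$, and through $\alpha\hat G$ only contributions harmless after the fractional division thanks to $\int\tilde w\,dy=0$), the same energy computation yields bounds on $\|\hat F(s)\|_{L^2}$ and $\||\xi|\hat F(s)\|_{L^2}$. Then $\int_{|\xi|\le1}|\hat f|^2d\xi\le\int_{|\xi|\le1}|\xi|^{n+2\delta}|\hat F|^2d\xi$ is controlled (by $\||\xi|\hat F\|_{L^2}^2$ for $n\ge2$, by $\|\hat F\|_{L^2}^2$ for $n=1$), so $\|v\|_{L^2}^2\le C\big(\alpha(s)^2+\|\nabla_y v\|_{0,m}^2+\|\hat F(s)\|_{L^2}^2+\||\xi|\hat F(s)\|_{L^2}^2\big)$.

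Finally I would form an augmented energy $\mathcal{E}(s)=\mathcal{E}_1(s)+\kappa_1\alpha(s)^2+\kappa_2\big(\|\hat F(s)\|_{L^2}^2+\||\xi|\hat F(s)\|_{L^2}^2+\tfrac{e^{-s}}{b(t(s))^2}\|\widehat W(s)\|_{L^2}^2\big)$ with small $\kappa_1,\kappa_2$ and small cross terms, combine the differential inequalities, absorb the low-order terms into the dissipation, and reach $\tfrac{d}{ds}\mathcal{E}(s)\le C\phi(s)\,\mathcal{E}(s)+C\phi(s)\,\mathcal{E}(s)^{p/2}$ (schematically; in one dimension the nonlinear exponent is governed by $\min_i\{p_{i1}+2p_{i2}+(3-\tfrac{2\beta}{1+\beta})p_{i3}-3\}>0$ rather than $p-1-\tfrac2n>0$), where $\phi\in L^1(s_0,\infty)$ because $\gamma>(1+\beta)/2$, $\nu>1+\beta$, $|b'|\lesssim(1+t)^{-1}b$ and $p>1+2/n$ (using Lemma \ref{lem_b0b1}, the Gagliardo--Nirenberg inequality of Lemma \ref{lem_gn} and Sobolev embedding); the precise bounds on these error terms coming from $r$ are what Section 4 supplies. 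On $[0,s_0]$ the solution is bounded via Proposition \ref{prop_loc2} for $\varepsilon$ small, and a standard continuity argument in $s$ with Gronwall's inequality then gives $\mathcal{E}(s)\le2\mathcal{E}(s_0)\le C_\ast\varepsilon^2\|(v_0,w_0)\|_{H^{1,m}\times H^{0,m}}^2$ on $[s_0,S]$, which is \eqref{apri}. The main obstacle is exactly this coercivity issue: the weighted energy dissipates $\|\nabla_y v\|_{0,m}$ and $\|w\|_{0,m}$ but not the zero-order low-weight norm $\|v\|_{L^2}$, and it is the decomposition $v=\alpha G+f$ together with the fractional-primitive device $\hat F=|\xi|^{-n/2-\delta}\hat f$ that restores the missing dissipation; a secondary difficulty is making the weighted transport commutators close with only $m>n/2+1$ (respectively $m=1$ in one dimension) at hand.
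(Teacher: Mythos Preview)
Your strategy is essentially the paper's: the Gaussian decomposition $v=\alpha\varphi_0+f$, the fractional primitive $\hat F=|\xi|^{-n/2-\delta}\hat f$ for $n\ge2$ (ordinary primitive for $n=1$) to restore the missing low-frequency dissipation on $\|f\|_{L^2}$, and a continuity argument on a combined energy containing $\alpha,\alpha'$. You have correctly located the central obstruction and its cure.

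There is, however, a genuine gap in the logical order. You claim at the outset that the scalar ODE for $\alpha'(s)=\int w\,dy$ has integrable forcing, so that $\alpha$ is bounded and convergent. But the forcing is $\int_{\mathbb R^n}r(s,y)\,dy$, and $r$ in \eqref{r} depends on $w$, $\nabla_yv$, $v$, and the nonlinearity; none of these are controlled before \eqref{apri} is established, so this step is circular. The paper never bounds $\alpha$ in advance: it folds $\alpha$ and $\alpha'$ into the energy from the start (the quantities $E_3$ and $E_5$), derives a closed differential inequality for the combined object, and runs the bootstrap on $M(s)=\sup_{s_0\le\tau\le s}E_5(\tau)$; convergence of $\alpha(s)$ is deduced only \emph{a posteriori} in \S3.9. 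Your final $\mathcal E$ with $\kappa_1\alpha^2$ is the right idea, but then the earlier assertion should be removed, not used.

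Relatedly, you must run the weighted estimate on the $(f,\tilde w)$ system (the paper's \eqref{eq_fg}), not on \eqref{eq_vw} for $(v,w)$ as you write. Testing against $\langle y\rangle^{2m}v$ produces $\alpha(s)^2$ on the right-hand side with an $O(1)$ coefficient, since $v$ carries the non-decaying component $\alpha\varphi_0$; this cannot be absorbed into $\kappa_1\alpha^2$ by Gronwall with an integrable $\phi$. Working with $(f,\tilde w)$ instead, the source terms inherited from $\alpha,\alpha'$ all carry the decaying prefactor $e^{-s}/b(t(s))^2$ (see \eqref{h}) or arrive through $r$, whence the factors $e^{-\lambda_0 s}$, $e^{-\lambda_1 s}$ of Lemma \ref{lem_rema}; then, and only then, does the differential inequality $\tfrac{d}{ds}\mathcal E\le C\phi\,\mathcal E+C\phi\,\mathcal E^{p/2}$ close. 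A minor further point: your detour through $\|\nabla_y v\|_{L^2}^2\le\|v\|_{L^2}\|\Delta_y v\|_{L^2}$ and the second equation would bring $w_s$ into play, which is not tracked by the energy; the paper avoids this entirely by building the cross terms $\int fg$ and $\int FG$ (resp.\ $\int\hat F\bar{\hat G}$) into $E_0,E_1,E_2$ so that the coupling cancels algebraically.
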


Before proving the above proposition, we show that
Propositions \ref{prop_loc2} and \ref{prop_ap} imply
the global existence of solutions for small $\varepsilon$.
\begin{proof}[Proof of global existence part of Theorem \ref{thm}] 
First, we note that
Proposition \ref{prop_loc2} guarantees that
there exits $\varepsilon_2 > 0$ such that
the mild solution $(v,w)$ uniquely exists on the interval
$[0,s_0]$ for
$\varepsilon \in (0,\varepsilon_2]$,
where $s_0$ is the constant described in Proposition \ref{prop_ap}.
In particular, we have
$S(\varepsilon) > s_0$
for $\varepsilon \in (0,\varepsilon_2]$.
Let $\varepsilon_0 := \min\{ \varepsilon_1, \varepsilon_2 \}$,
where $\varepsilon_1$ is the constant described in Proposition \ref{prop_ap}.
Then, we have $S(\varepsilon) = \infty$ for $\varepsilon \in (0, \varepsilon_0]$.
Indeed, suppose that
$S(\varepsilon_{\ast}) < \infty$
for some $\varepsilon_{\ast} \in (0, \varepsilon_0]$
and let
$(v,w)$ be the corresponding mild solution of \eqref{eq_vw}.
Applying Proposition \ref{prop_ap}, we have
the a priori estimate \eqref{apri} with $\varepsilon = \varepsilon_{\ast}$.
On the other hand, Proposition \ref{prop_loc2} also implies
\[
	\lim_{s \to S(\varepsilon_{\ast}) } \| (v,w)(s) \|_{H^{1,m}\times H^{0,m}} = \infty.
\]
However, it contradicts the a priori estimate \eqref{apri}.
Thus, we have $S(\varepsilon)=\infty$ for $\varepsilon \in (0,\varepsilon_0]$.
\end{proof}

\subsection{Spectral decomposition}
In the following, we prove the a priori estimate \eqref{apri}
in Proposition \ref{prop_ap}.
At first, we decompose
$v$ and $w$ into the leading terms and the remainder terms, respectively.

Let $\alpha(s)$ be
\begin{align}
\label{alpha}
	\alpha(s) = \int_{\mathbb{R}^n}v(s,y)dy.
\end{align}
Since $v(s) \in H^{1, m}(\mathbb{R}^n)$ for each $s\in [0,S)$ and $m> n/2$,
$\alpha(s)$ is well-defined.
We also put
\begin{align*}
	\varphi_0(y) = (4\pi)^{-n/2} \exp \left( -\frac{|y|^2}{4} \right).
\end{align*}
Then, it is easily verified that
\begin{align}
\label{varphi0_int}
	\int_{\mathbb{R}^n}\varphi_0(y) dy = 1
\end{align}
and
\begin{align}
\label{phi_eq}
	\Delta \varphi_0 = -\frac{y}{2}\cdot \nabla_y\varphi_0-\frac{n}{2}\varphi_0.
\end{align}
We also put
\begin{align*}
	\psi_0(y) = \Delta \varphi_0(y).
\end{align*}
We decompose $v, w$ as
\begin{align}
\label{sp_de_vw}
	\begin{array}{l}
	\displaystyle v(s,y) = \alpha(s) \varphi_0(y) + f(s,y),\\[5pt]
	\displaystyle w(s,y) = \frac{d\alpha}{ds}(s) \varphi_0(y) + \alpha(s)\psi_0(y) + g(s,y).
	\end{array}
\end{align}
We shall prove that $f,g$ can be regarded as remainder terms.

First, we note the following lemma.
\begin{lemma}\label{lem_alpha}
We have
\begin{align}
\label{alpha_dt}
	\frac{d\alpha}{ds}(s) &= \int_{\mathbb{R}^n}w(s,y)dy,\\
\label{alpha_ddt}
	\frac{e^{-s}}{b(t(s))^2}\frac{d^2\alpha}{ds^2}(s)
	&= \frac{e^{-s}}{b(t(s))^2} \frac{d\alpha}{ds}(s)
		- \frac{d\alpha}{ds}(s) + \int_{\mathbb{R}^n}r(s,y)dy,
\end{align}
where $r$ is defined by \eqref{r}.
\end{lemma}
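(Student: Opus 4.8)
The plan is to establish Lemma~\ref{lem_alpha} by integrating the system \eqref{eq_vw} over $\mathbb{R}^n$ and tracking the boundary terms carefully. The identity \eqref{alpha_dt} should come directly from the first equation of \eqref{eq_vw}. First I would differentiate \eqref{alpha} under the integral sign (justified since $(v,w)$ is in the class \eqref{vwcls2}, so $v_s(s) \in H^{1,m} \subset L^1$ because $m > n/2$) to get $\frac{d\alpha}{ds} = \int_{\mathbb{R}^n} v_s \, dy$. Then substituting $v_s = \frac{y}{2}\cdot\nabla_y v + \frac{n}{2} v + w$ from the first equation of \eqref{eq_vw}, I would observe that $\int_{\mathbb{R}^n} \left( \frac{y}{2}\cdot\nabla_y v + \frac{n}{2} v \right) dy = 0$: indeed $\frac{y}{2}\cdot\nabla_y v + \frac{n}{2} v = \frac12 \operatorname{div}_y( y\, v )$, which integrates to zero provided the decay of $v$ is good enough to kill the boundary term at infinity. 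This is where the weight $m > n/2$ (and $m=1$ when $n=1$) is used: $v(s) \in H^{0,m}$ forces $|y|^{n} v(s,y) \to 0$ along a sequence of radii, so the divergence theorem gives no contribution. Hence $\frac{d\alpha}{ds} = \int_{\mathbb{R}^n} w \, dy$, which is \eqref{alpha_dt}.

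For \eqref{alpha_ddt}, I would differentiate \eqref{alpha_dt} once more in $s$ to obtain $\frac{d^2\alpha}{ds^2} = \int_{\mathbb{R}^n} w_s \, dy$, then use the second equation of \eqref{eq_vw} to solve for $w_s$:
\[
	\frac{e^{-s}}{b(t(s))^2} w_s = \frac{e^{-s}}{b(t(s))^2}\left( \frac{y}{2}\cdot\nabla_y w + \left(\frac n2 + 1\right) w \right) - w + \Delta_y v + r(s,y).
\]
Multiplying the equation $\frac{d^2\alpha}{ds^2} = \int w_s\,dy$ by $\frac{e^{-s}}{b(t(s))^2}$ and integrating the right-hand side term by term: the term $\frac{y}{2}\cdot\nabla_y w + (\frac n2+1)w = \frac12\operatorname{div}_y(y\,w) + w$ integrates to $\int_{\mathbb{R}^n} w\,dy = \frac{d\alpha}{ds}$ (again the divergence part vanishes by the decay of $w \in H^{0,m}$), the term $-w$ integrates to $-\frac{d\alpha}{ds}$, the term $\Delta_y v = \operatorname{div}_y(\nabla_y v)$ integrates to zero since $\nabla_y v(s) \in H^{0,m}$ decays, and $r$ integrates to $\int_{\mathbb{R}^n} r(s,y)\,dy$. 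Collecting these gives exactly \eqref{alpha_ddt}.

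The one genuinely delicate point — the main obstacle — is justifying that all the boundary terms from the integrations by parts vanish and that differentiation under the integral sign is legitimate. Strictly speaking, $(v,w)$ in class \eqref{vwcls2} only gives $H^{2,m}\times H^{1,m}$ regularity, so I would check that $\operatorname{div}_y(y\,v)$, $\operatorname{div}_y(\nabla_y v)$ and $\operatorname{div}_y(y\,w)$ are all in $L^1(\mathbb{R}^n)$ with the correct decay; since $m > n/2$ (resp. $m=1$ for $n=1$), the weighted $L^2$ bounds combined with Cauchy-Schwarz against $\langle y\rangle^{-m}$ give the needed $L^1$ control and, via the standard argument choosing a sequence of radii $R_j \to \infty$ along which the spherical boundary integrals tend to zero, the vanishing of all boundary contributions. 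The integrability of $r(s,\cdot)$ needs the assumptions \eqref{b0}, \eqref{c}, \eqref{N1}/\eqref{N2} and the Gagliardo-Nirenberg inequality (Lemma~\ref{lem_gn}) to bound the nonlinear term of \eqref{r} in $L^1$; this is routine given the decay already built into the hypotheses. Everything else is bookkeeping with the explicit formula \eqref{r}.
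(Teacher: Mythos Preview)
Your proposal is correct and follows essentially the same approach as the paper: differentiate $\alpha$ under the integral sign, substitute from the system \eqref{eq_vw}, and use that the drift terms $\tfrac{y}{2}\cdot\nabla_y v + \tfrac{n}{2}v$, $\tfrac{y}{2}\cdot\nabla_y w + \tfrac{n}{2}w$, and $\Delta_y v$ integrate to zero by the weighted decay $m>n/2$. You spell out the divergence-form identities and the justification of the boundary terms more carefully than the paper does, but the argument is the same.
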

\begin{proof}
Noting
$v \in C^1 ([0,S);H^{1,m}(\mathbb{R}^n))$, $w \in C([0,S);H^{0,m}(\mathbb{R}^n))$ and $m>n/2$,
we immediately obtain \eqref{alpha_dt} from
\begin{align*}
	\frac{d\alpha}{ds}(s)= \int_{\mathbb{R}^n} v_s(s,y)dy
	= \int_{\mathbb{R}^n} \left( \frac{y}{2}\cdot \nabla_y v + \frac{n}{2}v + w \right) dy
	= \int_{\mathbb{R}^n} w (s,y)dy.
\end{align*}
Next, by the regularity \eqref{vwcls2},
we see that
$\frac{d\alpha}{ds}(s) \in C^1([0,S); \mathbb{R})$.
Differentiating
$\frac{d\alpha}{ds}(s)$
again and using the second equation of \eqref{eq_vw}, we have 
\begin{align*}
	\frac{e^{-s}}{b(t(s))^2}\frac{d^2\alpha}{ds^2}(s)
	&= \frac{e^{-s}}{b(t(s))^2}\int_{\mathbb{R}^n}w_s (s,y)dy\\
	&= \frac{e^{-s}}{b(t(s))^2}\int_{\mathbb{R}^n}
		\left( \frac{y}{2}\cdot \nabla_yw+\left(\frac{n}{2}+1\right)w \right) dy
		- \int_{\mathbb{R}^n}wdy + \int_{\mathbb{R}^n}\Delta_y v dy + \int_{\mathbb{R}^n}r dy\\
	&= \frac{e^{-s}}{b(t(s))^2}\int_{\mathbb{R}^n}w dy
		- \int_{\mathbb{R}^n}wdy+ \int_{\mathbb{R}^n}r dy.
\end{align*}
Thus, we finish the proof.
\end{proof}

Next, we consider the remainder term $(f,g)$.
Since $f$ and $g$ are defined by \eqref{sp_de_vw}, and
we assumed that
$(v,w)$ has the regularity in \eqref{vwcls2},
so is $(f,g)$:
\begin{align}
\label{fgcls2}
	(f,g) \in C([0,S);H^{2,m}(\mathbb{R}^n)\times H^{1,m}(\mathbb{R}^n))
		\cap C^1([0,S); H^{1,m}(\mathbb{R}^n) \times H^{0,m}(\mathbb{R}^n)).
\end{align}
Therefore, from the system \eqref{eq_vw} and the equation \eqref{phi_eq},
we see that $f$ and $g$ satisfy the following system:
\begin{align}
\label{eq_fg}
	\left\{\begin{array}{ll}
	\displaystyle
	f_s - \frac{y}{2}\cdot \nabla_yf-\frac{n}{2}f = g,&s>0, y\in\mathbb{R}^n,\\[5pt]
	\displaystyle
	\frac{e^{-s}}{b(t(s))^2}\left( g_s - \frac{y}{2}\cdot\nabla_y g -\left(\frac{n}{2}+1\right) g \right)
		+ g = \Delta_y f + h,&s>0,y\in\mathbb{R}^n,\\[5pt]
	f(0,y) = v(0,y)-\alpha(0)\varphi_0(y),&y\in\mathbb{R}^n,\\[5pt]
	g(0,y) = w(0,y)-\dot{\alpha}(0)\varphi_0(y)-\alpha(0)\psi_0(y),
		&y\in\mathbb{R}^n,
	\end{array}\right.
\end{align}
where $h$ is given by
\begin{align}
\nonumber
	h(s,y) &= \frac{e^{-s}}{b(t(s))^2}
		\left( -2 \frac{d\alpha}{ds}(s) \psi_0(y)
		+\alpha(s)
		\left(\frac{y}{2}\cdot\nabla_y\psi_0(y)
			+\left(\frac{n}{2}+1\right)\psi_0(y) \right) \right)\\
\label{h}
		&\quad + r(s,y) - \left(\int_{\mathbb{R}^n} r(s,y) dy \right) \varphi_0(y).
\end{align}
Moreover, from \eqref{alpha}, \eqref{varphi0_int} and \eqref{alpha_dt}, it follows that
\begin{align}
\label{fg_int}
	\int_{\mathbb{R}^n}f(s,y)dy = \int_{\mathbb{R}^n}g(s,y)dy = 0.
\end{align}
We also notice that the condition \eqref{fg_int} implies
\begin{align}
\label{h_int}
	\int_{\mathbb{R}^n} h(s,y) dy = 0.
\end{align}
We note that it suffices to show a priori estimates of
$f$, $g$, $\alpha$ and $\frac{d\alpha}{ds}$
for the proof of global existence of solutions to the system \eqref{eq_vw}.
Therefore, hereafter, we consider the system \eqref{eq_fg} instead of \eqref{eq_vw}.

\subsection{Energy estimates for $n=1$}
To obtain the decay estimates for $f,g$, we introduce
\begin{align}
\label{1_FG}
	F(s,y) = \int_{-\infty}^y f(s,z)dz,\quad
	G(s,y) = \int_{-\infty}^y g(s,z)dz.
\end{align}
From the following lemma and the condition \eqref{fg_int},
we see that $F,G\in C([0,S); L^2(\mathbb{R}))$. 

\begin{lemma}[Hardy-type inequality]\label{lem_hardy}
Let $f=f(y)$ belong to $H^{0,1}(\mathbb{R})$ and satisfy
$\int_{\mathbb{R}}f(y)dy = 0$,
and let $F(y) = \int_{-\infty}^y f(z)dz$.
Then it holds that
\begin{align}
\label{hardy}
	\int_{\mathbb{R}}F(y)^2 dy \le 4 \int_{\mathbb{R}}y^2 f(y)^2 dy.
\end{align}
\end{lemma}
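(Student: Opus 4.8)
The plan is to establish \eqref{hardy} first for smooth, compactly supported $f$ with zero mean—where every integration by parts is legitimate—and then pass to a general $f\in H^{0,1}(\mathbb{R})$ by density. A preliminary observation makes both steps clean: $H^{0,1}(\mathbb{R})\hookrightarrow L^1(\mathbb{R})$, since by Cauchy--Schwarz $\int_{\mathbb{R}}|f|\,dy\le \bigl\|(1+|y|)^{-1}\bigr\|_{L^2}\,\bigl\|(1+|y|)f\bigr\|_{L^2}<\infty$. Hence $F$ is a well-defined bounded continuous function, and the hypothesis $\int_{\mathbb{R}}f\,dy=0$ yields the two equivalent representations $F(y)=\int_{-\infty}^{y}f(z)\,dz=-\int_{y}^{\infty}f(z)\,dz$; in particular, for $f\in C_0^\infty(\mathbb{R})$ with zero mean, $F$ is smooth and compactly supported.

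For such an $f$, I would simply integrate by parts on the whole line: taking $F(y)^2$ and $dy$ as the two factors,
\[
\int_{\mathbb{R}} F(y)^2\,dy
= \bigl[\, y F(y)^2 \,\bigr]_{-\infty}^{\infty} - 2\int_{\mathbb{R}} y F(y) f(y)\,dy
= - 2\int_{\mathbb{R}} y F(y) f(y)\,dy,
\]
where the boundary term vanishes because $F$ has compact support (this is precisely where the zero-mean condition is used) and $F'=f$. Applying Cauchy--Schwarz to the right-hand side gives
\[
\int_{\mathbb{R}} F(y)^2\,dy \le 2\Bigl(\int_{\mathbb{R}} F(y)^2\,dy\Bigr)^{1/2}\Bigl(\int_{\mathbb{R}} y^2 f(y)^2\,dy\Bigr)^{1/2},
\]
and dividing by $\bigl(\int_{\mathbb{R}} F^2\bigr)^{1/2}$ (or noting the inequality is trivial if this vanishes) yields $\int_{\mathbb{R}}F^2\,dy\le 4\int_{\mathbb{R}}y^2 f^2\,dy$. (Equivalently one may split into $(-\infty,0]$ and $[0,\infty)$ and use the two representations of $F$ separately; this is sometimes preferred but is not needed here.)

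For general $f\in H^{0,1}(\mathbb{R})$ with $\int_{\mathbb{R}}f\,dy=0$, I would approximate by $f_j\in C_0^\infty(\mathbb{R})$ with $\int f_j\,dy=0$ and $f_j\to f$ in $H^{0,1}(\mathbb{R})$: first truncate and mollify to get $g_j\in C_0^\infty$ with $g_j\to f$ in $H^{0,1}$, then set $f_j=g_j-\bigl(\int g_j\,dy\bigr)\phi$ for a fixed $\phi\in C_0^\infty$ with $\int\phi\,dy=1$, which restores the zero mean while preserving the convergence because $\int g_j\,dy\to\int f\,dy=0$ by the $L^1$-embedding. Applying the inequality already proved to $f_j-f_k$ shows $\{F_j\}$ is Cauchy in $L^2(\mathbb{R})$, while $f_j\to f$ in $L^1$ forces $F_j\to F$ uniformly; hence $F_j\to F$ in $L^2$, and passing to the limit in $\int F_j^2\,dy\le 4\int y^2 f_j^2\,dy$ gives \eqref{hardy}. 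The only genuinely delicate point is the vanishing of the boundary term in the integration by parts—the reason the argument is organized around the dense class $C_0^\infty$—and, in the limiting step, the identification of the $L^2$-limit of $F_j$ with $F$; once these are handled the estimate is one application of Cauchy--Schwarz and the constant $4$ is sharp.
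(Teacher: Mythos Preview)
Your proof is correct and follows essentially the same approach as the paper: integrate by parts on $\int_{\mathbb{R}}F^2\,dy$ for $f\in C_0^\infty(\mathbb{R})$ with zero mean to obtain $-2\int_{\mathbb{R}}yFf\,dy$, then estimate and approximate. The only cosmetic difference is that the paper uses Young's inequality $-2yFf\le 2y^2f^2+\tfrac12 F^2$ and absorbs, whereas you apply Cauchy--Schwarz and divide; your approximation step is also spelled out in more detail than the paper's one-line reference to ``appropriate approximations.''
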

\begin{proof}
First, we prove \eqref{hardy} when $f\in C_0^{\infty}(\mathbb{R})$.
In this case $\int_{\mathbb{R}}f(y)dy = 0$ leads to
$F\in C_0^{\infty}(\mathbb{R})$.
Therefore, we apply the integration by parts and have
\begin{align*}
	\int_{\mathbb{R}}F(y)^2 dy
	= -2\int_{\mathbb{R}}yF(y)f(y)dy
	\le 2\int_{\mathbb{R}} y^2 f(y)^2 dy + \frac{1}{2}\int_{\mathbb{R}} F(y)^2 dy.
\end{align*}
Thus, we obtain \eqref{hardy}.
For general $f\in H^{0,1}(\mathbb{R})$ satisfying $\int_{\mathbb{R}}f(y)dy=0$,
we can easily prove \eqref{hardy} by appropriately approximations.
\end{proof}

Moreover, by the regularity assumption \eqref{fgcls2} on $(f,g)$,
we see that
\begin{align}
\label{cfcgcls2n1}
	(F,G) \in C([0,S);H^{3,0}(\mathbb{R})\times H^{2,0}(\mathbb{R}))
		\cap C^1([0,S); H^{2,0}(\mathbb{R}) \times H^{1,0}(\mathbb{R})).
\end{align}
Since $f$ and $g$ satisfy the equation \eqref{eq_fg}, we can show that
$F$ and $G$ satisfy the following system:
\begin{align}
\label{eq_FG}
	\left\{\begin{array}{ll}
	\displaystyle F_s-\frac{y}{2}F_y = G,&s>0,y\in\mathbb{R},\\[5pt]
	\displaystyle
	\frac{e^{-s}}{b(t(s))^2}\left( G_s - \frac{y}{2}G_y -G \right) + G
	= F_{yy} + H,&s>0, y\in \mathbb{R},\\[5pt] 
	\displaystyle F(0,y) = \int_{-\infty}^{y}f(0,z)dz,\ 
	G(0,y) = \int_{-\infty}^{y}g(0,z)dz, &y\in \mathbb{R},
	\end{array}\right.
\end{align}
where
\begin{align}
\label{H}
	H(s,y) = \int_{-\infty}^yh(s,z)dz.
\end{align}

We define the following energy.
\begin{align*}
	E_0(s) &= \int_{\mathbb{R}} \left( \frac{1}{2}\left( F_y^2 + \frac{e^{-s}}{b(t(s))^2}G^2 \right)
		+ \frac{1}{2}F^2 + \frac{e^{-s}}{b(t(s))^2} FG \right) dy,\\
	E_1(s) &= \int_{\mathbb{R}} \left( \frac{1}{2} \left( f_y^2 + \frac{e^{-s}}{b(t(s))^2}g^2 \right)
		+ f^2 + 2\frac{e^{-s}}{b(t(s))^2}fg \right)dy,\\
	E_2(s) & = \int_{\mathbb{R}} y^2 \left[ \frac{1}{2} \left( f_y^2 + \frac{e^{-s}}{b(t(s))^2}g^2 \right)
		+ \frac{1}{2} f^2 + \frac{e^{-s}}{b(t(s))^2}fg \right] dy.
\end{align*}
By using Lemma \ref{lem_b0b1},
the following equivalents are valid for
$s\ge s_1$
with sufficiently large
$s_1>0$.
\begin{align}
\nonumber
	E_0(s) & \sim \int_{\mathbb{R}}
		\left( F_y^2 + \frac{e^{-s}}{b(t(s))^2} G^2 + F^2 \right) dy,\\
\label{equi1}
	E_1(s) & \sim \int_{\mathbb{R}}
		\left( f_y^2 + \frac{e^{-s}}{b(t(s))^2} g^2 + f^2 \right) dy,\\
\nonumber
	E_2(s) &\sim \int_{\mathbb{R}}
		y^{2} \left[ f_y^2 + \frac{e^{-s}}{b(t(s))^2} g^2 + f^2 \right] dy.
\end{align}

Next, we prove the following energy identity.
\begin{lemma}\label{lem_en0}
We have
\begin{align}
\label{e0n1}
	\frac{d}{ds}E_0(s)
	+\frac{1}{2}E_0(s) + L_0 (s)
	= R_0 (s),
\end{align}
where
\begin{align*}
	L_0(s) &= \int_{\mathbb{R}}\left( \frac{1}{2}F_y^2 + G^2 \right)dy,\\
	R_0(s) &= \frac{3}{2}\frac{e^{-s}}{b(t(s))^2}\int_{\mathbb{R}}G^2 dy 
		- \frac{1}{b(t(s))^2} \frac{db}{dt}(t(s))
				\int_{\mathbb{R}}\left( G^2 + 2FG \right)dy
		+ \int_{\mathbb{R}} (F+G)H dy.
\end{align*}
Moreover, we have
\begin{align}
\label{e1n1}
	\frac{d}{ds}E_1(s) + \frac{1}{2}E_1(s) + L_1(s) = R_1(s),
\end{align}
where
\begin{align*}
	L_1(s) &= \int_{\mathbb{R}}\left( f_y^2 + g^2 \right) dy
	- \int_{\mathbb{R}}f^2 dy,\\
	R_1(s) &= 3\frac{e^{-s}}{b(t(s))^2}\int_{\mathbb{R}}g^2 dy
	+ 2\frac{e^{-s}}{b(t(s))^2}\int_{\mathbb{R}}fg dy
	- \frac{1}{b(t(s))^2} \frac{db}{dt}(t(s)) \int_{\mathbb{R}}(g^2+4fg)dy
	+ \int_{\mathbb{R}} \left(2f+g\right) h dy.
\end{align*}
Furthermore, we have 
\begin{align}
\label{e2n1}
	\frac{d}{ds}E_2(s) + \frac{1}{2}E_2(s)
		+ L_2(s) =R_2(s),
\end{align}
where
\begin{align*}
	L_2(s) &= \int_{\mathbb{R}}y^2 \left( \frac{1}{2}f_y^2 + g^2 \right)dy
		+ 2\int_{\mathbb{R}}y f_y \left( f+ g \right) dy,\\
	R_2(s) &= \frac{3}{2}\frac{e^{-s}}{b(t(s))^2}\int_{\mathbb{R}}y^2  g^2 dy
		-\frac{1}{b(t(s))^2} \frac{db}{dt}(t(s)) \int_{\mathbb{R}}y^2 (2f+g)g dy
		+\int_{\mathbb{R}}y^2 (f+g)hdy.
\end{align*}
\end{lemma}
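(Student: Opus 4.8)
The plan is to derive all three identities by the same mechanism: differentiate each energy in $s$, substitute the evolution equations, integrate by parts in $y$, and then sort the resulting quadratic terms into a dissipative part (absorbed into $\tfrac12 E_i + L_i$) and a perturbative part (which becomes $R_i$). I would carry this out for $E_0$ first and then indicate the modifications needed for $E_1$ and $E_2$.

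Write $k(s) = e^{-s}/b(t(s))^2$ for brevity. By Lemma~\ref{lem_b0},
\[
	\frac{d}{ds}k(s) = -k(s) - \frac{2}{b(t(s))^2}\frac{db}{dt}(t(s)),
\]
so the time derivative of the coefficient produces exactly the $k(s)$-dependent and $\frac{db}{dt}$-dependent terms that appear in $R_0$. Using the regularity \eqref{cfcgcls2n1} one may differentiate $E_0$ under the integral sign; replacing $F_s$, $F_{ys}$ and $G_s$ by means of the system \eqref{eq_FG} (noting that the primitive $F$ solves $F_s - \tfrac{y}{2}F_y = G$, with no zeroth-order term) gives an integrand that is a combination of $F_y^2, F^2, G^2, FG, F_yG_y, \tfrac{y}{2}FG_y$ together with the forcing terms $(F+G)H$. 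Two cancellations are the crux: the $\int F_yG_y\,dy$ contributions coming from $\int F_yF_{ys}\,dy$ and from $\int F_{yy}G\,dy$ cancel, and the $\pm k(s)\int \tfrac{y}{2}FG_y\,dy$ contributions coming from $\int kF_sG\,dy$ and from $\int kG_sF\,dy$ cancel; the remaining $\tfrac{y}{2}(\cdot)_y$ integrals are reduced by one further integration by parts, the boundary terms at $\pm\infty$ vanishing thanks to \eqref{cfcgcls2n1}. Matching the coefficients of $F_y^2, F^2, G^2, FG$ against those of $\tfrac12 E_0 + L_0 - R_0$ then yields \eqref{e0n1}.

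For \eqref{e1n1} the computation has the same structure; the only new feature is the zeroth-order terms $\tfrac12 f$ and $\tfrac32 g$ in \eqref{eq_fg}, which shift a few numerical coefficients. The weights $1$ and $2$ in front of $f^2$ and $fg$ in $E_1$ are chosen precisely so that the output again takes the form $\tfrac12 E_1 + L_1 - R_1$ with $L_1$ as stated; note that $L_1$ is not sign-definite, but its control is deferred and will rest on the equivalences \eqref{equi1} and Lemma~\ref{lem_hardy}, so nothing further is needed at this stage. For \eqref{e2n1} one runs the same argument against the weight $y^2$: each integration by parts now also acts on $\partial_y(y^2)=2y$, leaving an extra first-order remainder, and precisely these remainders assemble into the cross term $2\int y f_y(f+g)\,dy$ in $L_2$.

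The only real difficulty is bookkeeping — carrying a dozen-odd quadratic terms and their coefficients through several integrations by parts and checking that the two cancellations above (and their weighted counterparts for $E_2$) genuinely occur. There is no analytic obstacle: every manipulation is legitimate because $(F,G)$ and $(f,g)$ possess the regularity and spatial decay recorded in \eqref{cfcgcls2n1} and \eqref{fgcls2}, so all boundary terms vanish, all integrals converge, and differentiation under the integral sign is justified.
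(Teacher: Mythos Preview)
Your proposal is correct and follows essentially the same route as the paper: differentiate each constituent of $E_0$ in $s$, substitute the equations \eqref{eq_FG}, integrate by parts (justified by the regularity \eqref{cfcgcls2n1}), and collect terms, then indicate that \eqref{e1n1} and \eqref{e2n1} are handled by the same mechanism. The paper organizes the computation by treating the four building blocks $\tfrac12\int F^2$, $k\int FG$, $\tfrac12\int F_y^2$, $\tfrac12 k\int G^2$ separately and then summing, whereas you describe the global strategy and single out the two cancellations; but the content is the same.
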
%
\begin{proof}%
The proofs of \eqref{e1n1} and \eqref{e2n1} are the almost same as
that of \eqref{e0n1},
and we only prove \eqref{e0n1}.
We calculate the derivatives of each term of $E_0(s)$.
First, we have
\begin{align*}
	\frac{d}{ds} \left[ \frac{1}{2} \int_{\mathbb{R}} F^2 dy \right] 
	&= \int_{\mathbb{R}} F F_s dy \\
	&= \int_{\mathbb{R}} F \left( \frac{y}{2}F_y + G \right) dy\\
	&= \int_{\mathbb{R}} \left( \left( \frac{y}{4}F^2 \right)_y - \frac{1}{4}F^2 +FG \right) dy \\
	&= -\frac{1}{4} \int_{\mathbb{R}} F^2 dy + \int_{\mathbb{R}} FG dy.
\end{align*}
Here we have used that
$\frac{y}{2} F F_y \in L^1(\mathbb{R})$,
which enables us to justify the integration by parts.
By Lemma \ref{lem_b0}, we also have
\begin{align*}
	\frac{d}{ds}\left[ \frac{e^{-s}}{b(t(s))^2} \int_{\mathbb{R}} FG dy \right]
	&= -\frac{2}{b(t(s))^2} \frac{db}{dt}(t(s)) \int_{\mathbb{R}}FGdy
		-\frac{e^{-s}}{b(t(s))^2}\int_{\mathbb{R}}FGdy
		+ \frac{e^{-s}}{b(t(s))^2}\int_{\mathbb{R}}(F_sG+FG_s)dy\\
	&= -\frac{2}{b(t(s))^2} \frac{db}{dt}(t(s)) \int_{\mathbb{R}}FGdy
		-\frac{e^{-s}}{b(t(s))^2}\int_{\mathbb{R}}FGdy
		+\frac{e^{-s}}{b(t(s))^2}\int_{\mathbb{R}} \left( \frac{y}{2}F_y + G \right) G dy\\
	&\quad +\frac{e^{-s}}{b(t(s))^2}\int_{\mathbb{R}} F\left( \frac{y}{2}G_y + G \right) dy
		-\int_{\mathbb{R}}FGdy + \int_{\mathbb{R}}FF_{yy}dy + \int_{\mathbb{R}}FHdy\\ 
	&= -\frac{1}{2}\frac{e^{-s}}{b(t(s))^2}\int_{\mathbb{R}}FGdy
		-\frac{2}{b(t(s))^2} \frac{db}{dt}(t(s)) \int_{\mathbb{R}}FGdy\\
	&\quad + \frac{e^{-s}}{b(t(s))^2}\int_{\mathbb{R}}G^2 dy - \int_{\mathbb{R}}FGdy 
		-\int_{\mathbb{R}}F_y^2dy + \int_{\mathbb{R}}FHdy. 
\end{align*}
Adding up the above identities, we conclude that
\begin{align}
\nonumber
	\frac{d}{ds}\left[
		\int_{\mathbb{R}}\left( \frac{1}{2}F^2 + \frac{e^{-s}}{b(t(s))^2}FG \right)dy
			\right]
	&= -\frac{1}{4}\int_{\mathbb{R}}F^2dy
		- \frac{1}{2}\frac{e^{-s}}{b(t(s))^2}\int_{\mathbb{R}} FG dy
		- \frac{2}{b(t(s))^2} \frac{db}{dt}(t(s)) \int_{\mathbb{R}}FGdy\\
\label{en1}
	&\quad +\frac{e^{-s}}{b(t(s))^2}\int_{\mathbb{R}}G^2dy - \int_{\mathbb{R}}F_y^2 dy
		+ \int_{\mathbb{R}}FH dy. 
\end{align}
We also have
\begin{align*}
	\frac{d}{ds}\left[ \frac{1}{2} \int_{\mathbb{R}} F_y^2 dy \right]
	&= \int_{\mathbb{R}} F_y F_{ys} dy \\
	&= \int_{\mathbb{R}} F_y \left( \frac{y}{2}F_{yy} + \frac{1}{2}F_y + G_y \right) dy\\
	&= \frac{1}{4}\int_{\mathbb{R}} F_y^2 dy + \int_{\mathbb{R}} F_y G_y dy
\end{align*}
and
\begin{align*}
	\frac{d}{ds}\left[ \frac{1}{2}\frac{e^{-s}}{b(t(s))^2}\int_{\mathbb{R}}G^2 dy \right]
	&= -\frac{1}{b(t(s))^2} \frac{db}{dt}(t(s)) \int_{\mathbb{R}}G^2 dy
		-\frac{1}{2}\frac{e^{-s}}{b(t(s))^2}\int_{\mathbb{R}}G^2 dy
		+\frac{e^{-s}}{b(t(s))^2}\int_{\mathbb{R}}GG_s dy \\
	&= -\frac{1}{b(t(s))^2} \frac{db}{dt}(t(s)) \int_{\mathbb{R}}G^2 dy
		-\frac{1}{2}\frac{e^{-s}}{b(t(s))^2}\int_{\mathbb{R}}G^2 dy \\
	&\quad + \frac{e^{-s}}{b(t(s))^2}\int_{\mathbb{R}} G\left( \frac{y}{2}G_y + G \right) dy
		-\int_{\mathbb{R}}G^2 dy + \int_{\mathbb{R}}GF_{yy}dy + \int_{\mathbb{R}}GH dy \\ 
	&= -\frac{1}{b(t(s))^2} \frac{db}{dt}(t(s)) \int_{\mathbb{R}}G^2 dy
		+\frac{1}{4}\frac{e^{-s}}{b(t(s))^2}\int_{\mathbb{R}}G^2 dy \\
	&\quad - \int_{\mathbb{R}}G^2 dy - \int_{\mathbb{R}}F_yG_ydy + \int_{\mathbb{R}}GHdy. 
\end{align*}
Adding up the above two identities, one has
\begin{align}
\nonumber
	&\frac{d}{ds}\left[ \frac{1}{2} \int_{\mathbb{R}}
		\left( F_y^2 + \frac{e^{-s}}{b(t(s))^2}G^2 \right)dy \right]\\
\label{en2}
	&\quad = \frac{1}{4}\int_{\mathbb{R}}F_y^2dy
		+ \frac{1}{4}\frac{e^{-s}}{b(t(s))^2}\int_{\mathbb{R}}G^2 dy
		- \frac{1}{b(t(s))^2} \frac{db}{dt}(t(s)) \int_{\mathbb{R}}G^2 dy
		- \int_{\mathbb{R}}G^2 dy
		+ \int_{\mathbb{R}}GH dy. 
\end{align}
From \eqref{en1} and \eqref{en2}, we conclude that
\begin{align*}
	\frac{d}{ds}E_0(s)
	+\frac{1}{2}E_0(s) + \int_{\mathbb{R}}\left( \frac{1}{2}F_y^2 + G^2 \right)dy
	= R_0(s).
\end{align*}
This completes the proof.
\end{proof}


\subsection{Energy estimates for $n\ge 2$}
Next, we consider higher dimensional cases $n\ge 2$.
In this case, we cannot use the primitives \eqref{1_FG}.
Therefore, instead of \eqref{1_FG}, we define
\begin{align*}
	\hat{F}(s,\xi) = |\xi|^{-n/2-\delta}\hat{f}(s,\xi),\quad
	\hat{G}(s,\xi) = |\xi|^{-n/2-\delta}\hat{g}(s,\xi),\quad
	\hat{H}(s,\xi) = |\xi|^{-n/2-\delta}\hat{h}(s,\xi),
\end{align*}
where
$0<\delta<1$,
and
$\hat{f}(s,\xi)$ denotes the Fourier transform of $f(s,y)$ with respect to
the space variable.
First, to ensure that $\hat{F}, \hat{G}$ and $\hat{H}$ make sense as $L^2$-functions,
instead of Lemma \ref{lem_hardy}, we prove the following lemma.
\begin{lemma}\label{lem_hardy2}
Let $m>n/2+1$ and $f(y) \in H^{0,m}(\mathbb{R}^n)$ be a function satisfying
$\hat{f}(0) = (2\pi)^{-n/2} \int_{\mathbb{R}^n}f(y)dy = 0$.
Let
$\hat{F}(\xi) = |\xi|^{-n/2-\delta}\hat{f}(\xi)$
with some $0<\delta<1$.
Then,
there exists a constant $C(n,m,\delta)>0$ such that
\begin{align}
\label{hardy2}
	\| F \|_{L^2} \le C(n,m,\delta) \| f \|_{H^{0,m}}
\end{align}
holds.
\end{lemma}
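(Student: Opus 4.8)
The plan is to pass to the Fourier side via Plancherel's theorem, writing
\[
\|F\|_{L^2}^2=\|\hat F\|_{L^2}^2=\int_{\mathbb{R}^n}|\xi|^{-n-2\delta}|\hat f(\xi)|^2\,d\xi,
\]
and then to split the integral into the high-frequency region $\{|\xi|\ge 1\}$ and the low-frequency region $\{|\xi|\le 1\}$. On $\{|\xi|\ge 1\}$ the weight satisfies $|\xi|^{-n-2\delta}\le 1$, so that contribution is bounded by $\|\hat f\|_{L^2}^2=\|f\|_{L^2}^2\le\|f\|_{H^{0,m}}^2$, which is harmless.

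The substance is the low-frequency part, and here I would first exploit the weight hypothesis $m>n/2+1$ to gain regularity of $\hat f$ near the origin. By the Cauchy--Schwarz inequality, $\|\langle y\rangle f\|_{L^1}\le\|\langle y\rangle^{1-m}\|_{L^2}\|\langle y\rangle^{m}f\|_{L^2}$, and $\langle y\rangle^{1-m}\in L^2(\mathbb{R}^n)$ precisely because $2(m-1)>n$; hence $\langle y\rangle f\in L^1(\mathbb{R}^n)$, so in particular $f\in L^1$ and $|y|f\in L^1$. Consequently $\hat f$ is continuously differentiable with $\|\nabla\hat f\|_{L^\infty}\le(2\pi)^{-n/2}\||y|f\|_{L^1}\le C(n,m)\|f\|_{H^{0,m}}$. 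Combining this with the assumption $\hat f(0)=0$ and the mean value theorem gives the quantitative bound $|\hat f(\xi)|\le\|\nabla\hat f\|_{L^\infty}|\xi|\le C(n,m)\|f\|_{H^{0,m}}|\xi|$ for every $\xi\in\mathbb{R}^n$.

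Plugging this into the low-frequency integral yields
\[
\int_{|\xi|\le 1}|\xi|^{-n-2\delta}|\hat f(\xi)|^2\,d\xi\le C(n,m)\|f\|_{H^{0,m}}^2\int_{|\xi|\le 1}|\xi|^{2-n-2\delta}\,d\xi,
\]
and in polar coordinates the remaining integral equals a constant times $\int_0^1 r^{1-2\delta}\,dr$, which is finite exactly because $\delta<1$. Adding the two contributions gives $\|F\|_{L^2}^2\le C(n,m,\delta)\|f\|_{H^{0,m}}^2$, i.e. \eqref{hardy2}. The only genuinely delicate step is this low-frequency estimate: one must convert the vanishing condition $\hat f(0)=0$ into the linear decay $|\hat f(\xi)|\lesssim|\xi|$ at the origin, and it is here that the weight exponent $m>n/2+1$ is essential (it is what makes $|y|f$ integrable, hence $\hat f$ Lipschitz), while the constraint $\delta<1$ is forced by integrability of $|\xi|^{2-n-2\delta}$ near $\xi=0$; everything else is routine.
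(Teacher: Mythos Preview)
Your proof is correct and is essentially the same as the paper's: both pass to the Fourier side, split at $|\xi|=1$, and on the low-frequency region convert the vanishing condition $\hat f(0)=0$ into $|\hat f(\xi)|\le\|\nabla_\xi\hat f\|_{L^\infty}|\xi|$ (the paper writes this via the fundamental theorem of calculus, $\hat f(\xi)=\int_0^1\frac{d}{d\theta}\hat f(\theta\xi)\,d\theta$), then bounds $\|\nabla_\xi\hat f\|_{L^\infty}\le C\|yf\|_{L^1}\le C(n,m)\|f\|_{H^{0,m}}$ using $m>n/2+1$. Your presentation even makes explicit the Cauchy--Schwarz step the paper leaves implicit.
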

\begin{proof}
By the Plancherel theorem, it suffices to show that
$\| \hat{F} \|_{L^2} \le C \| f \|_{H^{0,m}}$.
Using the definition of $\hat{F}$ and the condition $\hat{f}(0)=0$ , we compute
\begin{align*}
	\int_{\mathbb{R}^n}| \hat{F}(\xi) |^2 d\xi
	&= \int_{\mathbb{R}^n} |\xi|^{-n-2\delta} |\hat{f}(\xi)|^2 d\xi\\
	&= \int_{|\xi|\le 1} |\xi|^{-n-2\delta} |\hat{f}(\xi)|^2 d\xi
		+ \int_{|\xi|>1} |\xi|^{-n-2\delta} |\hat{f}(\xi)|^2 d\xi \\
	&= \int_{|\xi|\le 1} |\xi|^{-n-2\delta}
			\left| \int_0^1\frac{d}{d\theta}\hat{f}(\theta \xi) d\theta \right|^2 d\xi
		+ \int_{|\xi|>1} |\xi|^{-n-2\delta} |\hat{f}(\xi)|^2 d\xi \\
	&\le \| \nabla_{\xi}\hat{f} \|_{L^{\infty}}^2 \int_{|\xi|\le 1} |\xi|^{2-n-2\delta}d\xi
		+ \| \hat{f} \|_{L^2}^2\\
	&\le C(n,\delta) \left( \| \nabla_{\xi}\hat{f} \|_{L^{\infty}}^2 + \| \hat{f} \|_{L^2}^2 \right).
\end{align*}
Since $m>n/2+1$, we have
\begin{align*}
	\| \nabla_{\xi}\hat{f} \|_{L^{\infty}}
	= \| \widehat{yf} \|_{L^{\infty}}
	\le C \| y f \|_{L^1} \le C(n, m) \| (1+|y| )^{m} f\|_{L^2}
		\le C(n,m) \| f \|_{H^{0,m}}.
\end{align*}
Consequently, we obtain
\begin{align*}
	\| \hat{F} \|_{L^2}
	\le C(n,\delta) \left( \| \nabla_{\xi}\hat{f} \|_{L^{\infty}} + \| \hat{f} \|_{L^2} \right)
	\le C(n,m,\delta) \| f \|_{H^{0,m}},
\end{align*}
which completes the proof.
\end{proof}

We also notice that, 
for any small $\eta >0$, the inequality
\begin{align}
\nonumber
	\int_{\mathbb{R}^n} |\hat{f}|^2 d\xi
	& = \int_{|\xi| \ge \sqrt{\eta}^{-1}} |\hat{f}|^2 d\xi
		+ \int_{|\xi| < \sqrt{\eta}^{-1}} |\hat{f}|^2 d\xi \\
\nonumber
	&\le  \eta \int_{|\xi|\ge \sqrt{\eta}^{-1}} |\xi|^2 |\hat{f}|^2 d\xi
		+ \eta^{(2-n-2\delta)/2} \int_{|\xi| < \sqrt{\eta}^{-1}} |\xi|^{2-n-2\delta} |\hat{f}|^2 d\xi \\
\label{est_f}
	&\le \eta \int_{\mathbb{R}^n} |\xi|^2 |\hat{f}|^2 d\xi
		+  \eta^{(2-n-2\delta)/2} \int_{\mathbb{R}^n} |\xi|^2 |\hat{F}|^2 d\xi
\end{align}
holds.
This is proved by noting that
$2-n-2\delta <0$ 
(here we assumed that $n\ge 2$).
The above inequality enables us to control
$\| \hat{f} \|_{L^2}$
by
$\| |\xi|\hat{f} \|_{L^2}$ and $\| |\xi|\hat{F} \|_{L^2}$.
Moreover, the coefficient in front of $\| |\xi|\hat{f} \|_{L^2}$ can be taken
arbitrarily small.

By applying the Fourier transform to \eqref{eq_fg}, we obtain
\begin{align}
\label{eq_fg_F}
	\left\{\begin{array}{ll}
	\displaystyle
	\hat{f}_s + \frac{1}{2}\nabla_{\xi}\cdot \left( \xi \hat{f} \right) - \frac{n}{2} \hat{f} = \hat{g},
	&s>0, \xi \in\mathbb{R}^n,\\
	\displaystyle
	\frac{e^{-s}}{b(t(s))^2}
	\left( \hat{g}_s + \frac{1}{2}\nabla_{\xi}\cdot \left( \xi\hat{g} \right)
		-\left( \frac{n}{2}+1\right)\hat{g} \right) + \hat{g} = -|\xi|^2\hat{f} + \hat{h},
	&s>0, \xi \in\mathbb{R}^n.
	\end{array}\right.
\end{align}
By noting that
\[
	\frac{1}{2}\nabla_{\xi}\cdot \left( \xi \hat{f} \right)
		= \frac{\xi}{2}\cdot \nabla_{\xi}\hat{f} + \frac{n}{2}\hat{f},
\]
we rewrite \eqref{eq_fg_F} as
\begin{align*}
	\left\{ \begin{array}{ll}
	\displaystyle
	\hat{f}_s + \frac{\xi}{2}\cdot \nabla_{\xi} \hat{f} = \hat{g},
	&s>0, \xi \in\mathbb{R}^n,\\
	\displaystyle
	\frac{e^{-s}}{b(t(s))^2}
	\left( \hat{g}_s + \frac{\xi}{2} \cdot \nabla_{\xi}\hat{g}- \hat{g} \right)
	+ \hat{g} = -|\xi|^2\hat{f} + \hat{h},
	&s>0, \xi \in\mathbb{R}^n.
	\end{array}\right.
\end{align*}
Making use of this, we calculate
\begin{align*}
	\hat{F}_s &=
		|\xi|^{-n/2-\delta}\hat{f}_s\\
	&= |\xi|^{-n/2-\delta} \left( -\frac{\xi}{2}\cdot \nabla_{\xi}\hat{f} + \hat{g} \right)\\
	&= |\xi|^{-n/2-\delta} \left( -\frac{\xi}{2}\cdot \nabla_{\xi} \left( |\xi|^{n/2+\delta}\hat{F} \right)
		+ |\xi|^{n/2+\delta}\hat{G} \right) \\
	&= -\frac{\xi}{2}\cdot \nabla_{\xi}\hat{F} - \frac{1}{2}\left( \frac{n}{2}+\delta \right) \hat{F}
		+ \hat{G}
\end{align*}
and
\begin{align*}
	\frac{e^{-s}}{b(t(s))^2}\hat{G}_s&=
		\frac{e^{-s}}{b(t(s))^2}|\xi|^{-n/2-\delta}\hat{g}_s\\
	&= |\xi|^{-n/2-\delta}\left[
		\frac{e^{-s}}{b(t(s))^2}\left( -\frac{\xi}{2}\cdot\nabla_{\xi}\hat{g}+\hat{g}\right)
			-\hat{g}-|\xi|^2\hat{f}+\hat{h} \right]\\
	&= |\xi|^{-n/2-\delta}\left[
		\frac{e^{-s}}{b(t(s))^2}
		\left( -\frac{\xi}{2}\cdot \nabla_{\xi}\left( |\xi|^{n/2+\delta} \hat{G} \right)
				+|\xi|^{n/2+\delta}\hat{G} \right) \right.\\
	&\qquad\qquad\qquad
		\left. -|\xi|^{n/2+\delta}\hat{G} - |\xi|^{2+n/2+\delta}\hat{F} + |\xi|^{n/2+\delta}\hat{H}
		\right]\\
	&=\frac{e^{-s}}{b(t(s))^2}\left( -\frac{\xi}{2}\cdot \nabla_{\xi}\hat{G}
		-\frac{1}{2}\left( \frac{n}{2}+\delta-2 \right) \hat{G} \right) 
		-\hat{G} - |\xi|^2\hat{F}+\hat{H}.
\end{align*}
Hence, $\hat{F}$ and $\hat{G}$ satisfy the following system.
\begin{align*}
	\left\{ \begin{array}{ll}
	\displaystyle \hat{F}_s + \frac{\xi}{2}\cdot \nabla_{\xi}\hat{F}
		+\frac{1}{2}\left( \frac{n}{2} + \delta \right) \hat{F} = \hat{G},
	&s>0, \xi \in \mathbb{R}^n,\\
	\displaystyle \frac{e^{-s}}{b(t(s))^2}\left( \hat{G}_s + \frac{\xi}{2}\cdot \nabla_{\xi} \hat{G}
		+ \frac{1}{2} \left( \frac{n}{2}+\delta-2 \right) \hat{G} \right) + \hat{G}
		= -|\xi|^2 \hat{F} + \hat{H},
	&s>0, \xi\in\mathbb{R}^n.
	\end{array} \right.
\end{align*}

We consider the following energy.
\begin{align*}
	E_0(s) &= {\rm Re} \int_{\mathbb{R}^n}
		\left( \frac{1}{2}\left( |\xi|^2 |\hat{F}|^2 + \frac{e^{-s}}{b(t(s))^2} |\hat{G}|^2 \right)
			+ \frac{1}{2} |\hat{F}|^2 + \frac{e^{-s}}{b(t(s))^2}\hat{F} \bar{\hat{G}}  \right) d\xi,\\
	E_1(s) &= \int_{\mathbb{R}^n}
		\left( \frac{1}{2}\left( |\nabla_y f |^2 + \frac{e^{-s}}{b(t(s))^2}g^2 \right)
		+ \left( \frac{n}{4} + 1 \right)
			\left( \frac{1}{2} f^2 + \frac{e^{-s}}{b(t(s))^2}fg \right) \right) dy,\\
	E_2(s) &= \int_{\mathbb{R}^n}
		|y|^{2m} \left[
			\frac{1}{2}\left( |\nabla_y f|^2 + \frac{e^{-s}}{b(t(s))^2}g^2 \right)
				+\frac{1}{2}f^2 + \frac{e^{-s}}{b(t(s))^2}fg \right] dy.
\end{align*}
By using Lemma \ref{lem_b0b1} again,
the following equivalents are valid for $s\ge s_1$ with sufficiently large $s_1$.
\begin{align}
\nonumber
	E_0(s) & \sim \int_{\mathbb{R}^n}
		\left( |\xi|^2 |\hat{F}|^2 + \frac{e^{-s}}{b(t(s))^2} |\hat{G}|^2 + |\hat{F}|^2 \right) d\xi,\\
\label{equi2}
	E_1(s) & \sim \int_{\mathbb{R}^n}
		\left( | \nabla_y f |^2 + \frac{e^{-s}}{b(t(s))^2} g^2 + f^2 \right) dy,\\
\nonumber
	E_2(s) &\sim \int_{\mathbb{R}^n}
		|y|^{2m} \left[ | \nabla_y f |^2 + \frac{e^{-s}}{b(t(s))^2} g^2 + f^2 \right] dy.
\end{align}

Then, in a similar way to the case $n=1$, we obtain the following energy identities.
\begin{lemma}\label{lem_en20}
We have
\begin{align}
\label{e0n2}
	\frac{d}{ds}E_0(s) + \delta E_0(s) + L_0(s)
	= R_0(s),
\end{align}
where
\begin{align*}
	L_0(s) &= \frac{1}{2} \int_{\mathbb{R}^n} |\xi|^2 |\hat{F}|^2 d\xi
		+\int_{\mathbb{R}^n} |\hat{G}|^2 d\xi,\\
	R_0(s) &= \frac{3}{2} \frac{e^{-s}}{b(t(s))^2}\int_{\mathbb{R}^n} |\hat{G}|^2d\xi
	- \frac{1}{b(t(s))^2} \frac{db}{dt}(t(s)) 
		{\rm Re}\int_{\mathbb{R}^n} \left( 2\hat{F} +\hat{G} \right) \bar{\hat{G}} d\xi
		+ {\rm Re} \int_{\mathbb{R}^n} \left( \hat{F} + \hat{G} \right) \bar{\hat{H}} d\xi.
\end{align*}
Moreover, we have
\begin{align}
\label{e1n2}
	\frac{d}{ds}E_1(s) + \delta E_1(s) + L_1(s)
	=R_1(s),
\end{align}
where
\begin{align*}
	L_1(s) &= \frac{1}{2}(1-\delta) \int_{\mathbb{R}^n}|\nabla_y f|^2 dy
		+ \int_{\mathbb{R}^n} g^2 dy
		- \left( \frac{n}{4}+\frac{\delta}{2} \right)\left(\frac{n}{4}+1\right)
			\int_{\mathbb{R}^n}f^2 dy,\\
\nonumber
	R_1(s) &= \left( \frac{n}{2}+\delta \right)\left( \frac{n}{4}+1 \right)
			\frac{e^{-s}}{b(t(s))^2}\int_{\mathbb{R}^n} fg dy
		+ \frac{1}{2}(n+3+\delta) \frac{e^{-s}}{b(t(s))^2} \int_{\mathbb{R}^n} g^2 dy \\
	&\quad
		- \frac{1}{b(t(s))^2} \frac{db}{dt}(t(s))
			\int_{\mathbb{R}^n} \left( 2 \left(\frac{n}{4}+1\right) f + g \right) g dy
		+ \int_{\mathbb{R}^n} \left( \left( \frac{n}{4} + 1 \right) f + g \right) h dy.
\end{align*}
Furthermore, we have
\begin{align}
\label{e2n2}
	&\frac{d}{ds}E_2(s)+ (\tilde{\delta} - \eta) E_2(s) + L_2(s) 
	=R_2(s),
\end{align}
where
$\tilde{\delta} = m - n/2$,
$\eta \in (0,\tilde{\delta})$ is an arbitrary number,
\begin{align*}
	L_2(s) &= \frac{\eta}{2}\int_{\mathbb{R}^n}|y|^{2m} f^2 dy
	 + \frac{1}{2} ( \eta + 1) \int_{\mathbb{R}^n} |y|^{2m} |\nabla_y f|^2 dy
	 + \int_{\mathbb{R}^n} |y|^{2m} g^2 dy\\
	 &\quad + 2m \int_{\mathbb{R}^n}|y|^{2m-2} (y\cdot \nabla_y f)(f+g) dy,\\
	R_2(s) &= -\eta \frac{e^{-s}}{b(t(s))^2} \int_{\mathbb{R}^n} |y|^{2m} f g dy
		- \frac{1}{2}(\eta - 3) \frac{e^{-s}}{b(t(s))^2} \int_{\mathbb{R}^n} |y|^{2m} g^2 dy\\
	&\quad - \frac{1}{b(t(s))^2} \frac{db}{dt}(t(s))
		\int_{\mathbb{R}^n}|y|^{2m} (2f +g) g dy
		+ \int_{\mathbb{R}^n} |y|^{2m} (f+g) h dy.
\end{align*}
\end{lemma}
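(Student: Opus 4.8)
The three identities are the $n\ge 2$ analogues of Lemma~\ref{lem_en0}, and I would prove each of them by exactly the same mechanism: differentiate every term occurring in the corresponding energy $E_i(s)$, substitute the evolution equations — the first-order Fourier system for $(\hat F,\hat G)$ written just above the statement in the case of $E_0$, and the system~\eqref{eq_fg} (together with the equation for $\nabla_y f$ obtained by differentiating its first line) in the case of $E_1,E_2$ — integrate by parts in $\xi$ or in $y$, use Lemma~\ref{lem_b0} to differentiate the weight $e^{-s}/b(t(s))^2$, and sort the outcome into the prescribed decay term, the dissipation $L_i$ and the remainder $R_i$. As in Lemma~\ref{lem_en0}, the coupling term (here $-|\xi|^2\hat F+\hat H$, respectively $\Delta_y f+h$) produces the gradient part of $L_i$ and the $\hat H$- (respectively $h$-) dependent part of $R_i$, while the $s$-derivative of $e^{-s}/b^2$ and the factor $\tfrac{1}{b^2}\tfrac{db}{dt}$, which will eventually be controlled through Lemma~\ref{lem_b0b1}, produce the remaining lower-order terms of $R_i$.

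The only thing to watch is the arithmetic of the zeroth-order coefficients. For $E_0$, integrating the transport operator $\tfrac{\xi}{2}\cdot\nabla_{\xi}$ by parts contributes $+\tfrac{n}{4}$ on each quadratic term; together with the coefficients $\tfrac12(\tfrac{n}{2}+\delta)$ and $\tfrac12(\tfrac{n}{2}+\delta-2)$ carried by the $\hat F$- and $\hat G$-equations, the $n$-contributions cancel and the net rate is exactly $\delta$. For $E_1$ the analogous count, after the customary cancellation of the $\Delta_y f$ terms between the $|\nabla_y f|^2$- and the $g^2$-pieces and the contribution $-(\tfrac{n}{4}+1)\int|\nabla_y f|^2$ coming from the cross term $(\tfrac{n}{4}+1)\tfrac{e^{-s}}{b^2}fg$ via the $g$-equation, leaves the net rate $\delta$ and the coefficient $\tfrac12(1-\delta)$ in front of $\int|\nabla_y f|^2$ in $L_1$. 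For $E_2$ the weight $|y|^{2m}$ is handled through $\nabla_y\cdot(|y|^{2m}y)=(2m+n)|y|^{2m}$, so the transport term now produces $m$-dependent coefficients; since $2m-n=2\tilde{\delta}$, the $\int|y|^{2m}f^2$-term would have the exact rate $\tilde{\delta}$, and this is precisely why one writes $\tilde{\delta}-\eta$ rather than $\tilde{\delta}$: the surplus $\eta$ is kept as the positive coefficient $\tfrac{\eta}{2}$ of $\int|y|^{2m}f^2$ in $L_2$ (with $\eta=0$ this term disappears), the coefficient $\tfrac12(\eta+1)$ in front of $\int|y|^{2m}|\nabla_y f|^2$ picks up an extra $\int|y|^{2m}|\nabla_y f|^2$ from the term $\int|y|^{2m}f\,\Delta_y f$ generated by differentiating $\tfrac{e^{-s}}{b^2}\int|y|^{2m}fg$ and using the $g$-equation, and the sign-indefinite term $2m\int|y|^{2m-2}(y\cdot\nabla_y f)(f+g)$ is simply left inside $L_2$.

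The genuine point requiring care — just as in the proof of~\eqref{e0n1}, where one had to check that $\tfrac{y}{2}FF_y\in L^1$ — is the legitimacy of every integration by parts, i.e.\ the vanishing of the boundary contributions as $|y|\to\infty$ or, for $E_0$, near $\xi=0$ and as $|\xi|\to\infty$. For $E_1$ and $E_2$ this follows from the regularity~\eqref{fgcls2} together with the weighted bounds implicit in the definitions of the energies, after a cut-off and approximation argument of the kind used in Lemma~\ref{lem_hardy}. For $E_0$ it is here that the mean-zero conditions $\hat f(0)=\hat g(0)=\hat h(0)=0$, which hold by~\eqref{fg_int} and~\eqref{h_int}, are indispensable: by Lemma~\ref{lem_hardy2} they make $\hat F,\hat G,\hat H$ honest $L^2$-functions, and the splitting~\eqref{est_f} controls $\|\hat f\|_{L^2}$ and $\int|\xi|^2|\hat F|^2\,d\xi$ in terms of admissible norms, so that every integral appearing in $L_0$ and $R_0$ is finite and the formal computation is rigorous. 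With these preparations in place, collecting terms exactly as in Lemma~\ref{lem_en0} yields~\eqref{e0n2}, \eqref{e1n2} and~\eqref{e2n2}.
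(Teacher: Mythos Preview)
Your proposal is correct and follows essentially the same approach as the paper: the paper also proves only \eqref{e0n2} in detail by differentiating each constituent of $E_0(s)$, substituting the $(\hat F,\hat G)$-system, integrating the transport terms by parts in $\xi$, and using Lemma~\ref{lem_b0} for the $s$-derivative of $e^{-s}/b(t(s))^2$, then declares \eqref{e1n2} and \eqref{e2n2} to be ``almost the same''. Your additional remarks on the coefficient bookkeeping (the cancellation of the $n$-contributions leaving the net rate $\delta$, and the role of $\nabla_y\cdot(|y|^{2m}y)=(2m+n)|y|^{2m}$ in producing $\tilde\delta$) and on the justification of the integrations by parts via \eqref{fg_int}, \eqref{h_int} and Lemma~\ref{lem_hardy2} are accurate and in fact more explicit than what the paper records.
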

\begin{proof}
The proofs of \eqref{e1n2} and \eqref{e2n2} are the almost same as
that of \eqref{e0n2},
and we only prove \eqref{e0n2}.
First, we calculate
\begin{align*}
	\frac{d}{ds} \left[ \frac{1}{2} \int_{\mathbb{R}^n} |\hat{F}|^2\, d\xi \right]
	&= {\rm Re\,} \int_{\mathbb{R}^n}
		\left( -\frac{\xi}{2} \cdot \nabla_{\xi} \hat{F}
			- \frac{1}{2}\left( \frac{n}{2}+\delta \right) \hat{F} + \hat{G} \right)
			\bar{\hat{F}} \, d\xi \\
	&= -\frac{\delta}{2} \int_{\mathbb{R}^n} |\hat{F}|^2\, d\xi
		+ {\rm Re\,} \int_{\mathbb{R}^n} \bar{\hat{F}} \hat{G}\, d\xi
\end{align*}
and
\begin{align*}
	\frac{d}{ds}\left[ \frac{e^{-s}}{b(t(s))^2} {\rm Re\,}
		\int_{\mathbb{R}^n} \hat{F} \bar{\hat{G}} \, d\xi \right]
	&= - \frac{2}{b(t(s))^2} \frac{db}{dt}(t(s))
		{\rm Re\,} \int_{\mathbb{R}^n} \hat{F} \bar{\hat{G}} \,d\xi
		- \frac{e^{-s}}{b(t(s))^2} 
		{\rm Re\,} \int_{\mathbb{R}^n} \hat{F} \bar{\hat{G}} \,d\xi \\
	&\quad + \frac{e^{-s}}{b(t(s))^2}
		{\rm Re\,} \int_{\mathbb{R}^n}
			\left( \hat{F}_s \bar{\hat{G}} + \bar{\hat{F}} \hat{G}_s \right) \,d\xi \\
	&=
		- \frac{2}{b(t(s))^2} \frac{db}{dt}(t(s))
		{\rm Re\,} \int_{\mathbb{R}^n} \hat{F} \bar{\hat{G}} \,d\xi
		 - \delta \frac{e^{-s}}{b(t(s))^2}
		 {\rm Re\,} \int_{\mathbb{R}^n} \hat{F} \bar{\hat{G}} \,d\xi \\
	&\quad
		+ \frac{e^{-s}}{b(t(s))^2} \int_{\mathbb{R}^n} |\hat{G}|^2 \,d\xi 
		-  {\rm Re\,} \int_{\mathbb{R}^n} \hat{F} \bar{\hat{G}} \,d\xi \\
	&\quad - \int_{\mathbb{R}^n} |\xi|^2 |\hat{F}|^2 \, d\xi
		+ {\rm Re\,} \int_{\mathbb{R}^n} \hat{F} \bar{\hat{H}}\, d\xi.
\end{align*}
Adding up these identities, we see that
\begin{align}
\nonumber
	&\frac{d}{ds}\left[ {\rm Re\,}
		\int_{\mathbb{R}^n}
		\left( |\hat{F}|^2 + \frac{e^{-s}}{b(t(s))^2}  \hat{F} \bar{\hat{G}} \right) \, d\xi \right] \\
\nonumber
	&\quad = -\frac{\delta}{2} \int_{\mathbb{R}^n} |\hat{F}|^2\,d\xi
		- \frac{2}{b(t(s))^2} \frac{db}{dt}(t(s)) {\rm Re\,}
			\int_{\mathbb{R}^n} \hat{F} \bar{\hat{G}}\, d\xi
		- \delta \frac{e^{-s}}{b(t(s))^2}
			{\rm Re\,} \int_{\mathbb{R}^n} \hat{F} \bar{\hat{G}}\, d\xi \\
\label{en21}
	&\qquad + \frac{e^{-s}}{b(t(s))^2} \int_{\mathbb{R}^n} | \hat{G} |^2\, d\xi
		- \int_{\mathbb{R}^n} |\xi|^2 |\hat{F}|^2 \,d\xi
		+ {\rm Re\,} \int_{\mathbb{R}^n} \hat{F} \bar{\hat{H}}\, d\xi
\end{align}
We also have
\begin{align*}
	\frac{d}{ds} \left[ \frac{1}{2} \int_{\mathbb{R}^n} |\xi|^2 |\hat{F}|^2\, d\xi \right]
	&= {\rm Re\,} \int_{\mathbb{R}^n} |\xi|^2
		\left( -\frac{\xi}{2} \cdot \nabla_{\xi} \hat{F}
			- \frac{1}{2}\left( \frac{n}{2}+\delta \right) \hat{F} + \hat{G} \right)
			\bar{\hat{F}} \, d\xi \\
	&= \frac{1}{2}(1-\delta) \int_{\mathbb{R}^n} |\xi|^2 |\hat{F}|^2\,d\xi
		+ {\rm Re\,} \int_{\mathbb{R}^n} |\xi|^2 \bar{\hat{F}} \hat{G}\, d\xi
\end{align*}
and
\begin{align*}
	\frac{d}{ds}\left[ \frac{1}{2} \frac{e^{-s}}{b(t(s))^2}
		\int_{\mathbb{R}^n} |\hat{G}|^2\, d\xi \right]
	&= -\frac{1}{b(t(s))^2} \frac{db}{dt}(t(s)) \int_{\mathbb{R}^n} |\hat{G}|^2\,d\xi
		-\frac{1}{2} \frac{e^{-s}}{b(t(s))^2} \int_{\mathbb{R}^n} |\hat{G}|^2\,d\xi \\
	&\quad	
		+ \frac{e^{-s}}{b(t(s))^2} {\rm Re\,} \int_{\mathbb{R}^n} \hat{G}_s \bar{\hat{G}}\,d\xi \\
	&= -\frac{1}{b(t(s))^2} \frac{db}{dt}(t(s)) \int_{\mathbb{R}^n} |\hat{G}|^2\,d\xi
		+ \frac{1}{2}(1-\delta) \frac{e^{-s}}{b(t(s))^2} \int_{\mathbb{R}^n} |\hat{G}|^2\,d\xi \\
	&\quad - \int_{\mathbb{R}^n} |\hat{G}|^2\,d\xi
		- {\rm Re\,} \int_{\mathbb{R}^n} |\xi|^2 \hat{F} \bar{\hat{G}} \,d\xi
		+ {\rm Re\,} \int_{\mathbb{R}^n} \hat{G} \bar{\hat{H}} \,d\xi.
\end{align*}
Summing up the above identities, we have
\begin{align}
\nonumber
	&\frac{d}{ds}\left[ \frac{1}{2} \frac{e^{-s}}{b(t(s))^2}
		\int_{\mathbb{R}^n} |\hat{G}|^2\, d\xi \right] \\
\nonumber
	&\quad =
		\frac{1}{2}(1-\delta) \int_{\mathbb{R}^n} |\xi|^2 |\hat{F}|^2\,d\xi
		- \frac{1}{b(t(s))^2} \frac{db}{dt}(t(s)) \int_{\mathbb{R}^n} |\hat{G}|^2\,d\xi \\
\label{en22}
	&\qquad
		+ \frac{1}{2}(1-\delta)  \frac{e^{-s}}{b(t(s))^2} \int_{\mathbb{R}^n} |\hat{G}|^2\,d\xi 
		- \int_{\mathbb{R}^n} |\hat{G}|^2\,d\xi
		+ {\rm Re\,} \int_{\mathbb{R}^n} \hat{G} \bar{\hat{H}} \,d\xi.
\end{align}
From \eqref{en21} and \eqref{en22}, we conclude \eqref{e0n2}.
\end{proof}


\subsection{Proof of Proposition \ref{prop_ap}}
In either case when $n=1$ or $n \ge 2$, we have proved energy identities
of $E_j(s)$ with remainder terms $R_j\ (j=0,1,2)$.
Hereafter, we unify the both cases
and complete the proof of Proposition \ref{prop_ap}.
We define
\begin{align*}
	E_3(s) = \frac{1}{2}\frac{e^{-s}}{b(t(s))^2}\left( \frac{d\alpha}{ds}(s) \right)^2
		+ e^{-2\lambda s}\alpha(s)^2
\end{align*}
and
\begin{align*}
	E_4(s) = C_0 E_0(s) + C_1 E_1(s) + E_2(s) + E_3(s),
\end{align*}
where
$\lambda > 0$ is determined later,
and $C_0, C_1$ are positive constants such that
$1 \ll C_1 \ll C_0$.
By recalling the equivalences \eqref{equi1} and \eqref{equi2}, 
the following equivalence is valid for $s\ge s_1$:
\begin{align}
\label{equi4}
	E_4(s) \sim
	\| f(s) \|_{H^{1,m}}^2 + \frac{e^{-s}}{b(t(s))^2}\| g(s) \|_{H^{0,m}}^2
					+ \frac{e^{-s}}{b(t(s))^2}\left( \frac{d\alpha}{ds}(s) \right)^2
					+ e^{-2\lambda s}\alpha(s)^2.
\end{align}
To obtain the energy estimate of $E_4(s)$, we first notice the following lemma.
\begin{lemma}\label{lem_alpha2} 
We have
\begin{align*}
	&\frac{d}{ds} E_3(s) + 2 \lambda E_3(s) +\left( \frac{d\alpha}{ds}(s) \right)^2 = R_3(s),
\end{align*}
where
\begin{align}
\nonumber
	R_3(s) &=  \frac{1}{2}(2\lambda +1 ) \frac{e^{-s}}{b(t(s))^2}\left( \frac{d\alpha}{ds}(s) \right)^2
			- \frac{1}{b(t(s))^2} \frac{db}{dt}(t(s)) \left( \frac{d\alpha}{ds}(s) \right)^2 \\
\label{rp}
	&\quad + \frac{d\alpha}{ds}(s) \left( \int_{\mathbb{R}^n} r(s,y) dy \right)
		+ 2 e^{-2 \lambda s} \alpha(s) \frac{d\alpha}{ds}(s).
\end{align}
\end{lemma}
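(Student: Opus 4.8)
The plan is a direct differentiation of $E_3(s)$, feeding in the two identities already established: the second formula of Lemma \ref{lem_b0} for the $s$-derivative of the weight, and formula \eqref{alpha_ddt} of Lemma \ref{lem_alpha} for $\alpha$. First I would note that, by the regularity \eqref{vwcls2} (equivalently \eqref{fgcls2}) together with Lemma \ref{lem_alpha}, one has $\alpha \in C^2([0,S);\mathbb{R})$, so that all the computations below are legitimate. Writing $a(s) = e^{-s}/b(t(s))^2$ for brevity, Lemma \ref{lem_b0} gives
\[
	a'(s) = -a(s) - \frac{2}{b(t(s))^2}\frac{db}{dt}(t(s)),
\]
while \eqref{alpha_ddt} reads $a(s)\,\frac{d^2\alpha}{ds^2}(s) = a(s)\,\frac{d\alpha}{ds}(s) - \frac{d\alpha}{ds}(s) + \int_{\mathbb{R}^n} r(s,y)\,dy$.

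Next I would differentiate the two summands of $E_3$. For the first,
\[
	\frac{d}{ds}\left[ \frac12 a(s)\Big( \frac{d\alpha}{ds} \Big)^2 \right]
	= \frac12 a'(s)\Big( \frac{d\alpha}{ds} \Big)^2 + a(s)\,\frac{d\alpha}{ds}\,\frac{d^2\alpha}{ds^2},
\]
and substituting the two displayed identities above, the terms $\pm a(s)(d\alpha/ds)^2$ partly cancel, leaving
\[
	\frac12 a(s)\Big( \frac{d\alpha}{ds} \Big)^2 - \frac{1}{b(t(s))^2}\frac{db}{dt}(t(s)) \Big( \frac{d\alpha}{ds} \Big)^2 - \Big( \frac{d\alpha}{ds} \Big)^2 + \frac{d\alpha}{ds}\int_{\mathbb{R}^n} r(s,y)\,dy.
\]
For the second summand, $\frac{d}{ds}[e^{-2\lambda s}\alpha^2] = -2\lambda e^{-2\lambda s}\alpha^2 + 2 e^{-2\lambda s}\alpha\,\frac{d\alpha}{ds}$.

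Finally I would add these two derivatives, add $2\lambda E_3(s) = \lambda a(s)(d\alpha/ds)^2 + 2\lambda e^{-2\lambda s}\alpha^2$ and the extra term $(d\alpha/ds)^2$, and collect: the $\pm 2\lambda e^{-2\lambda s}\alpha^2$ cancel, the $\mp(d\alpha/ds)^2$ cancel, and the coefficient of $a(s)(d\alpha/ds)^2$ becomes $\lambda + \frac12 = \frac12(2\lambda+1)$, so that what remains is precisely $R_3(s)$ as displayed in \eqref{rp}. There is no genuine obstacle here: the only points that need care are the sign bookkeeping in these cancellations and the use of $\alpha \in C^2$ so that \eqref{alpha_ddt} may be invoked pointwise in $s$, and both are already in place.
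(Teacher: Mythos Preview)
Your proof is correct and is precisely the direct computation the paper has in mind; the paper states Lemma \ref{lem_alpha2} without proof, and your differentiation of $E_3(s)$ using Lemma \ref{lem_b0} and \eqref{alpha_ddt} is the intended (and essentially the only) argument.
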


Then, we can also see the following energy estimate.
\begin{lemma}\label{lem_en3}
We have
\begin{align}
\label{en3_est}
	&\frac{d}{ds}E_4(s) + 2 \lambda E_4(s)
	+ L_4(s)
	= R_4(s),
\end{align}
where
\begin{align*}
	L_4(s) &= \left( \frac{1}{2} - 2 \lambda \right) \left( C_0E_0(s)+C_1E_1(s)+E_2(s) \right)
			+ C_0L_0(s) + C_1 L_1(s) + L_2(s) + \left( \frac{d\alpha}{ds}(s) \right)^2,
\end{align*}
for $n=1$,
\begin{align*}
	L_4(s) &= C_0 (\delta-2\lambda) E_0(s)
		+ C_1 (\delta-2\lambda) E_1(s) + (\tilde{\delta}-\eta-2\lambda) E_2(s) \\
		&\quad + C_0L_0(s) + C_1 L_1(s) + L_2(s) + \left( \frac{d\alpha}{ds}(s) \right)^2
\end{align*}
for $n\ge 2$,
and
\begin{align*}
	R_4(s) &= C_0R_0(s)+C_1R_1(s) +R_2(s) + R_3(s).
\end{align*}
Here $R_0,R_1,R_2$ and $L_0,L_1,L_2$ are defined in
Lemmas \ref{lem_en0}\ $(n=1)$ and \ref{lem_en20}\ $(n\ge 2)$,
and
$R_3$ is defined by \eqref{rp}.
\end{lemma}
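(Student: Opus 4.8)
The plan is to derive \eqref{en3_est} simply by forming the linear combination $E_4 = C_0 E_0 + C_1 E_1 + E_2 + E_3$ of the four energy identities already at our disposal and rearranging. First I would record, in the case $n=1$, the identities \eqref{e0n1}, \eqref{e1n1}, \eqref{e2n1} of Lemma \ref{lem_en0}, all of the common shape $\frac{d}{ds}E_j + \frac{1}{2}E_j + L_j = R_j$ for $j=0,1,2$, together with the identity $\frac{d}{ds}E_3 + 2\lambda E_3 + (\frac{d\alpha}{ds})^2 = R_3$ from Lemma \ref{lem_alpha2}. Multiplying the first three by $C_0$, $C_1$, $1$ and summing all four gives an expression for $\frac{d}{ds}E_4$; adding $2\lambda E_4 = 2\lambda(C_0 E_0 + C_1 E_1 + E_2 + E_3)$ to both sides, the $\pm 2\lambda E_3$ contributions cancel, the surviving zeroth-order terms of $E_0,E_1,E_2$ combine into $(\frac{1}{2}-2\lambda)(C_0 E_0 + C_1 E_1 + E_2)$, and moving the $L_j$ terms and $(\frac{d\alpha}{ds})^2$ to the left-hand side produces \eqref{en3_est} with $L_4$ as stated and $R_4 = C_0 R_0 + C_1 R_1 + R_2 + R_3$. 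For $n\ge 2$ the computation is the same word for word, except that the coefficients of $E_0,E_1$ in \eqref{e0n2}, \eqref{e1n2} are $\delta$ and that of $E_2$ in \eqref{e2n2} is $\tilde{\delta}-\eta$; hence after adding $2\lambda E_4$ the zeroth-order terms become $C_0(\delta-2\lambda)E_0 + C_1(\delta-2\lambda)E_1 + (\tilde{\delta}-\eta-2\lambda)E_2$, which is precisely the first group of $L_4$ in that case.

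Before the algebra I would remark that every step is justified by the regularity already assumed: by \eqref{vwcls2} and hence \eqref{fgcls2} (and \eqref{cfcgcls2n1} in one dimension, or the corresponding Fourier-side statement when $n\ge 2$), each $E_j$ with $j=0,1,2$ is a $C^1$ function of $s$ on $[0,S)$, while Lemma \ref{lem_alpha} gives $\alpha\in C^2$ so that $E_3$ is differentiable and Lemma \ref{lem_alpha2} is applicable. The ordering $C_0 \gg C_1 \gg 1$ of the constants plays no part in this lemma; it will only be used afterwards when $L_4$ is shown to dominate the good terms and absorb part of $R_4$.

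Accordingly, I do not anticipate a genuine obstacle in proving Lemma \ref{lem_en3}: the content is purely bookkeeping, and the only points requiring a little attention are keeping the signs and coefficients straight in the two formulas for $L_4$ and checking that the $2\lambda E_3$ term generated by Lemma \ref{lem_alpha2} is exactly cancelled by the $E_3$-part of $2\lambda E_4$ (which holds since $E_3$ enters $E_4$ with coefficient one). The substantive analysis — estimating $L_4$ from below and $R_4$ from above, and then fixing $\lambda$, $\delta$, $\eta$, $C_0$, $C_1$ — is postponed to the error estimates of Section 4.
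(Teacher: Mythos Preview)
Your proposal is correct and is exactly the approach the paper intends: the lemma is stated in the paper without an explicit proof because it is nothing more than the linear combination $C_0\cdot\eqref{e0n1}+C_1\cdot\eqref{e1n1}+\eqref{e2n1}$ (resp.\ $C_0\cdot\eqref{e0n2}+C_1\cdot\eqref{e1n2}+\eqref{e2n2}$) together with Lemma~\ref{lem_alpha2}, followed by adding $2\lambda E_4$ to both sides. Your bookkeeping of the coefficients and the cancellation of the $2\lambda E_3$ terms is accurate; the only inessential slip is that the lower bound for $L_4$ is actually handled in Lemma~\ref{lem_l4} (still in Section~3) rather than in Section~4.
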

Then, by the Schwarz inequality and the inequality \eqref{est_f},
we obtain the following lower estimate of $L_4$.
Here we recall that
$\delta \in (0,1)$ is an arbitrary number,
$\tilde{\delta}=m-n/2$ and
$\eta>0$ is an arbitrary small number.
\begin{lemma}\label{lem_l4}
If
$0<\lambda \le 1/4\ (n=1)$,
$0<\lambda < \min \{ \frac{1}{2}, \frac{m}{2}-\frac{n}{4} \} \ (n\ge 2)$,
then
\begin{align*}
	L_4(s) & \ge C \left( \| f(s) \|_{H^{1,m}}^2 + \| g(s) \|_{H^{0,m}}^2 + \left( \frac{d\alpha}{ds}(s) \right)^2 \right)
\end{align*}
holds for $s \ge s_1$. 
\end{lemma}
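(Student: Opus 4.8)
The plan is to substitute the explicit expressions for $L_0,L_1,L_2$ from Lemma~\ref{lem_en0} (for $n=1$) or Lemma~\ref{lem_en20} (for $n\ge 2$) into the definition of $L_4$ given in Lemma~\ref{lem_en3}, and then show that, once the free parameters are chosen in the correct order, $L_4(s)$ dominates $\|f(s)\|_{H^{1,m}}^2+\|g(s)\|_{H^{0,m}}^2+(\frac{d\alpha}{ds}(s))^2$. The last summand appears verbatim in $L_4$, so the work is with $f$ and $g$. Inspecting the formulas, $L_1$ and $L_2$ supply with \emph{favourable} sign the gradient quantities $\|\nabla_y f\|_{L^2}^2$, $\||y|^m\nabla_y f\|_{L^2}^2$ and the $g$-quantities $\|g\|_{L^2}^2$, $\||y|^m g\|_{L^2}^2$, together with the harmless $(\frac{d\alpha}{ds})^2$, while the \emph{bad} contributions are a negative multiple of $\int f^2$ coming from $L_1$ (its coefficient in $L_4$ is of size $O(C_1)$) and the cross terms $2\int_{\mathbb{R}} y f_y(f+g)\,dy$ (for $n=1$), $2m\int_{\mathbb{R}^n}|y|^{2m-2}(y\cdot\nabla_y f)(f+g)\,dy$ (for $n\ge 2$).

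The first step is to dispose of the cross terms. The $f$-part is integrated by parts: $2\int y f_y f\,dy=-\int f^2\,dy$ when $n=1$, and $2m\int|y|^{2m-2}(y\cdot\nabla_y f)f\,dy=-m(n+2m-2)\int|y|^{2m-2}f^2\,dy$ when $n\ge 2$; in the latter one trades weights by Young's inequality $|y|^{2m-2}\le\epsilon|y|^{2m}+C_\epsilon$, so that this term is bounded by $\epsilon\||y|^m f\|_{L^2}^2+C_\epsilon\|f\|_{L^2}^2$, with the first summand absorbed into the positive $\||y|^m f\|^2$-contribution from $(\tilde{\delta}-\eta-2\lambda)E_2$ and from $L_2$ (legitimate since $\lambda<\frac{\tilde{\delta}-\eta}{2}=\frac{m}{2}-\frac{n}{4}-\frac{\eta}{2}$, which holds for $\lambda<\frac m2-\frac n4$ and $\eta$ small). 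The $g$-part is estimated by Young's inequality, again trading weights, yielding a bound $\epsilon\||y|^m\nabla_y f\|^2+\epsilon'\||y|^m g\|^2+C_{\epsilon,\epsilon'}\|g\|^2$; the first two are absorbed into the corresponding positive terms of $L_2$ and $E_2$, and the residual $C_{\epsilon,\epsilon'}\|g\|^2$ is swallowed by the term $C_1\|g\|_{L^2}^2$ inside $C_1L_1$ once $C_1$ is large. The $n=1$ cross term is handled in the same way, using $\tfrac12\int y^2 f_y^2$ and $C_1\int g^2$ to absorb the Young errors.

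The crucial and most delicate point is to recover $\|f\|_{L^2}^2$, which does \emph{not} appear with a good sign in any of $L_0,L_1,L_2$; on the contrary, $C_1L_1$ together with the integrated-by-parts cross terms contributes a negative multiple of $\int f^2$ whose size grows with $C_1$. This is exactly where $C_0L_0$ and the hierarchy $1\ll C_1\ll C_0$ are forced. For $n=1$ one uses the identity $F_y=f$, so that $L_0=\tfrac12\int f^2\,dy+\int G^2\,dy$ already contains $\tfrac12\|f\|_{L^2}^2$, and $C_0L_0$ beats the $O(C_1)$ bad term. For $n\ge 2$ one invokes the interpolation inequality \eqref{est_f}: $\|f\|_{L^2}^2\le\eta'\|\nabla_y f\|_{L^2}^2+C_{\eta'}\int_{\mathbb{R}^n}|\xi|^2|\hat F|^2\,d\xi\le\eta'\|\nabla_y f\|_{L^2}^2+2C_{\eta'}L_0$, so that $C_0L_0\ge\frac{C_0}{2C_{\eta'}}\|f\|_{L^2}^2-\frac{C_0\eta'}{2C_{\eta'}}\|\nabla_y f\|_{L^2}^2$; the error $\frac{C_0\eta'}{2C_{\eta'}}\|\nabla_y f\|^2$ is absorbed into $\tfrac12(1-\delta)C_1\|\nabla_y f\|^2$ coming from $C_1L_1$. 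The two requirements $\frac{C_0}{2C_{\eta'}}\gg C_1$ and $\frac{C_0\eta'}{2C_{\eta'}}\ll C_1$ are simultaneously satisfiable because $\eta'/C_{\eta'}=(\eta')^{(n+2\delta)/2}\to 0$ as $\eta'\to 0$ (using $n\ge 2$, $\delta>0$): fix $\eta'$ small depending on $n,\delta$, then $C_1$ large, then $C_0$ in the resulting admissible window, which still obeys $C_0\gg C_1$. I expect this constant bookkeeping — making sure no positive term is spent twice — to be the main obstacle of the proof; the conceptual heart is simply that $\|f\|_{L^2}^2$ must be purchased from $C_0L_0$, via $F_y=f$ or via \eqref{est_f}.

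Finally one assembles everything by fixing the parameters in order: $\delta\in(0,1)$ close to $1$ and $\eta\in(0,\tilde{\delta})$ small (these are structural), then the Young and interpolation parameters $\eta',\epsilon,\epsilon'$ small, then $C_1\gg1$, then $C_0\gg C_1$, and lastly $\lambda$ in the stated range, so that $\delta-2\lambda>0$, $\tilde{\delta}-\eta-2\lambda>0$ and (for $n=1$) $\tfrac12-2\lambda\ge0$. For $n\ge 2$ the strictly positive multiples $C_1(\delta-2\lambda)E_1$ and $(\tilde{\delta}-\eta-2\lambda)E_2$ then directly produce $\|f\|_{L^2}^2+\|\nabla_y f\|_{L^2}^2$ and $\||y|^m f\|_{L^2}^2+\||y|^m\nabla_y f\|_{L^2}^2$, while for $n=1$ the weighted quantities come from $(\tfrac12-2\lambda)E_2$ together with $L_2$, and the unweighted ones from $C_0L_0+C_1L_1$. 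Collecting the surviving positive terms and using the equivalences \eqref{equi1}, \eqref{equi2} gives $L_4(s)\ge C\big(\|f(s)\|_{H^{1,m}}^2+\|g(s)\|_{H^{0,m}}^2+(\tfrac{d\alpha}{ds}(s))^2\big)$ for $s\ge s_1$, as claimed.
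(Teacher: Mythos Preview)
Your proposal is correct and follows essentially the same route as the paper: both recover $\|f\|_{L^2}^2$ from $C_0L_0$ via $F_y=f$ (when $n=1$) or via the interpolation inequality \eqref{est_f} (when $n\ge 2$), absorb the cross terms of $L_2$ by Young/Schwarz, and close the argument with the hierarchy $1\ll C_1\ll C_0$. The only differences are cosmetic---you integrate the $f$-part of the cross term by parts whereas the paper applies Schwarz to the whole cross term, and you spell out the compatibility of the two constraints on $C_0$ via $\eta'/C_{\eta'}=(\eta')^{(n+2\delta)/2}\to 0$ whereas the paper leaves this implicit---but the strategy is identical.
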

\begin{proof}
Let $\lambda$ satisfy
$0<\lambda \le 1/4\ (n=1)$ and
$0<\lambda < \min \{ \frac{1}{2}, \frac{m}{2}-\frac{n}{4} \} \ (n\ge 2)$.
We take the parameter
$\delta$ so that
$2\lambda < \delta < 1$.
Then, recalling that $\tilde{\delta} = m - n/2$, we have
$2 \lambda < \min\{ \delta, \tilde{\delta} \}$.
We also note that
the equivalences \eqref{equi1} and \eqref{equi2} of $E_0(s), E_1(s), E_2(s)$
yield the positivity of the first three terms of $L_4(s)$ for $s \ge s_1$. 
Therefore, it suffices to consider the terms
$L_0(s), L_1(s)$ and $L_2(s)$.
When $n = 1$, noting $F_y = f$ and applying the Schwarz inequality, we easily have
\[
	\int_{\mathbb{R}} f^2 dy = \int_{\mathbb{R}} F_y^2 dy
\]
and
\[
	2\left| \int_{\mathbb{R}}y f_y (f+g) dy \right| 
	\le \frac{1}{4} \int_{\mathbb{R}} y^2 f_y^2 dy + 8 \int_{\mathbb{R}} (f^2 + g^2) dy.
\]
Hence, taking $C_1>8$ and $C_0>2C_1$,
we obtain the desired estimate.

Next, when $n\ge 2$, we note that, for any small $\mu >0$, we have
\begin{align*}
	& \left| \int_{\mathbb{R}^n} |y|^{2m-2} (y\cdot \nabla_y f)(f+g) dy \right| \\
	&\quad \le \mu \int_{\mathbb{R}^n} |y|^{2m} |\nabla_yf|^2 dy
			+ 8\mu^{-1} \int_{\mathbb{R}^n} |y|^{2m-2} ( f^2+g^2 ) dy
\end{align*}
and
\begin{align*}
	\mu^{-1} \int_{\mathbb{R}^n} |y|^{2m-2} ( f^2+g^2 ) dy
	& = \mu^{-1} \int_{|y|>\mu^{-1}} |y|^{2m-2} (f^2+g^2) dy
		+ \mu^{-1} \int_{|y|\le \mu^{-1}} |y|^{2m-2} (f^2+g^2) dy \\
	& \le \mu \int_{|y| > \mu^{-1}} |y|^{2m} (f^2+g^2) dy
		+ \mu^{-2m+1} \int_{ |y| \le \mu^{-1}} (f^2 + g^2) dy\\
	& \le \mu \int_{\mathbb{R}^n} |y|^{2m} (f^2+g^2) dy
		+ \mu^{-2m+1} \int_{\mathbb{R}^n} (f^2 + g^2) dy.
\end{align*}
We take $\mu$ sufficiently small so that $\mu \ll \eta$ and then
$C_0, C_1$ sufficiently large so that $\mu^{-2m+1} \ll C_1 \ll C_0$.
Then, applying \eqref{est_f} to estimate the last term,
we have the desired estimate.
\end{proof}

Finally, we put
\begin{align*}
	E_5(s) = E_4(s) + \frac{1}{2}\alpha(s)^2
		+ \frac{e^{-s}}{b(t(s))^2}\alpha(s) \frac{d\alpha}{ds}(s).
\end{align*}
Then, we easily obtain
\begin{lemma}\label{lem_compatible}
There exists $s_2 \ge s_1$ such that we have 
\begin{align*}
	&E_5(s) \sim
	\| f(s) \|_{H^{1,m}}^2 + \frac{e^{-s}}{b(t(s))^2} \| g(s) \|_{H^{0,m}}^2
		+ \alpha(s)^2 +  \frac{e^{-s}}{b(t(s))^2} \left( \frac{d\alpha}{ds}(s) \right)^2,\\
	&E_5(s) + \| g(s) \|_{H^{0,m}}^2 + \left( \frac{d\alpha}{ds}(s) \right)^2
		\sim \| f(s) \|_{H^{1,m}}^2 +\| g(s) \|_{H^{0,m}}^2 + \alpha(s)^2
			+ \left( \frac{d\alpha}{ds}(s) \right)^2
\end{align*}
for $s\ge s_2$. 
\end{lemma}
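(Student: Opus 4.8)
The plan is to read off both equivalences from the already established equivalence \eqref{equi4} for $E_4$, by controlling the two extra terms $\tfrac12\alpha(s)^2$ and $\tfrac{e^{-s}}{b(t(s))^2}\alpha(s)\tfrac{d\alpha}{ds}(s)$ that distinguish $E_5$ from $E_4$. The only ingredient needed beyond \eqref{equi4} is the fact, contained in Lemma \ref{lem_b0b1}, that $\tfrac{e^{-s}}{b(t(s))^2}\to 0$ as $s\to\infty$; in particular there is $s_2\ge s_1$ with $\tfrac{e^{-s}}{b(t(s))^2}\le 1$ for all $s\ge s_2$, and $s_2$ may be enlarged further in the course of the argument.

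First I would dispose of the cross term. By Young's inequality, for any $\epsilon>0$,
\[
	\left| \frac{e^{-s}}{b(t(s))^2}\,\alpha(s)\,\frac{d\alpha}{ds}(s) \right|
	\le \epsilon\,\alpha(s)^2
		+ \frac{1}{4\epsilon}\left( \frac{e^{-s}}{b(t(s))^2} \right)^{2}
			\left( \frac{d\alpha}{ds}(s) \right)^{2}.
\]
Fixing $\epsilon$ small (depending only on the lower constant of the equivalence \eqref{equi4} and on $1/4$) and then enlarging $s_2$ so that $\tfrac{1}{4\epsilon}\tfrac{e^{-s}}{b(t(s))^2}\le\epsilon$ for $s\ge s_2$, the cross term is bounded by $\epsilon\alpha(s)^2+\epsilon\tfrac{e^{-s}}{b(t(s))^2}\big(\tfrac{d\alpha}{ds}(s)\big)^2$. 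Moreover, since $e^{-2\lambda s}\le 1$ for $s\ge 0$, the $\alpha$-term already present in $E_4$ (the $e^{-2\lambda s}\alpha(s)^2$ inside $E_3$) together with the new $\tfrac12\alpha(s)^2$ is comparable to $\alpha(s)^2$.

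Combining these observations with \eqref{equi4}, the negative contribution $-\epsilon\tfrac{e^{-s}}{b(t(s))^2}\big(\tfrac{d\alpha}{ds}\big)^2$ is absorbed into the corresponding positive term of $E_4$ and $-\epsilon\alpha^2$ into $\tfrac12\alpha^2$, which yields, for $s\ge s_2$,
\[
	E_5(s)\gtrsim \| f(s)\|_{H^{1,m}}^2
		+ \frac{e^{-s}}{b(t(s))^2}\| g(s)\|_{H^{0,m}}^2
		+ \alpha(s)^2
		+ \frac{e^{-s}}{b(t(s))^2}\Big(\frac{d\alpha}{ds}(s)\Big)^2,
\]
while the matching upper bound follows the same way using $e^{-2\lambda s}\le1$ and the upper part of \eqref{equi4}; this is the first equivalence. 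For the second, I would add $\|g(s)\|_{H^{0,m}}^2+\big(\tfrac{d\alpha}{ds}(s)\big)^2$ to both sides of the first equivalence and use once more that $0\le\tfrac{e^{-s}}{b(t(s))^2}\le1$ for $s\ge s_2$, so that $\tfrac{e^{-s}}{b(t(s))^2}\|g\|_{H^{0,m}}^2+\|g\|_{H^{0,m}}^2\sim\|g\|_{H^{0,m}}^2$ and similarly for the $\tfrac{d\alpha}{ds}$ term; this upgrades the weighted norms in the first equivalence to the unweighted ones and gives the claim.

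There is no genuine obstacle here: the statement is a bookkeeping lemma whose content is that, at the cost of taking $s_2$ large, the harmless lower-order pieces — the new $\tfrac12\alpha^2$, the cross term, and the vanishing factors $\tfrac{e^{-s}}{b(t(s))^2}$ — can be rearranged so that $E_5$ controls $\alpha^2$ rather than the weaker $e^{-2\lambda s}\alpha^2$. The only point demanding a little care is tracking the smallness parameter $\epsilon$ against the equivalence constant of \eqref{equi4} when the cross term is absorbed.
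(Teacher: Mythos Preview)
Your argument is correct and essentially the same as the paper's: both control the cross term by a Young/Schwarz split, absorb one piece into $\tfrac12\alpha^2$ and the other into the $\tfrac{e^{-s}}{b(t(s))^2}(\tfrac{d\alpha}{ds})^2$-part of $E_4$ via \eqref{equi4}, using the smallness of $\tfrac{e^{-s}}{b(t(s))^2}$ for large $s$. The only cosmetic difference is where you place the factor $\tfrac{e^{-s}}{b(t(s))^2}$ in the split, and your derivation of the second equivalence by adding $\|g\|_{H^{0,m}}^2+(\tfrac{d\alpha}{ds})^2$ and invoking $\tfrac{e^{-s}}{b(t(s))^2}\le 1$ is exactly what the paper means by ``directly proved from the first one.''
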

\begin{proof} 
By the Schwarz inequality, we have
\[
	\left| \frac{e^{-s}}{b(t(s))^2}\alpha(s) \frac{d\alpha}{ds}(s) \right|
	\le C(\tilde{\eta}) \frac{e^{-s}}{b(t(s))^2} \alpha(s)^2
		+ \tilde{\eta} \frac{e^{-s}}{b(t(s))^2} \left( \frac{d\alpha}{ds}(s) \right)^2,
\]
where
$\tilde{\eta} > 0$ is a small number determined later.
By the equivalence \eqref{equi4} of $E_4(s)$ and
taking $\tilde{\eta}$ sufficiently small,
we control the second term of the right-hand side and have
\[
	\tilde{\eta} \frac{e^{-s}}{b(t(s))^2} \left( \frac{d\alpha}{ds}(s) \right)^2
	\le \frac{1}{2} E_4(s)
\]
for $s \ge s_1$.
On the other hand, by Lemma \ref{lem_b0b1} and taking $s_2 \ge s_1$ sufficiently large,
we estimate the first term as
\[
	C(\tilde{\eta}) \frac{e^{-s}}{b(t(s))^2} \alpha(s)^2
	\le \frac{1}{4} \alpha(s)^2
\]
for $s \ge s_2$.
Combining them, we conclude that
\[
	E_5(s) \ge \frac12 E_4(s) + \frac14 \alpha(s)^2
\]
holds for $s \ge s_2$.
Then, using the lower bound \eqref{equi4} again,
we have the lower bound
\[
	\| f(s) \|_{H^{1,m}}^2 + \frac{e^{-s}}{b(t(s))^2} \| g(s) \|_{H^{0,m}}^2
		+ \alpha(s)^2 +  \frac{e^{-s}}{b(t(s))^2} \left( \frac{d\alpha}{ds}(s) \right)^2
	\le C E_5(s)
\]
for $s\ge s_2$.
The upper bound of $E_5(s)$ immediately follows from
the equivalence \eqref{equi4} of $E_4(s)$ and we have the first assertion.
The second assertion is also directly proved from the first one.
\end{proof}

By using \eqref{alpha_ddt}, we also have
\begin{align*}
	\frac{d}{ds}
		\left[ \frac{1}{2}\alpha(s)^2
			+\frac{e^{-s}}{b(t(s))^2}\alpha(s) \frac{d\alpha}{ds}(s) \right]
	&= \frac{e^{-s}}{b(t(s))^2}\left( \frac{d\alpha}{ds}(s) \right)^2
		- \frac{2}{b(t(s))^2}\alpha(s) \frac{db}{dt}(t(s)) \frac{d\alpha}{ds}(s) \\
	&\quad +\alpha(s) \left( \int_{\mathbb{R}^n} r(s,y)dy \right) \\
	&=: \tilde{R}_5(s). 
\end{align*}
Letting
$R_5(s) = R_4(s) + \tilde{R}_5(s)$,
we obtain
\begin{align}
\label{e5}
	\frac{d}{ds}E_5(s) + 2\lambda E_4(s) + L_4(s) = R_5(s).
\end{align}
We give an estimate for the remainder term $R_5(s)$:
\begin{lemma}[Estimate for the remainder terms]\label{lem_rema}
Let $\lambda_0, \lambda_1$ be
\begin{align}
\label{lambda_0}
	\lambda_0 = \min \left\{ \frac{1-\beta}{1+\beta},
						\frac{\gamma}{1+\beta}- \frac{1}{2}, \frac{\nu}{1+\beta}-1 \right\}
\end{align}
(where we interpret $1/(1+\beta)$ as an arbitrary large number when $\beta=-1$)
and
\begin{align}
\label{lambda_1}
	\lambda_1 = \begin{cases}
		\displaystyle \frac{1}{2} \min_{i=1,\ldots,k}
			\left\{ p_{i1} +2p_{i2} + \left( 3 - \frac{2\beta}{1+\beta} \right)p_{i3} - 3 \right\},
			& n=1,\\[7pt]
		\displaystyle \frac{n}{2} \left( p - 1- \frac{2}{n} \right), & n\ge 2
	\end{cases}
\end{align}
(where we interpret
$- 2\beta p_{i3}/(1+\beta)$
as an arbitrary large number when $p_{i3} \neq 0$ and $\beta = -1$).
Then, there exists $s_0 \ge s_2$ such that we have the following estimates: 
\begin{itemize}
\item[{\rm (i)}]
When $n =1$, $R_4(s)$ and $R_5(s)$ satisfy
\begin{align*}
	|R_4(s)| &\le \tilde{\eta} L_4(s)
		+ C(\tilde{\eta}) e^{-2\lambda_0 s} E_5(s) 
		+ C(\tilde{\eta}) e^{-2\lambda_1 s}
			\sum_{i=1}^k E_5(s)^{p_{i1}+p_{i2}}
				\left( E_5(s)^{p_{i3}} + L_4(s)^{p_{i3}} \right),\\
	|R_5(s)| &\le \tilde{\eta} L_4(s)
		+ C(\tilde{\eta}) e^{-\lambda_0 s} E_5(s)
		+ C(\tilde{\eta}) e^{-2\lambda_1 s}
			\sum_{i=1}^k E_5(s)^{p_{i1}+p_{i2}}
			\left( E_5(s)^{p_{i3}} + L_4(s)^{p_{i3}} \right) \\
		&\quad + C(\tilde{\eta}) e^{-\lambda_1 s}
			\sum_{i=1}^k
			E_5(s)^{(p_{i1}+p_{i2}+p_{i3}+1)/2}
\end{align*}
for $s \ge s_0$,
where $\tilde{\eta}>0$ is an arbitrary small number.
\item[{\rm (ii)}]
When $n \ge 2$, $R_4(s)$ and $R_5(s)$ satisfy
\begin{align*}
	|R_4(s)| &\le \tilde{\eta} L_4(s)
		+ C(\tilde{\eta}) e^{-2\lambda_0 s} E_5(s)
		+ C(\tilde{\eta}) e^{-2\lambda_1 s} E_5(s)^p,\\
	|R_5(s)| &\le \tilde{\eta} L_4(s)
		+ C(\tilde{\eta}) e^{-\lambda_0 s} E_5(s)
		+ C(\tilde{\eta}) e^{-2\lambda_1 s} E_5(s)^p
		+ C(\tilde{\eta}) e^{-\lambda_1 s} E_5(s)^{(p+1)/2}
\end{align*}
for $s \ge s_0$,
where $\tilde{\eta}>0$ is an arbitrary small number. 
\end{itemize}
\end{lemma}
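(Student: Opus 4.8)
The plan is to expand $R_4=C_0R_0+C_1R_1+R_2+R_3$ and $R_5=R_4+\tilde R_5$ term by term, using the formulas of Lemmas \ref{lem_en0} ($n=1$) and \ref{lem_en20} ($n\ge2$), of \eqref{rp}, and of $\tilde R_5$, and then to sort each term by the time weight it carries into three groups: (a) terms with prefactor $\tfrac{e^{-s}}{b(t(s))^2}$ or $\tfrac{1}{b(t(s))^2}\bigl|\tfrac{db}{dt}(t(s))\bigr|$; (b) terms inherited, through $h$ in \eqref{h} and through $\int_{\mathbb R^n}r\,dy$, from the linear perturbations $c(t)\cdot\nabla_yv$ and $d(t)v$ of $r$; (c) terms inherited from the nonlinearity $N$ of $r$. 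Lemma \ref{lem_b0b1}(i) bounds the group-(a) prefactors by $Ce^{-(1-\beta)s/(1+\beta)}\le Ce^{-\lambda_0 s}$ and gives $e^{s/2}|c(t(s))|\le Ce^{-(\gamma/(1+\beta)-1/2)s}\le Ce^{-\lambda_0 s}$ and $e^{s}|d(t(s))|\le Ce^{-(\nu/(1+\beta)-1)s}\le Ce^{-\lambda_0 s}$ for group (b), with the obvious change from Lemma \ref{lem_b0b1}(ii) when $\beta=-1$. The bookkeeping device used throughout is: by Lemma \ref{lem_l4} one has $L_4(s)\ge C(\|f\|_{H^{1,m}}^2+\|g\|_{H^{0,m}}^2+(\tfrac{d\alpha}{ds})^2)$, while \eqref{equi4} and Lemma \ref{lem_compatible} give $\alpha(s)^2\le CE_5(s)$ and $e^{-2\lambda s}\alpha(s)^2\le E_3(s)\le E_5(s)$; hence every $L^2$-type seminorm of $f,\nabla_yf,g$ or $\tfrac{d\alpha}{ds}$ — and, through Lemma \ref{lem_hardy} for $n=1$ or Lemma \ref{lem_hardy2} for $n\ge2$, the corresponding seminorm of $F,G$ resp.\ $\hat F,\hat G$ — is at most $C\sqrt{L_4(s)}$, every occurrence of $\alpha$ is at most $C\sqrt{E_5(s)}$, and the fixed profiles $\varphi_0,\psi_0,\nabla_y\varphi_0,\nabla_y\psi_0$ contribute only constants.

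Groups (a) and (b) then reduce to two elementary moves. A product of two $L_4$-controlled factors with a prefactor $\le\tilde\eta$ for $s\ge s_0$ is absorbed into $\tilde\eta L_4(s)$; a product of one $\alpha$-factor and one $L_4$-controlled factor with a prefactor $\le Ce^{-\lambda_0 s}$ is split by Young's inequality into $\tilde\eta L_4(s)+C(\tilde\eta)e^{-2\lambda_0 s}E_5(s)$ — the term $\tfrac{e^{-s}}{b(t(s))^2}\alpha\psi_0$ of $h$, paired against an $L_4$-test function, has exactly this shape and is the source of the $e^{-2\lambda_0 s}E_5$ contribution to $R_4$. Two structural points make the argument close: the $\alpha\varphi_0$-part of the $d(t)$-perturbation cancels against $(\int_{\mathbb R^n}r\,dy)\varphi_0$ in \eqref{h} and the $c(t)$-perturbation integrates to zero in $\int_{\mathbb R^n}r\,dy$, so both perturbations enter $h$ only through their $f$- and $\nabla_yf$-parts; and the genuinely $\alpha^2$-type terms, which would only produce the weaker $e^{-\lambda_0 s}E_5$, all sit in $\tilde R_5$ — namely $\alpha\int_{\mathbb R^n}r\,dy$ and $b^{-2}\alpha\,\tfrac{db}{dt}\,\tfrac{d\alpha}{ds}$ — which is why only $R_5$ acquires that term. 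When $n\ge2$, the $\hat H$-terms of $R_0$ are first converted to weighted $L^2$-norms of $h$ by Lemma \ref{lem_hardy2} (legitimate because $\hat h(0)=0$ by \eqref{h_int}), and any leftover $\|f\|_{L^2}$-type quantity is absorbed via \eqref{est_f}, after which the reasoning is uniform in $n$.

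The substance of the proof, and the expected main obstacle, is group (c). The first step is the scaling count: substituting the rescalings of \eqref{r} and $b(t(s))\sim e^{-\beta s/(1+\beta)}$ from Lemma \ref{lem_b0b1}(i) gives, for $n=1$,
\[
  e^{3s/2}\bigl|N_i\bigl(e^{-s/2}v,\,e^{-s}\nabla_yv,\,b(t(s))^{-1}e^{-3s/2}w\bigr)\bigr|
  \le Ce^{-\lambda_{1,i}s}\,|v|^{p_{i1}}|\nabla_yv|^{p_{i2}}|w|^{p_{i3}}
\]
with $\lambda_{1,i}=\tfrac12\bigl(p_{i1}+2p_{i2}+(3-\tfrac{2\beta}{1+\beta})p_{i3}-3\bigr)\ge\lambda_1$, and for $n\ge2$ the analogous bound $Ce^{-\lambda_1 s}|v|^p$ with $\lambda_1=\tfrac n2(p-1-\tfrac2n)$; if $\beta=-1$ and some $p_{i3}\ne0$ one uses instead $b(t(s))\sim\exp(e^s)$ from Lemma \ref{lem_b0b1}(ii), which suppresses that factor super-exponentially and lets $\lambda_{1,i}$ be taken arbitrarily large. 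The second step is to insert $v=\alpha\varphi_0+f$, $\nabla_yv=\alpha\nabla_y\varphi_0+\nabla_yf$, $w=\tfrac{d\alpha}{ds}\varphi_0+\alpha\psi_0+g$, to pair the nonlinear part of $h$ against the relevant $L_4$-test function (for $R_4$) or against $\alpha$, resp.\ $\tfrac{d\alpha}{ds}$ (for the extra pieces of $\tilde R_5$ and $R_3$), and to bound the resulting multilinear integrals by H\"older together with the Sobolev embedding $H^1(\mathbb R)\hookrightarrow L^\infty(\mathbb R)$ for $n=1$ — here the hypothesis $p_{i2}+p_{i3}\le1$ in \eqref{N1} is essential since it keeps the $\nabla_yf$- and $g$-factors at total power $\le1$ and so avoids the unavailable $\|\nabla_yf\|_{L^\infty}$, $\|g\|_{L^\infty}$ — and by the Gagliardo--Nirenberg inequality of Lemma \ref{lem_gn} for $n\ge2$, where $p\le n/(n-2)$ is precisely what makes $L^{2p}$ reachable from $H^1$. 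Replacing each norm of $f,\nabla_yf,g,\tfrac{d\alpha}{ds}$ by $\sqrt{L_4}$ and each norm of $\alpha$ by $\sqrt{E_5}$, and then using Young's inequality to free one factor $\sqrt{L_4}$ into $\tilde\eta L_4$, produces the claimed $e^{-2\lambda_1 s}$-terms for $R_4$; the nonlinear pieces of $\tilde R_5$ coming from $\alpha\int_{\mathbb R^n}r\,dy$ have one fewer $L_4$-factor to spend and so give the weaker $e^{-\lambda_1 s}E_5^{(p+1)/2}$ (resp.\ $\sum_i e^{-\lambda_1 s}E_5^{(p_{i1}+p_{i2}+p_{i3}+1)/2}$), which only enters $R_5$. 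One finishes by enlarging $s_0\ge s_2$ so that all the ``$s$ large'' thresholds used above hold. The care-intensive parts are the exact matching of scaling exponents with $\lambda_1$, the weighted estimates for products of the various spectral pieces, and the $\beta=-1$ degeneration.
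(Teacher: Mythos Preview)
Your approach is correct and essentially the same as the paper's: the paper packages the estimates for $\|r\|_{H^{0,m}}$, $\|h\|_{H^{0,m}}$ and $\|H\|_{L^2}$ into separate preliminary lemmas (Lemmas~\ref{lem_nl}--\ref{lem_caph}) and then combines them via Schwarz in the proof of Lemma~\ref{lem_rema}, while you carry out the same bookkeeping inline, sorting terms by their time prefactor. Two small corrections are worth noting. First, the $c(t)$-perturbation does \emph{not} enter $h$ only through $\nabla_y f$: since $\int_{\mathbb{R}^n}\nabla_y v\,dy=0$, nothing is subtracted from $e^{s/2}c(t(s))\cdot\nabla_y v$ in \eqref{h}, so the piece $e^{s/2}c(t(s))\cdot\alpha\nabla_y\varphi_0$ survives --- but your ``one $\alpha$-factor times one $L_4$-factor $\Rightarrow \tilde\eta L_4+C(\tilde\eta)e^{-2\lambda_0 s}E_5$'' move handles it anyway. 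Second, to get the nonlinear contribution in the stated form $E_5^{p_{i1}+p_{i2}}(E_5^{p_{i3}}+L_4^{p_{i3}})$ (resp.\ $E_5^{p}$) you must bound the $v$- and $\nabla_y v$-factors using $\|f\|_{H^{1,m}}^2+\alpha^2\le CE_5$ from Lemma~\ref{lem_compatible}, not your convention ``$\|f\|,\|\nabla_y f\|\le C\sqrt{L_4}$'', which would leave spurious powers of $L_4$; only the $w$-factor genuinely needs $\|g\|_{H^{0,m}}^2+(d\alpha/ds)^2\le CL_4$, which is why $L_4^{p_{i3}}$ appears.
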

We postpone the proof of this lemma until the next section,
and now we completes the
proofs of Proposition \ref{prop_ap} and Theorem \ref{thm}.
We first consider the case $n \ge 2$.
Taking $\tilde{\eta} = 1/2$ in Lemma \ref{lem_rema} and
using \eqref{e5} and Lemmas \ref{lem_l4}, \ref{lem_compatible}, we have
\begin{align}
\label{este52}
	\frac{d}{ds}E_5(s)
	\le C e^{-\lambda_0 s} E_5(s)
		+ C e^{-2\lambda_1 s} E_5(s)^p
		+ C e^{-\lambda_1 s} E_5(s)^{(p+1)/2}
\end{align}
for $s \ge s_0$.
Let
\[
	\Lambda (s) := \exp \left( -C\int_{s_0}^s e^{-\lambda_0 \tau}\, d\tau \right).
\]
We note that
$e^{-Ce^{-\lambda_0 s_0}/\lambda_0} \le \Lambda (s) \le 1$
for $s \ge s_0$ and $\Lambda(s_0) = 1$.
Multiplying \eqref{este52} by $\Lambda(s)$
and integrating it over $[s_0, s]$, we see that
\[
	\Lambda(s) E_5(s) \le E_5(s_0)
		+ C \int_{s_0}^s
		\left[ \Lambda(\tau) e^{-2\lambda_1 \tau} E_5(\tau)^p
			+ \Lambda(\tau) e^{-\lambda_1 \tau} E_5(\tau)^{(p+1)/2}
						\right] \, d\tau
\]
holds for $s \ge s_0$.
Putting
\[
	M(s) := \sup_{s_0 \le \tau \le s} E_5(\tau),
\]
we further obtain
\begin{align}
\label{estm}
	M(s) \le C M(s_0)
		+ C(s_0, \lambda_0, \lambda_1) \left( M(s)^p + M(s)^{(p+1)/2} \right)
\end{align}
for $s \ge s_0$.
On the other hand, we easily estimate $M(s_0)$ as
\begin{align}
\nonumber
	M(s_0) &\le C(s_0) \left( \| (f(s_0), g(s_0)) \|_{H^{1,m}\times H^{0,m}}^2
				+ \alpha(s_0)^2 + \dot{\alpha}(s_0)^2 \right) \\
\nonumber
		& \le C(s_0) \| (v(s_0), w(s_0)) \|_{H^{1,m}\times H^{0,m}}^2 \\
\label{este5}
	&\le C(s_0) \varepsilon^2 \| (v_0, w_0) \|_{H^{1,m}\times H^{0,m}}^2
\end{align}
by using the local existence result
(see the proof of Proposition \ref{prop_loc}).
Combining \eqref{estm} with \eqref{este5}, we have
\[
	M(s) \le
		C_2 \varepsilon^2 \| (v_0, w_0) \|_{H^{1,m}\times H^{0,m}}^2 
		+ C_2 \left( M(s)^p + M(s)^{(p+1)/2} \right)
\]
for $s \ge s_0$ with some constant $C_2 >0$.
Let $\varepsilon_1$ be
\[
	\varepsilon_1 := \left( \sqrt{C_2} 2^{(p+1)/4} \right)^{-1}.
\]
Then,
a direct calculation implies
\begin{align*}
	2 C_2 \varepsilon^2 I_0
		> C_2 \varepsilon^2 I_0
		+ C_2 \left[
			(2C_2 \varepsilon^2 I_0 )^p
			+(2C_2 \varepsilon^2 I_0 )^{(p+1)/2}
		\right]
\end{align*}
holds for $\varepsilon \in (0,\varepsilon_1]$,
where $I_0=\| (v_0, w_0) \|_{H^{1,m}\times H^{0,m}}^2$.
Combining this with
$M(s_0) \le C_2 \varepsilon^2 \| (v_0, w_0) \|_{H^{1,m}\times H^{0,m}}^2$
and the continuity of
$M(s)$ with respect to $s$, we conclude that
\begin{align}
\label{est_m}
	M(s) \le 2 C_2\varepsilon^2 \| (v_0, w_0) \|_{H^{1,m}\times H^{0,m}}^2
\end{align}
holds for
$s \ge s_0$ and $\varepsilon \in (0,\varepsilon_1]$.
Therefore, from Lemma \ref{lem_compatible}, we obtain
\[
	\| f(s) \|_{H^{1,m}}^2 + \frac{e^{-s}}{b(t(s))^2} \| g(s) \|_{H^{0,m}}^2
		+ \alpha(s)^2 +  \frac{e^{-s}}{b(t(s))^2} \left( \frac{d\alpha}{ds}(s) \right)^2
	\le C \varepsilon^2 \| (v_0, w_0) \|_{H^{1,m}\times H^{0,m}}^2
\]
for $s \ge s_0$ and $\varepsilon \in (0,\varepsilon_1]$.
This implies
\begin{align}
\label{aprivw}
	\| v(s) \|_{H^{1,m}}^2 + \frac{e^{-s}}{b(t(s))^2} \| w(s) \|_{H^{0,m}}^2
	\le C_{\ast} \varepsilon^2 \| (v_0, w_0) \|_{H^{1,m}\times H^{0,m}}^2
\end{align}
with some constant
$C_{\ast}>0$.
This completes the proof of Proposition \ref{prop_ap}.

When $n=1$, to control the additional term
$E_5(s)^{p_{i1}}L_4(s)$ appearing in the estimate of $R_5(s)$,
we use \eqref{e5} as
\begin{align*}
	\frac{d}{ds}E_5(s)
	+ L_4(s)
	&\le C e^{-\lambda_0 s} E_5(s)
		+ C(\tilde{\eta}) e^{-2\lambda_1 s}
			\sum_{i=1}^k E_5(s)^{p_{i1}+p_{i2}}
			\left( E_5(s)^{p_{i3}} + L_4(s)^{p_{i3}} \right) \\
		&\quad + C(\tilde{\eta}) e^{-\lambda_1 s}
			\sum_{i=1}^k
			E_5(s)^{(p_{i1}+p_{i2}+p_{i3}+1)/2}
\end{align*}
instead of \eqref{este52}.
In the same way as before, we multiply the both sides
by $\Lambda(s)$ and integrate it over $[s_0, s]$
to obtain
\begin{align*}
	&\Lambda(s) E_5(s)
		+ \int_{s_0}^s \Lambda (\tau) L_4(\tau)\,d\tau \\
	& \quad \le E_5(s_0)
		+ C \sum_{\substack{i=1,\ldots, k\\ p_{i3}=1}} \int_{s_0}^s
			\Lambda(\tau) e^{-2\lambda_1 \tau}
				E_5(\tau)^{p_{i1}+p_{i2}}L_4(\tau)^{p_{i3}}d\tau \\
	&\qquad + C \sum_{i=1}^k \int_{s_0}^s
		\left[ \Lambda(\tau) e^{-2\lambda_1 \tau} E_5(\tau)^{p_{i1}+p_{i2}+p_{i3}}
			+ \Lambda(\tau) e^{-\lambda_1 \tau}
				E_5(\tau)^{(p_{i1}+p_{i2}+p_{i3}+1)/2}
						\right] \, d\tau.
\end{align*}
As before, putting
$M(s) := \sup_{s_0 \le \tau \le s} E_5(\tau)$
and noting that $\Lambda(s)$ is bounded by both above and below,
we see that
\begin{align*}
	M(s)
		+ \int_{s_0}^s L_4(\tau)\,d\tau
	&\le C_2 \varepsilon^2  \| (v_0, w_0) \|_{H^{1,m}\times H^{0,m}}^2
		+ C_2 \sum_{\substack{i=1,\ldots, k\\ p_{i3}=1}}
				M(s)^{p_{i1}+p_{i2}} \int_{s_0}^s
				L_4(\tau)^{p_{i3}}d\tau \\
	&\quad + C_2 \sum_{i=1}^k
		\left( M(s)^{p_{i1}+p_{i2}+p_{i3}}
			+ M(s)^{(p_{i1}+p_{i2}+p_{i3}+1)/2} \right)
\end{align*}
for $s \ge s_0$ with some constant $C_2 >0$. 
Taking $\varepsilon_1$ sufficiently small so that
\begin{align*}
	2C_2 \varepsilon^2 I_0
		+ \int_{s_0}^s L_4(\tau)\,d\tau
	&> C_2 \varepsilon^2 I_0
		+ C_2 \sum_{\substack{i=1,\ldots, k\\ p_{i3}=1}}
			(2C_2 \varepsilon^2 I_0)^{p_{i1}+p_{i2}} \int_{s_0}^s
				L_4(\tau)^{p_{i3}}d\tau \\
	&\quad + C_2 \sum_{i=1}^k
		\left( (2C_2 \varepsilon^2 I_0)^{p_{i1}+p_{i2}+p_{i3}}
		+ (2C_2 \varepsilon^2 I_0)^{(p_{i1}+p_{i2}+p_{i3}+1)/2} \right)
\end{align*}
holds for $\varepsilon \in (0,\varepsilon_1]$,
where $I_0 = \| (v_0, w_0) \|_{H^{1,m}\times H^{0,m}}^2$.
Combining this with
$M(s_0) \le C_2 \varepsilon^2 I_0$
and the continuity of
$M(s)$ with respect to $s$, we conclude that
\begin{align*}
	M(s) \le 2 C_2 \varepsilon^2 \| (v_0, w_0) \|_{H^{1,m}\times H^{0,m}}^2,
\end{align*}
which leads to \eqref{aprivw}
and completes the proof of Proposition \ref{prop_ap} for $n=1$.

\subsection{Proof of Theorem \ref{thm}: asymptotic behavior}
Next, we prove the asymptotic behavior \eqref{sol_asym}.
For simplicity, we only consider the case $n \ge 2$,
since the proof of the one-dimensional case is similar.
Putting
\begin{align}
\label{lambda}
	\lambda = \min \left\{ \frac{1}{2}, \frac{m}{2}-\frac{n}{4},
		\lambda_0,\lambda_1 \right\} -\eta,
\end{align}
where $\eta>0$ is an arbitrary small number,
and $\lambda_0,\lambda_1$ are defined by
\eqref{lambda_0}, \eqref{lambda_1},
and turning back to \eqref{en3_est}
and using Lemma \ref{lem_rema} with $\tilde{\eta}=\frac{1}{2}$ to $R_4(s)$,
we have
\begin{align*}
	\frac{d}{ds}E_4(s) + 2 \lambda E_4(s)
		+ \frac{1}{2}L_4(s)
		&\le C e^{-2\lambda_0 s} E_5(s)
		+ C e^{-2\lambda_1 s} E_5(s)^p\\
		&\le C e^{-2\lambda_2 s} \varepsilon^2 \| (u_0, u_1) \|_{H^{1,m}\times H^{0,m}}^2,
\end{align*}
where
$\lambda_2 = \min\{ \lambda_0, \lambda_1 \}$.
Multiplying the above inequality by $e^{2 \lambda s}$, we obtain
\begin{align*}
	\frac{d}{ds}\left[ e^{2 \lambda s} E_4(s) \right]
		+ \frac{e^{2 \lambda s}}{2} L_4(s)
		\le C e^{-2 \eta s} \varepsilon^2 \| (u_0, u_1) \|_{H^{1,m}\times H^{0,m}}^2.
\end{align*}
Integrating it over $[s_0, s]$ and using Lemma \ref{lem_l4}, we have
\begin{align*}
	E_4(s)
		+ \int_{s_0}^s e^{-2\lambda (s-\tau)}
			\left( \| f(s) \|_{H^{1,m}}^2 + \| g(s) \|_{H^{0,m}}^2
				+ \left( \frac{d\alpha}{d\tau}(\tau) \right)^2 \right) d\tau
		\le Ce^{-2\lambda s}
			\varepsilon^2 \| (u_0, u_1) \|_{H^{1,m}\times H^{0,m}}^2.
\end{align*}
In particular, for $s_0 \le \tilde{s} \le s$, one has 
\begin{align*}
	| \alpha(s)- \alpha(\tilde{s}) |^2
	&= \left( \int_{\tilde{s}}^s \left( \frac{d\alpha}{d\tau}(\tau) \right)^2 d\tau \right)^2\\
	&\le \left( \int_{\tilde{s}}^s e^{- 2\lambda \tau} d\tau \right)
		\left( \int_{\tilde{s}}^s e^{ 2\lambda \tau}
			\left( \frac{d\alpha}{d\tau}(\tau) \right)^2 d\tau \right)\\
	& \le Ce^{-2\lambda \tilde{s}}
		\varepsilon^2 \| (u_0, u_1) \|_{H^{1,m}\times H^{0,m}}^2,
\end{align*}
and hence, the limit
$\alpha^{\ast} = \lim_{s\to +\infty}\alpha(s)$
exists and it follows that
\begin{align*}
	| \alpha(s) - \alpha^{\ast} |^2
		\le C e^{-2\lambda s}
		\varepsilon^2 \| (u_0, u_1) \|_{H^{1,m}\times H^{0,m}}^2.
\end{align*}
Finally, we have
\begin{align*}
	\| v(s) - \alpha^{\ast}\varphi_0 \|_{H^{1,m}}^2
	\le \| f(s) \|_{H^{1,m}}^2 + |\alpha(s) - \alpha^{\ast} |^2 \| \varphi_0 \|_{H^{1,m}}^2
	\le C e^{-2\lambda s} \varepsilon^2 \| (u_0, u_1) \|_{H^{1,m}\times H^{0,m}}^2.
\end{align*}
Recalling the relation \eqref{uvw} and
$(B(t)+1)^{-n/2} \varphi_0((B(t)+1)^{-1/2}x) = \mathcal{G}(B(t)+1,x)$,
where $\mathcal{G}$ is the Gaussian defined by \eqref{gauss},
we obtain
\[
	\| u(t,\cdot ) - \alpha^{\ast} \mathcal{G}(B(t)+1, \cdot) \|_{L^2}^2
	\le C \varepsilon^2 (B(t)+1)^{-n/2-2\lambda} \| (u_0, u_1) \|_{H^{1,m}\times H^{0,m}}^2,
\]
which completes the proof of Theorem \ref{thm}.


\section{Estimates of the remainder terms}
In this section, we give a proof to Lemma \ref{lem_rema}.

\begin{lemma}\label{lem_nl}
Under the assumptions \eqref{b0}, \eqref{N1} and \eqref{N2}, we have
\begin{align}
\nonumber
	&\left\| e^{3s/2}
		 	N\left( e^{-s/2}v,
						e^{-s} v_y,
						b(t(s))^{-1}e^{-3s/2}w \right) \right\|_{H^{0,1}}^2\\
\label{est_nl1}
	&\quad\le C e^{- 2 \lambda_1 s}
		\sum_{i=1}^k
			\left( \|f(s)\|_{H^{1,1}} + \alpha(s) \right)^{2(p_{i1}+p_{i2})}
			\left( \|g(s)\|_{H^{0,1}} + \alpha(s) + \frac{d\alpha}{ds}(s) \right)^{2p_{i3}}
\end{align}
for $n=1$, $s \ge 0$ and
\begin{align}
\label{est_nl2}
	\left\| e^{\left(\frac{n}{2}+1\right)s}
		 	N\left( e^{-\frac{n}{2}s}v \right) \right\|_{H^{0,m}}^2
		\le C e^{- 2 \lambda_1 s}
		\left( \|f(s)\|_{H^{1,m}} + \alpha(s)  \right)^{2p}
\end{align}
for $n\ge 2$, $s \ge 0$, where $\lambda_1$ is given by \eqref{lambda_1}.
\end{lemma}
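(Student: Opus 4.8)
\emph{Proof plan.} Both estimates have the same structure, so I would first isolate the purely exponential factor coming from the scaling and reduce to a weighted $L^2$-bound on the nonlinear expression itself. For $n=1$ and a single summand $N_i$, the pointwise bound in \eqref{N1} gives
\[
	e^{3s/2}\bigl|N_i\bigl(e^{-s/2}v,e^{-s}v_y,b(t(s))^{-1}e^{-3s/2}w\bigr)\bigr|
	\le C\,e^{(3-p_{i1}-2p_{i2}-3p_{i3})s/2}\,b(t(s))^{-p_{i3}}\,|v|^{p_{i1}}|v_y|^{p_{i2}}|w|^{p_{i3}},
\]
and since $b(t(s))^{-p_{i3}}\sim e^{\beta p_{i3}s/(1+\beta)}$ when $-1<\beta<1$ (Lemma \ref{lem_b0b1}), the prefactor is $\le C e^{-\lambda_1^{(i)}s}\le Ce^{-\lambda_1 s}$ with $\lambda_1^{(i)}=\tfrac12\bigl(p_{i1}+2p_{i2}+(3-\tfrac{2\beta}{1+\beta})p_{i3}-3\bigr)$; for $n\ge2$ the same computation produces the prefactor $e^{(n+2-np)s/2}=e^{-\lambda_1 s}$ in front of $|v|^p$. (If $\beta=-1$ and $p_{i3}\neq0$ then $b(t(s))^{-1}\sim\exp(-e^s)$ beats every power $e^{-Cs}$, which is exactly the ``arbitrarily large $\lambda_1$'' convention; if $p_{i3}=0$ no factor of $b$ occurs and nothing changes.) After squaring it therefore remains to estimate $\|\langle y\rangle\,|v|^{p_{i1}}|v_y|^{p_{i2}}|w|^{p_{i3}}\|_{L^2}$ for $n=1$ and $\|\langle y\rangle^m|v|^p\|_{L^2}$ for $n\ge2$, and in both cases I would substitute the spectral decomposition $v=\alpha\varphi_0+f$, $v_y=\alpha\varphi_0'+f_y$, $w=\tfrac{d\alpha}{ds}\varphi_0+\alpha\psi_0+g$, using that $\varphi_0,\varphi_0',\psi_0$ have finite $H^{1,m}$, $H^{0,m}$ and $L^\infty$ norms (Gaussians); below $\alpha$ and $\tfrac{d\alpha}{ds}$ stand for their absolute values.

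For $n=1$, the conditions $p_{ij}\in\{0\}\cup[1,\infty)$ and $p_{i2}+p_{i3}\le1$ force $(p_{i2},p_{i3})\in\{(0,0),(1,0),(0,1)\}$, so only three cases arise. In each I would put the weight $\langle y\rangle$ on a single distinguished factor and the rest in $L^\infty$ via the Sobolev embedding $H^1(\mathbb{R})\hookrightarrow L^\infty(\mathbb{R})$: for instance $\|\langle y\rangle\,|v|^{p_{i1}}|v_y|\|_{L^2}\le\|v\|_{L^\infty}^{p_{i1}}\|v_y\|_{H^{0,1}}$ in the case $(1,0)$, $\|\langle y\rangle\,|v|^{p_{i1}}|w|\|_{L^2}\le\|v\|_{L^\infty}^{p_{i1}}\|w\|_{H^{0,1}}$ in the case $(0,1)$, and $\|\langle y\rangle\,|v|^{p_{i1}}\|_{L^2}\le\|v\|_{L^\infty}^{p_{i1}-1}\|v\|_{H^{0,1}}$ in the case $(0,0)$ (here the hypothesis $p_{i1}>1$ is used). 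It then remains to insert the decomposition and use $\|v\|_{L^\infty}+\|v\|_{H^{0,1}}+\|v_y\|_{H^{0,1}}\le C(\|f\|_{H^{1,1}}+\alpha)$ and $\|w\|_{H^{0,1}}\le C(\|g\|_{H^{0,1}}+\alpha+\tfrac{d\alpha}{ds})$, which yields \eqref{est_nl1}.

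The case $n\ge2$ is the real difficulty, because $H^1(\mathbb{R}^n)$ no longer embeds into $L^\infty$ and the pointwise $L^\infty$ trick is unavailable; this is exactly why the subcriticality condition $p\le n/(n-2)$ is imposed. Using $|v|^p\le C(\alpha^p\varphi_0^p+|f|^p)$ and the finiteness of $\|\langle y\rangle^m\varphi_0^p\|_{L^2}$, I would reduce to proving $\|\langle y\rangle^m|f|^p\|_{L^2}\le C\|f\|_{H^{1,m}}^p$. The key device is to distribute the weight evenly: since $m/p\le m$,
\[
	\bigl\|\langle y\rangle^m|f|^p\bigr\|_{L^2}^2=\bigl\|\langle y\rangle^{m/p}f\bigr\|_{L^{2p}}^{2p},
\]
and $\langle y\rangle^{m/p}f\in H^1(\mathbb{R}^n)$ with $\|\langle y\rangle^{m/p}f\|_{H^1}\le C\|f\|_{H^{1,m}}$, because $\nabla(\langle y\rangle^{m/p}f)=\langle y\rangle^{m/p}\nabla f+\tfrac{m}{p}\langle y\rangle^{m/p-2}y\,f$ and $\langle y\rangle^{m/p}+\langle y\rangle^{m/p-1}\le 2\langle y\rangle^m$. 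The Gagliardo--Nirenberg inequality of Lemma \ref{lem_gn}, whose admissible range for $p$ coincides precisely with that in \eqref{N2}, then gives $\|\langle y\rangle^{m/p}f\|_{L^{2p}}\le C\|f\|_{H^{1,m}}$, hence $\|\langle y\rangle^m|f|^p\|_{L^2}\le C\|f\|_{H^{1,m}}^p$ and $\|\langle y\rangle^m|v|^p\|_{L^2}\le C(\|f\|_{H^{1,m}}+\alpha)^p$. Multiplying by the prefactor $e^{-\lambda_1 s}$ and squaring gives \eqref{est_nl2}. What remains is routine: the exact exponent arithmetic, a standard density argument to justify $\langle y\rangle^{m/p}f\in H^1$, collecting the polynomial factors, and the degenerate case $\beta=-1$ noted above.
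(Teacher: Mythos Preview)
Your proposal is correct and follows essentially the same approach as the paper: extract the exponential prefactor from the scaling, then for $n=1$ place the weight $\langle y\rangle$ on one factor and control the rest by the Sobolev embedding $H^1(\mathbb{R})\hookrightarrow L^\infty$, while for $n\ge 2$ apply Gagliardo--Nirenberg to $\langle y\rangle^{m/p}$ times the solution. The only cosmetic differences are that the paper handles the $n=1$ case by a single H\"older inequality with exponents $(1-p_{i2}-p_{i3},p_{i2},p_{i3})$ rather than your three-case split (equivalent, since $(p_{i2},p_{i3})\in\{(0,0),(1,0),(0,1)\}$ as you noted), and for $n\ge2$ it applies Gagliardo--Nirenberg directly to $\langle y\rangle^{m/p}v$ and only afterwards invokes $\|v\|_{H^{1,m}}\le C(\|f\|_{H^{1,m}}+\alpha)$, whereas you decompose $|v|^p\le C(\alpha^p\varphi_0^p+|f|^p)$ first.
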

\begin{proof}
When $n=1, \beta\in (-1,1)$, by the assumption \eqref{N1} and Lemma \ref{lem_b0b1},
we compute
\begin{align*}
	(1+y^2)e^{3s}N_i \left( e^{-s/2}v, e^{-s}v_y, b(t(s))^{-1}e^{-3s/2}w \right)^2 
	\le C (1+y^2) e^{- 2 \lambda_1 s} | v |^{2p_{i1}} |v_y |^{2p_{i2}} |w|^{2p_{i3}},
\end{align*}
where $\lambda_1$ is defined by \eqref{lambda_1}.
By the Sobolev inequality $\| v(s) \|_{L^{\infty}} \le C \| v(s) \|_{H^{1,0}}$,
we calculate
\begin{align*}
	&(1+y^2) e^{- 2 \lambda_1 s} | v |^{2p_{i1}} |v_y |^{2p_{i2}} |w|^{2p_{i3}} \\
	&\quad \le C e^{- 2 \lambda_1 s} | v^2 |^{p_{i1}+p_{i2}+p_{i3}-1}
		( (1+y^2)v^2 )^{1-p_{i2}-p_{i3}} ((1+y^2)v_y^2)^{p_{i2}} ((1+y^2)w^2)^{p_{i3}}\\
	&\quad \le C e^{- 2 \lambda_1 s} \| v(s) \|_{H^{1,0}}^{2( p_{i1}+p_{i2}+p_{i3}-1 )}
		( (1+y^2)v^2 )^{1-p_{i2}-p_{i3}} ((1+y^2)v_y^2)^{p_{i2}} ((1+y^2)w^2)^{p_{i3}}.
\end{align*}
Therefore, by the H\"{o}lder inequality, we conclude
\begin{align*}
	&\left\| e^{3s/2} N_i
		\left( e^{-s/2}v, e^{-s}v_y, b(t(s))^{-1}e^{-3s/2}w \right) \right\|_{H^{0,1}}^2 \\
	&\quad \le C  e^{- 2 \lambda_1 s}
	\| v(s) \|_{H^{1,0}}^{2( p_{i1}+p_{i2}-1 )}
	\| v(s) \|_{H^{1,1}}^{2(1-p_{i2})}
		\| v(s) \|_{H^{1,1}}^{2p_{i2}} \| w(s) \|_{H^{0,1}}^{2p_{i3}}\\
	&\quad \le C e^{- 2 \lambda_1 s}
			\left( \|f(s)\|_{H^{1,1}} + \alpha(s) \right)^{2(p_{i1}+p_{i2})}
			\left( \|g(s)\|_{H^{0,1}} + \alpha(s) + \frac{d\alpha}{ds}(s) \right)^{2p_{i3}}
\end{align*}

When $n=1, p_{i3}\neq 0, \beta = -1$, we obtain
\begin{align*}
	&(1+y^2)e^{3s}N_i \left( e^{-s/2}v, e^{-s}v_y, b(t(s))^{-1}e^{-3s/2}w \right)^2 \\
	&\quad \le C (1+y^2) e^{(3-p_{i1}-2p_{i2}-3p_{i3})s}
		b(t(s))^{-p_{i3}} | v |^{2p_{i1}} |v_y |^{2p_{i2}} |w|^{2p_{i3}}\\
	&\quad \le C (1+y^2) e^{-\lambda_{i1}s} | v |^{2p_{i1}} |v_y |^{2p_{i2}} |w|^{2p_{i3}},
\end{align*}
where we can take $\lambda_{i1}$ as an arbitrary large number,
since Lemma \ref{lem_b0b1} shows $b(t(s))^{-p_{i3}} \sim \exp ( -p_{i3} e^s )$.
Therefore, by the same way, we obtain the desired estimate.

Next, we consider the case $n\ge 2$.
By the assumption \eqref{N2} and
Lemma \ref{lem_gn}, we have
\begin{align*}
	\left\| e^{\left(\frac{n}{2}+1\right)s}
		 	N\left( e^{-\frac{n}{2}s}v \right) \right\|_{H^{0,m}}^2
	&\le
	C \int_{\mathbb{R}^n}
	e^{2\left(\frac{n}{2}+1\right)s}
	\langle y \rangle^{2m} \left| e^{-\frac{n}{2}s}v(s,y) \right|^{2p} dy \\
	&\le
	C e^{- 2 \lambda_1s} \int_{\mathbb{R}^n}
		\left| \langle y \rangle^{m/p} v(s,y) \right|^{2p} dy\\
	&\le 
	C e^{- 2 \lambda_1s}
	\left\| \nabla \left( \langle y \rangle^{m/p} v(s) \right) \right\|_{L^2}^{2p\sigma}
	\left\| \langle y \rangle^{m/p} v(s) \right\|_{L^2}^{2p(1-\sigma)} \\
	&\le
	C e^{- 2 \lambda_1s}
	\| v(s) \|_{H^{1,m}}^{2p}\\
	&\le
	C e^{-2 \lambda_1s} \left( \| f(s) \|_{H^{1,m}} + \alpha(s) \right)^{2p},
\end{align*}
which completes the proof.
\end{proof}

From Lemmas \ref{lem_b0b1}, \ref{lem_nl} and the assumption \eqref{c},
we immediately obtain the following estimate:

\begin{lemma}\label{lem_r}
Let $r$ be defined by \eqref{r}.
Under the assumptions \eqref{b0}--\eqref{N2}, we have
\begin{align*}
	\|  r(s) \|_{H^{0,m}}^2
	&\le
	C e^{-2\lambda_0 s}
		\left( \| f(s) \|_{H^{1,m}}^2 + \| g(s) \|_{H^{0,m}}^2 + \alpha(s)^2 + \left( \frac{d\alpha}{ds}(s) \right)^2
		\right)\\
	&\quad + C e^{- 2 \lambda_1 s}
		\sum_{i=1}^k
			\left( \|f(s)\|_{H^{1,1}} + \alpha(s) \right)^{2(p_{i1}+p_{i2})}
			\left( \|g(s)\|_{H^{0,1}} + \alpha(s) + \frac{d\alpha}{ds}(s) \right)^{2p_{i3}}
\end{align*}
for $n=1$, $s\ge 0$ and
\begin{align*}
	\|  r(s) \|_{H^{0,m}}^2
	&\le
	C e^{-2\lambda_0 s}
		\left( \| f(s) \|_{H^{1,m}}^2 + \| g(s) \|_{H^{0,m}}^2 + \alpha(s)^2
			+ \left( \frac{d\alpha}{ds}(s) \right)^2
		\right)\\
	&\quad +C e^{- 2 \lambda_1 s}
		\left( \|f(s)\|_{H^{1,m}} + \alpha(s)  \right)^{2p}
\end{align*}
for $n\ge 2$, $s\ge 0$, where
$\lambda_0, \lambda_1$ are defined by \eqref{lambda_0}, \eqref{lambda_1}, respectively.
\end{lemma}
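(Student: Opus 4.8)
The plan is to decompose $r(s,y)$ from \eqref{r} into its four summands
\[
r_1 = \frac{1}{b(t(s))^2}\frac{db}{dt}(t(s))\,w,\qquad
r_2 = e^{s/2}c(t(s))\cdot\nabla_y v,\qquad
r_3 = e^{s}d(t(s))\,v,
\]
and $r_4 := e^{(n+2)s/2}N(e^{-ns/2}v,\, e^{-(n+1)s/2}\nabla_y v,\, b(t(s))^{-1}e^{-(n+2)s/2}w)$, and to bound each $\|r_j(s)\|_{H^{0,m}}^2$ separately, using $\|r\|_{H^{0,m}}^2 \le 4\sum_{j=1}^{4}\|r_j\|_{H^{0,m}}^2$.

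For the linear terms $r_1,r_2,r_3$ the strategy is uniform: pull out the scalar coefficient, bound it by an exponentially decaying function of $s$, and then control the remaining norm of $v$ or $w$ by the quantities appearing on the right-hand side. For the latter step, the spectral decomposition \eqref{sp_de_vw} together with the fact that $\varphi_0$ and $\psi_0$ are Schwartz functions gives $\|v(s)\|_{H^{1,m}} \le C(\|f(s)\|_{H^{1,m}} + |\alpha(s)|)$ and $\|w(s)\|_{H^{0,m}} \le C(\|g(s)\|_{H^{0,m}} + |\alpha(s)| + |\frac{d\alpha}{ds}(s)|)$. For the coefficients, one uses $1+t(s)\sim e^{s/(1+\beta)}$ (established in the proof of Lemma \ref{lem_b0b1}, with the exponent read as $+\infty$ when $\beta=-1$) together with \eqref{b0} and \eqref{c}, so that the coefficients of $r_1$, $r_2$, $r_3$ are bounded, respectively, by $Ce^{-\frac{1-\beta}{1+\beta}s}$, $Ce^{-(\frac{\gamma}{1+\beta}-\frac{1}{2})s}$ and $Ce^{-(\frac{\nu}{1+\beta}-1)s}$; by the definition \eqref{lambda_0} of $\lambda_0$, each of these exponents is $\ge\lambda_0$, so for $s\ge 0$ all three coefficients are $\le Ce^{-\lambda_0 s}$. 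Putting the two steps together yields $\|r_1\|_{H^{0,m}}^2 + \|r_2\|_{H^{0,m}}^2 + \|r_3\|_{H^{0,m}}^2 \le Ce^{-2\lambda_0 s}(\|f(s)\|_{H^{1,m}}^2 + \|g(s)\|_{H^{0,m}}^2 + \alpha(s)^2 + (\frac{d\alpha}{ds}(s))^2)$, which is exactly the linear part of the asserted bound.

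For the nonlinear term $r_4$ I would simply invoke Lemma \ref{lem_nl}: when $n=1$, $r_4 = e^{3s/2}N(e^{-s/2}v,\, e^{-s}v_y,\, b(t(s))^{-1}e^{-3s/2}w)$, so \eqref{est_nl1} supplies the $e^{-2\lambda_1 s}$ estimate with the stated product structure; when $n\ge 2$, $r_4 = e^{(n/2+1)s}N(e^{-ns/2}v)$ and \eqref{est_nl2} applies. Adding the four estimates and recalling that $H^{0,m}=H^{0,1}$ when $n=1$ gives the lemma. I do not expect a genuine obstacle here, since the statement follows essentially at once from the preceding lemmas; the only points needing care are the arithmetic of exponents — verifying that the decay rates produced for $r_1,r_2,r_3$ all dominate $\lambda_0$, which is built into \eqref{lambda_0} — and the uniform handling of the degenerate case $\beta=-1$, where Lemma \ref{lem_b0b1}(ii) shows that these three terms in fact decay doubly exponentially in $s$.
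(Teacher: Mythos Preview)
Your proposal is correct and follows essentially the same approach as the paper: split $r$ into its four summands, invoke Lemma~\ref{lem_nl} for the nonlinear part, and for the three linear parts pull out the scalar coefficients, bound them via Lemma~\ref{lem_b0b1} and assumption~\eqref{c} (treating $\beta=-1$ separately), and pass from $v,w$ to $f,g,\alpha,\dot\alpha$ via the spectral decomposition~\eqref{sp_de_vw}. The paper's proof is slightly terser but identical in substance.
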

\begin{proof}
By Lemma \ref{lem_nl}, it suffices to estimate
\[
	\frac{1}{b(t(s))^2}\frac{db}{dt}(t(s)) w +e^{s/2}c(t(s))\cdot\nabla_yv+e^{s}d(t(s))v.
\]
Applying Lemma \ref{lem_b0b1}, we have
\begin{align*}
	\left\| \frac{1}{b(t(s))^2} \frac{db}{dt}(t(s)) w(s) \right\|_{H^{0,m}}^2
	&\le
	C \left( \| g(s) \|_{H^{0,m}}^2 + \alpha(s)^2 + \left( \frac{d\alpha}{ds}(s) \right)^2 \right)
	\times 
	\begin{cases}
	e^{-2(1-\beta) s/(1+\beta)}& \beta \in (-1,1),\\
	\exp(-4e^{s}) &\beta = -1.
	\end{cases}
\end{align*}
Also, the assumption \eqref{c} implies
\begin{align*}
	\left\| e^{s/2}c(t(s))\cdot \nabla_y v(s) \right\|_{H^{0,m}}^2
	&\le
		C \left( \| f(s) \|_{H^{1,1}}^2 + \alpha(s)^2 \right) \times
	\begin{cases}
	e^{- \left( (2\gamma)/(1+\beta) - 1 \right)s}&\beta\in (-1,1),\\
	\exp\left( -2\gamma e^{s} + s \right)& \beta = -1
	\end{cases}
\end{align*}
and
\begin{align*}
	\left\| e^s d(t(s))v(s) \right\|_{H^{0,m}}^2
	&\le
	C \left( \| f(s) \|_{H^{1,1}}^2 + \alpha(s)^2 \right) \times
	\begin{cases}
	e^{-\left( 2\nu/(1+\beta) - 2 \right)}&\beta \in (-1,1),\\
	\exp \left( -2\nu e^{s} + 2s \right) &\beta =-1.
	\end{cases}
\end{align*}
Summing up the above estimates and \eqref{est_nl1}, \eqref{est_nl2},
we obtain the desired estimate.
\end{proof}

Next, we estimate the term $h$ given by \eqref{h}.
By Lemmas \ref{lem_b0b1} and \ref{lem_r}, we can easily have the
following estimate:
\begin{lemma}\label{lem_h}
Let $h$ be defined by \eqref{h}.
Under the assumption \eqref{b0}--\eqref{N2}, we have
\begin{align*}
	\| h(s) \|_{H^{0,m}}^2
	&\le
	C e^{-2\lambda_0 s}
		\left( \| f(s) \|_{H^{1,m}}^2 + \| g(s) \|_{H^{0,m}}^2 + \alpha(s)^2 + \left( \frac{d\alpha}{ds}(s) \right)^2
		\right)\\
	&\quad + C e^{- 2 \lambda_1 s}
		\sum_{i=1}^k
			\left( \|f(s)\|_{H^{1,1}} + \alpha(s) \right)^{2(p_{i1}+p_{i2})}
			\left( \|g(s)\|_{H^{0,1}} + \alpha(s) + \frac{d\alpha}{ds}(s) \right)^{2p_{i3}}
\end{align*}
for $n=1$, $s\ge 0$ and
\begin{align*}
	\|  h(s) \|_{H^{0,m}}^2
	&\le
	C e^{-2\lambda_0 s}
		\left( \| f(s) \|_{H^{1,m}}^2 + \| g(s) \|_{H^{0,m}}^2
				+ \alpha(s)^2 + \left( \frac{d\alpha}{ds}(s) \right)^2
		\right)\\
	&\quad +C e^{- 2 \lambda_1 s}
		\left( \|f(s)\|_{H^{1,m}} + \alpha(s)  \right)^{2p}
\end{align*}
for $n\ge 2$, $s\ge 0$,
where
$\lambda_0, \lambda_1$ are defined by \eqref{lambda_0}, \eqref{lambda_1}, respectively.
\end{lemma}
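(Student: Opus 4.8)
The plan is to estimate $h(s)$ in $H^{0,m}(\mathbb{R}^n)$ by splitting the definition \eqref{h} into the two groups of terms it contains and bounding each separately, and then to invoke Lemma \ref{lem_r} at the very end.

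First I would handle the terms carrying the explicit factor $e^{-s}/b(t(s))^2$, namely $-2\frac{d\alpha}{ds}(s)\psi_0(y)$ and $\alpha(s)\bigl(\frac{y}{2}\cdot\nabla_y\psi_0(y)+(\frac{n}{2}+1)\psi_0(y)\bigr)$. Since $\varphi_0$ is a Gaussian, both $\psi_0=\Delta\varphi_0$ and $\frac{y}{2}\cdot\nabla_y\psi_0+(\frac{n}{2}+1)\psi_0$ are Schwartz functions, so their $H^{0,m}$-norms are finite constants depending only on $n$ and $m$. By Lemma \ref{lem_b0b1}, $e^{-s}/b(t(s))^2\le Ce^{-(1-\beta)s/(1+\beta)}$ when $\beta\in(-1,1)$, and this quantity decays even faster when $\beta=-1$; since $\lambda_0\le (1-\beta)/(1+\beta)$ by \eqref{lambda_0}, the factor is bounded by $Ce^{-\lambda_0 s}$. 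Squaring, these two contributions are dominated by $Ce^{-2\lambda_0 s}\bigl(\alpha(s)^2+(\frac{d\alpha}{ds}(s))^2\bigr)$, which is already of the form appearing on the right-hand side of the claimed estimates.

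Next I would treat $r(s,y)-\bigl(\int_{\mathbb{R}^n}r(s,y)\,dy\bigr)\varphi_0(y)$. Because $m>n/2$ the weight $\langle y\rangle^{-m}$ lies in $L^2(\mathbb{R}^n)$, so Cauchy--Schwarz gives $\bigl|\int_{\mathbb{R}^n}r(s,y)\,dy\bigr|\le C\|r(s)\|_{H^{0,m}}$, whence $\bigl\|\bigl(\int r\bigr)\varphi_0\bigr\|_{H^{0,m}}\le C\|\varphi_0\|_{H^{0,m}}\|r(s)\|_{H^{0,m}}\le C\|r(s)\|_{H^{0,m}}$. Combining this with the trivial bound for $r(s)$ itself, I obtain $\|h(s)\|_{H^{0,m}}^2\le Ce^{-2\lambda_0 s}\bigl(\alpha(s)^2+(\frac{d\alpha}{ds}(s))^2\bigr)+C\|r(s)\|_{H^{0,m}}^2$. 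Applying Lemma \ref{lem_r} to the last term and absorbing the $\alpha(s)^2+(\frac{d\alpha}{ds}(s))^2$ contribution into the $e^{-2\lambda_0 s}(\cdots)$ term on the right yields the two asserted estimates for $n=1$ and $n\ge2$ respectively.

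There is no genuine obstacle here: the proof is essentially bookkeeping. The only points needing (minor) attention are recognizing that the auxiliary profiles $\psi_0$ and $\frac{y}{2}\cdot\nabla_y\psi_0+(\frac{n}{2}+1)\psi_0$ are Schwartz and hence have finite weighted $L^2$-norms, that the coefficient $e^{-s}/b(t(s))^2$ decays at least like $e^{-\lambda_0 s}$ by the first entry of the minimum in \eqref{lambda_0}, and that the zeroth Fourier mode of $r$ is controlled by $\|r\|_{H^{0,m}}$ thanks to $m>n/2$; everything else is a direct citation of Lemmas \ref{lem_b0b1} and \ref{lem_r}.
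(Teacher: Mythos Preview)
Your proposal is correct and follows essentially the same approach as the paper's own proof: both split $h$ into the $\psi_0$-terms (handled via the Schwartz regularity of the profiles and the decay of $e^{-s}/b(t(s))^2$ from Lemma~\ref{lem_b0b1}) and the $r$-terms (handled via the bound $\bigl|\int r\,dy\bigr|\le C\|r\|_{H^{0,m}}$ from $m>n/2$ together with Lemma~\ref{lem_r}). The only difference is that you spell out a little more explicitly why $e^{-s}/b(t(s))^2\le Ce^{-\lambda_0 s}$, which the paper leaves implicit.
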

\begin{proof}
We easily estimate
\begin{align*}
	&\left\| \frac{e^{-s}}{b(t(s))^2}
		\left( -2 \frac{d\alpha}{ds}(s) \psi_0(y)
		+\alpha(s)\left(\frac{y}{2}\cdot\nabla_y\psi_0(y)
			+\left(\frac{n}{2}+1\right)\psi_0(y) \right) \right)
	\right\|_{H^{0,m}}^2 \\
	&\quad \le
	C e^{-2\lambda_0 s}
		\left( \alpha(s)^2 + \left( \frac{d\alpha}{ds}(s) \right)^2
		\right).
\end{align*}
For the term
$r(s)$, we apply Lemma \ref{lem_r}.
Finally, for the term
$(\int_{\mathbb{R}^n} r(s,y)\,dy) \varphi_0(y)$,
we note that
\begin{align}
\label{intr}
	 \left| \int_{\mathbb{R}^n} r(s,y) dy \right|
	\le C \| r(s) \|_{H^{0,m}},
\end{align}
holds due to $m> n/2$.
Thus, we apply Lemma \ref{lem_r} again to obtain the conclusion.
\end{proof}

Moreover, combining \eqref{h_int} and the Hardy-type inequalities \eqref{hardy}, \eqref{hardy2},
we also have
\begin{lemma}\label{lem_caph}
Let $H$ be defined by \eqref{H}.
Under the assumption \eqref{b0}--\eqref{N2}, we have
\begin{align*}
	\| H(s) \|_{H^{0,m}}^2
	&\le
	C e^{-2\lambda_0 s}
		\left( \| f(s) \|_{H^{1,m}}^2 + \| g(s) \|_{H^{0,m}}^2
			+ \alpha(s)^2 + \left( \frac{d\alpha}{ds}(s) \right)^2
		\right)\\
	&\quad + C e^{- 2 \lambda_1 s}
		\sum_{i=1}^k
			\left( \|f(s)\|_{H^{1,1}} + \alpha(s) \right)^{2(p_{i1}+p_{i2})}
			\left( \|g(s)\|_{H^{0,1}} + \alpha(s) + \frac{d\alpha}{ds}(s) \right)^{2p_{i3}}
\end{align*}
for $n=1$, $s\ge 0$ and
\begin{align*}
	\|  H(s) \|_{H^{0,m}}^2
	&\le
	C e^{-2\lambda_0 s}
		\left( \| f(s) \|_{H^{1,m}}^2 + \| g(s) \|_{H^{0,m}}^2
				+ \alpha(s)^2 + \left( \frac{d\alpha}{ds}(s) \right)^2
		\right)\\
	&\quad +C e^{- 2 \lambda_1 s}
		\left( \|f(s)\|_{H^{1,m}} + \alpha(s)  \right)^{2p}
\end{align*}
for $n\ge 2$, $s\ge 0$,
where
$\lambda_0, \lambda_1$ are defined by \eqref{lambda_0}, \eqref{lambda_1}, respectively.\end{lemma}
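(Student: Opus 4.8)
The plan is to obtain this lemma as an essentially immediate corollary of Lemma \ref{lem_h} combined with the Hardy-type inequalities \eqref{hardy} and \eqref{hardy2}. The one extra ingredient needed is the cancellation $\int_{\mathbb{R}^n} h(s,y)\,dy = 0$, which is precisely \eqref{h_int} (a consequence of \eqref{fg_int} together with $\int_{\mathbb{R}^n}\varphi_0 = 1$). This mean-zero property is exactly what makes the primitive/fractional integral $H$ a bona fide $L^2$-function and what the Hardy inequalities turn into quantitative control of $\|H(s)\|$ by $\|h(s)\|_{H^{0,m}}$. So the structure of the argument is: (1) invoke \eqref{h_int}; (2) apply the appropriate Hardy-type inequality to bound $\|H(s)\|$ by $\|h(s)\|_{H^{0,m}}$; (3) estimate $\|h(s)\|_{H^{0,m}}^2$ by Lemma \ref{lem_h}.

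Concretely, for $n=1$ (so $m=1$) one has $H(s,y)=\int_{-\infty}^y h(s,z)\,dz$; from the explicit form \eqref{h} of $h$, the regularity \eqref{fgcls2} of $(f,g)$, and Lemmas \ref{lem_b0b1}, \ref{lem_r} one checks $h(s,\cdot)\in H^{0,1}(\mathbb{R})$ with $\int_{\mathbb{R}}h(s,y)\,dy=0$, so Lemma \ref{lem_hardy} applies with $f$ replaced by $h$ and gives
\[
	\|H(s)\|_{L^2}^2 \le 4\int_{\mathbb{R}} y^2 h(s,y)^2\,dy \le 4\|h(s)\|_{H^{0,1}}^2 = 4\|h(s)\|_{H^{0,m}}^2 .
\]
For $n\ge 2$, $H$ denotes the fractional integral with $\hat{H}(s,\xi)=|\xi|^{-n/2-\delta}\hat{h}(s,\xi)$ for the same $\delta\in(0,1)$ used in the energy estimates; since $m>n/2+1$ by Assumption (i) and $\hat{h}(s,0)=(2\pi)^{-n/2}\int_{\mathbb{R}^n}h(s,y)\,dy=0$ by \eqref{h_int}, Lemma \ref{lem_hardy2} applies with $f$ replaced by $h$ and yields $\|H(s)\|_{L^2}\le C(n,m,\delta)\|h(s)\|_{H^{0,m}}$. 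In either case, substituting the bound for $\|h(s)\|_{H^{0,m}}^2$ from Lemma \ref{lem_h} produces precisely the right-hand sides claimed, since those right-hand sides are literally the ones appearing in Lemma \ref{lem_h}.

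I do not expect a genuine obstacle here: the analytic content was already spent in Lemmas \ref{lem_r} and \ref{lem_h}, and what remains is bookkeeping. The two points that deserve a moment's care are (i) verifying that $h(s,\cdot)$ really lies in $H^{0,m}(\mathbb{R}^n)$ so that the Hardy inequalities — and, in the one-dimensional case, the integration by parts underlying \eqref{hardy} — are legitimate; this follows from \eqref{h}, \eqref{fgcls2}, Lemma \ref{lem_b0b1} and the finiteness already established in Lemma \ref{lem_h}; and (ii) noting that \eqref{hardy} directly controls only $\|H(s)\|_{L^2}$ in the case $n=1$, which is nevertheless exactly the quantity entering the energy identities, since there $H$ appears solely through the $L^2$-pairing $\int_{\mathbb{R}}(F+G)H\,dy$ in $R_0$ (and analogously via $\mathrm{Re}\int_{\mathbb{R}^n}(\hat F+\hat G)\bar{\hat H}\,d\xi$ when $n\ge 2$). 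With these checks in place the estimate follows immediately.
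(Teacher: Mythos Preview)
Your proposal is correct and follows exactly the same route as the paper: the paper's entire proof is the one-line remark that the lemma follows by ``combining \eqref{h_int} and the Hardy-type inequalities \eqref{hardy}, \eqref{hardy2}'' with Lemma~\ref{lem_h}. Your observation that the Hardy inequalities in fact only control $\|H(s)\|_{L^2}$ rather than the full $\|H(s)\|_{H^{0,m}}$, and that this is all that is actually used downstream (in the pairings $\int (F+G)H\,dy$ and ${\rm Re}\int(\hat F+\hat G)\bar{\hat H}\,d\xi$), is a valid clarification of a minor imprecision in the statement.
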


Now we are at the position to prove Lemma \ref{lem_rema}.
\begin{proof}[Proof of Lemma \ref{lem_rema}]
We first prove the estimate for $R_4(s)$.
Let $\tilde{\eta}>0$ be an arbitrary small number.
Then, by the Schwarz inequality
and Lemmas \ref{lem_l4} and \ref{lem_compatible},
there exists $s_3 \ge s_2$ such that
the terms not including the nonlinearity are easily bounded by
$\tilde{\eta} L_4(s) + C(\tilde{\eta}) e^{-2\lambda_0 s} E_5(s)$
for
$s \ge s_3$.
The terms including the nonlinearity consist of the following
three terms:
\begin{align*}
	\int_{\mathbb{R}^n} (F+G)H\, dy,\quad
	\int_{\mathbb{R}^n} (1+|y|^{2m}) (f+g)h\,dy,\quad
	\left( \int_{\mathbb{R}^n} r(s,y)\,dy \right) \frac{d\alpha}{ds}(s).
\end{align*}
By the Schwarz inequality and Lemmas
\ref{lem_hardy}, \ref{lem_hardy2}, \ref{lem_l4}, \ref{lem_compatible}, \ref{lem_h}, \ref{lem_caph},
there exists $s_4 \ge s_2$ such that
the first two terms are easily bounded by
\begin{align*}
	\tilde{\eta} L_4(s) + C(\tilde{\eta}) e^{-2\lambda_0 s} E_5(s)
	+ \begin{cases}
		\displaystyle C e^{- 2 \lambda_1 s}
		\sum_{i=1}^k
			E_5(s)^{p_{i1}+p_{i2}}
			\left( E_5(s)^{p_{i3}} + L_4(s)^{p_{i3}} \right)&(n=1),\\
		C e^{- 2 \lambda_1 s}
		\left( \|f(s)\|_{H^{1,m}} + \alpha(s)  \right)^{2p}&(n \ge 2)
	\end{cases}
\end{align*}
for $s\ge s_4$.
For the third term, we apply the Schwarz inequality to obtain
\[
	\left| \left( \int_{\mathbb{R}^n} r(s,y)\,dy \right) \frac{d\alpha}{ds}(s) \right|
	\le \tilde{\eta} \left( \frac{d\alpha}{ds}(s) \right)^2
		+ C(\tilde{\eta}) \left( \int_{\mathbb{R}^n} r(s,y)\,dy \right)^2.
\]
Noting \eqref{intr} and
applying Lemma \ref{lem_r}, and then Lemmas \ref{lem_l4} and \ref{lem_compatible},
we have the desired estimate.

Finally, we prove the estimate for $R_5(s)$.
Let $\tilde{\eta}>0$ be an arbitrary small number.
Recall that  $R_5(s) = R_4(s) + \tilde{R}_5(s)$ with
\[
	\tilde{R}_5(s) = \frac{e^{-s}}{b(t(s))^2}\left( \frac{d\alpha}{ds}(s) \right)^2
		- \frac{2}{b(t(s))^2}\alpha(s) \frac{db}{dt}(t(s)) \frac{d\alpha}{ds}(s)
		+\alpha(s) \left( \int_{\mathbb{R}^n} r(s,y)\,dy \right).
\]
We have already estimated $R_4(s)$ and hence, it suffices to estimate $\tilde{R}_5(s)$.
By Lemmas \ref{lem_l4} and \ref{lem_compatible},
there exists $s_5 \ge s_2$ such that
the first two terms are easily estimated by
$\tilde{\eta} L_4(s) + C(\tilde{\eta}) e^{-2\lambda_0 s} E_5(s)$
for
$s \ge s_5$.
Moreover, by \eqref{intr},
$\alpha(s) \le C E_5(s)^{1/2}$ and Lemma \ref{lem_r}, 
there exists $s_6 \ge s_2$ such that the third term is estimated as
\begin{align*}
	\alpha(s) \left| \int_{\mathbb{R}^n} r(s,y)dy \right|
	&\le \alpha(s) \| r(s) \|_{H^{0,m}} \\
	&\le \tilde{\eta} L_4(s) + C e^{-\lambda_0 s} E_5(s) \\
	&\quad + \begin{cases}
		\displaystyle C e^{-\lambda_1 s}
			\sum_{i=1}^k
			E_5(s)^{(p_{i1}+p_{i2}+1)/2}
			\left( E_5(s)^{p_{i3}/2} +  L_4(s)^{p_{i3}/2} \right) &(n=1),\\
		C e^{-\lambda_1 s}
			E_5(s)^{(p+1)/2} &(n\ge 2)
	\end{cases}
\end{align*}
for
$s \ge s_6$.
When $n=1$,
we further apply the Schwarz inequality to
the terms in the sum corresponding to $p_{i3} =1$
and obtain
\[
	e^{-\lambda_1 s}E_5(s)^{(p_{i1}+p_{i2}+1)/2} L_4(s)^{p_{i3}/2}
	\le \tilde{\eta} L_4(s)
		+ C(\tilde{\eta}) e^{-2\lambda_1s} E_5(s)^{(p_{i1}+p_{i2}+p_{i3})}.
\]
Finally, letting
$s_0 := \max\{ s_3, s_4, s_5, s_6 \}$
and combining the above estimates,
we have the conclusion.
\end{proof}

\section*{Acknowledgements}
The author is deeply grateful to Professor Yoshiyuki Kagei
for useful suggestions and constructive comments.
He appreciates helpful comments and generous support by
Professor \mbox{Mitsuru} \mbox{Sugimoto}.
He would also like to express his
gratitude to an anonymous referee
for careful reading of the manuscript and
many useful suggestions and comments. 
This work is supported by
Grant-in-Aid for JSPS Fellows 15J01600
of Japan Society for the Promotion of Science.


\end{document}